\newtheorem{teo}{teorema}[section]
\newtheorem{theorem}[teo]{Theorem}
\newtheorem{corollary}[teo]{Corollary}
\newtheorem{lema}[teo]{Lemma}
\newtheorem{lemma}[teo]{Lemma}
\newtheorem{proposition}[teo]{Proposition}
\theoremstyle{definition}
\newtheorem{definition}[teo]{Definition}
\newtheorem{example}[teo]{Example}
\theoremstyle{remark}
\newtheorem{remark}{Remark}
\numberwithin{figure}{section}%figure numbering
\newcommand{\PP}{\mathrm{P}}
\newcommand{\scat}{\mathop{\mathrm{scat}}}
\newcommand{\TC}{\mathop{\mathrm{TC}}}
\newcommand{\ctg}{\sim_c}   %contigua
\newcommand{\ctgcl}{\sim}   %clase de contiguidad
\newcommand{\inv}{^{-1}}	%inverso
\newcommand{\simc}{\sim_c}	%contiguo
\newcommand{\Sg}{\mathrm{Sg}}	%svarc genus
\newcommand{\hSg}{\mathrm{hSg}}
\newcommand{\ev}{\mathrm{ev}}
\newcommand{\pr}{\mathrm{pr}}
\newcommand{\eqdef}{\mathrel{\mathop:}=}
\newcommand{\cat}{\mathrm{cat}}
\title[Simplicial fibrations]{Simplicial fibrations}
\thanks{The first and the fourh authors were partially supported by
MINECO Spain Research Project MTM2015--65397--P and Junta de
Andaluc\'ia Research Groups FQM--326 and FQM--189. The second and
third authors were partially supported by MINECO-FEDER research
project MTM2016--78647--P}
\author[D.~Fern\'andez-Ternero et al.]{D.~Fern\'andez-Ternero}
\address[D. Fern\'{a}ndez-Ternero, J.A. Vilches]{\newline
\indent Departamento de Geometr\'ia y Topolog\'ia, Universidad de Sevilla, Spain}
\email{desamfer@us.es}
\email{vilches@us.es}
\author[]{J.M. Garc\'{\i}a Calcines}
\address[J.M. Garc\'{\i}a Calcines]{\newline
\indent Dpto. de Matem\'aticas, Estad\'{\i}stica e Investigaci\'on Operativa. Universidad de La Laguna, Spain}
\email{jmgarcal@ull.es}
\author[]{E.~Mac\'{\i}as-Virg\'os}
\address[E. Mac\'{i}as-Virg\'os] {\newline \indent Institute of Mathematics, University of Santiago de Compostela, Spain.}
\email{quique.macias@usc.es}
\author[]{J.A.~Vilches}
\date{\today}
\begin{document}
\maketitle

\begin{abstract}
We undertake a systematic study of the notion of fibration in the
setting of abstract simplicial complexes, where the concept of
``homotopy'' has been replaced by that of ``contiguity''. Then a
fibration will be a simplicial map satisfying the ``contiguity
lifting property''. This definition turns out to be equivalent to
a known notion introduced by G. Minian, established in terms of a
cylinder construction $K \times I_m$. This allows us to prove
several properties of simplicial fibrations which are analogous to
the classical ones in the topological setting, for instance: all
the fibers of a fibration have the same strong homotopy type, a
notion that has been recently introduced by Barmak and Minian; any
fibration with a strongly collapsible base is fibrewise trivial;
and some other ones. We introduce the concept of ``simplicial
finite-fibration'', that is, a map which has the contiguity
lifting property only for finite complexes. Then, we prove that
the path fibration $PK \to K\times K$ is a finite-fibration, where
PK is the space of Moore paths introduced by M. Grandis. This
important result allows us to prove that any simplicial map
factors through a finite-fibration, up to a P-homotopy
equivalence. Moreover, we introduce a definition of ``\v{S}varc
genus'' of a simplicial map and, and using the properties stated
before, we are able to compare the \v{S}varc genus of path
fibrations with the notions of simplicial LS-category and
simplicial topological complexity introduced by the authors in
several previous papers. Finally, another key result is a
simplicial version of a Varadarajan result for fibrations,
relating the LS-category of the total space, the base and the
generic fiber.
\end{abstract}

\setcounter{tocdepth}{1}

\tableofcontents

 %!TEX root = main.tex

 \section{Introduction}

In recent years there has been a renovated interest in abstract
simplicial complexes, as a setting which is well suited for
discretizing topological invariants and for designing computer
algorithms. Under this point of view and boosted by the
increasing computer capacities, several classical theories have
been developed, thus providing new powerful tools
like persistent homology \cite{PERSISTENT} or discrete Morse
theory \cite{DISCRETEMORSE}, which are being applied in robotics,
neural networks or big data mining.

However, there is a lack of development of other ideas in
this new field of ``applied algebraic topology'', like
Lusternik-Schnirelmann category or topological complexity, which
classically needed the use of notions such as homotopy, fibrations
or cofibrations.

In the framework of abstract simplicial complexes, the classical
notion of ``contiguity'' between simplicial maps \cite{S}  plays
the role that ``homotopy'' plays in the context of topological
spaces. This notion has received new attention in the last years
after the work of J. Barmak and G. Minian \cite{B-M}. They showed
that the equivalence under contiguity classes is the same as the
equivalence by ``strong collapses'', a highly interesting idea
which is related on one hand with the classical Whitehead
collapses, and on the other hand with the theory of posets and
finite topological spaces \cite{LIBROBARMAK}.

Using the ideas above, several of the authors have recently
introduced a notion of LS-category in the simplicial setting,
which generalizes the well known notion of ``arboricity'' in graph
theory \cite{F-M-V, F-M-M-V1}. Moreover,  we also introduced a
notion of topological complexity,  defined in purely combinatorial
terms \cite{F-M-M-V2}. Both invariants have similar properties to
the classical ones and also new results arise.

As a collateral result, cofibrations were studied in
\cite{F-M-M-V1}, but a systematic study of the notion of
simplicial fibration was lacking. This study is the aim of the
present paper.

The contents are as follows:

In Section \ref{S2} we recall the basic notions of simplicial
complexes  and contiguity classes.

In Section \ref{S3} we introduce two possible definitions of {\em
simplicial fibration} in terms of a {\em contiguity lifting
property} and we show that in fact they are equivalent to a third
one introduced by Minian in \cite{M2005}: a simplicial map
$p\colon E \to B$ is a simplicial fibration if for any simplicial
map $H\colon K \times I_m \to B$ and any simplicial map $f:K\times
\{0\} \to E$, there is a simplicial map $\tilde H\colon K \times
I_m \to E$ such that $p\circ \tilde H=H$ and $\tilde H \circ
i_0^m= f$ (see Definition \ref{DEF3}). There is a more general
notion of {\em simplicial finite-fibration} if we limit the
lifting property to {\em finite} complexes $K$ in the definition
above. It will be necessary to obtain several key results along the paper.

%Most results that we state for general simplicial
%fibrations have its analogous version for the finite ones.

In Section \ref{S4} we give several basic examples and
constructions, including products and pullbacks of simplicial
fibrations. Then, we introduce (Section \ref{S5}) the notion of
{\em Moore path} and the space $\PP K$ of Moore paths on a
simplicial complex $K$. This notion has been developed in \cite{G}
by M. Grandis. The main result of this section is that the path map $\PP K \to
K\times K$ is a simplicial finite-fibration (Theorem
\ref{MAINPATH}).

In Section \ref{GENFIBER} we prove the important result that all
the fibers of a simplicial fibration have the same strong homotopy
type. In the same line, we adapt another classical result by
showing (Section \ref{S7}) that a simplicial fibration with a
collapsible base is trivial. Here, ``collapsible'' means that
there is a finite sequence of strong collapses and expansions
transforming the base onto a point. Our next result (Section
\ref{S8}) is a simplicial version of Varadarajan's theorem (see
Theorem \ref{Varadarajan}) relating the LS-category of the total
space, the base and the generic fiber of a fibration.

In the last sections of the paper we introduce several new ideas.
The first one is based on a notion of ``P-homotopy'' modelled on
Moore paths (see Definition \ref{P-homotop}), which allows us to
prove the $P$-equivalence of the complexes $K$ and $\PP K$ and to
give a general result  about the factorization of any map into a
P-equivalence and a finite-fibration (Section \ref{FACTORIZ}).

On the other hand, the $P$-homotopy equals
the usual contiguity property for finite complexes. This allows us to define in Section
\ref{SVARCGENUS} a general notion of \v{S}varc genus for
simplicial maps and to discuss its relationship with the
simplicial LS-category and the discrete topological complexity
introduced in our previous papers \cite{F-M-V, F-M-M-V1,F-M-M-V2}.

%\subsection{Acknowledgements}

%!TEX root = main.tex
\section{Simplicial complexes and contiguity}\label{S2}

We start by briefly recalling the notions of simplicial complex
and contiguity. We are assuming that the reader
is familiarized with these notions as well as others that will be
appearing throughout the paper (see, for instance, \cite{K,S} for
more details on this topic).

\begin{definition}
An {\em (abstract) simplicial complex} is a set $V$ together with
a collection $K$ of finite subsets of $V$ such that if $\sigma\in
K$ and $\tau\subseteq \sigma$ then $\tau\in K$.
\end{definition}
Notice that $K$ is not necessarily finite in the above definition.
As usual, $K$ will denote the simplicial complex and $V(K)$ the
corresponding vertex set.

\begin{definition}
Given two simplicial complexes $K$ and $L$, a {\em simplicial map}
from $K$ to $L$ is a set map $\varphi \colon V(K)\to V(L)$ such
that if $\sigma\in K$ then $\varphi(\sigma)\in L$.
\end{definition}

\begin{definition}
Let $K, L$ be two simplicial complexes.  Two simplicial maps
$\varphi,\psi\colon K\to L$ are {\em contiguous} \cite[p.~130]{S}
if, for any simplex $\sigma\in K$, the set
$\varphi(\sigma)\cup\psi(\sigma)$ is a simplex of $L${;} that is,
if $v_0,\dots,v_k$ are the vertices of $\sigma$ then the vertices
$f(v_0),\dots,f(v_k),g(v_0),\dots,g(v_k)$ span a simplex of $L$.
\end{definition}

This relation, denoted by $\varphi \ctg \psi$, is reflexive and
symmetric, but in general it is not transitive. In order to
overcome this fact we use the notion of contiguity class.

\begin{definition}\label{MSTEPS}
Two simplicial maps $\varphi,\psi\colon K \to L$ are in the same
{\em contiguity class} with $m$ steps, denoted by $\varphi\ctgcl
\psi$, if there is a finite sequence {$$\varphi=\varphi_0 \ctg
\cdots \ctg \varphi_m=\psi$$} of contiguous {simplicial}  maps
$\varphi_i\colon K \to L$, $0\leq i\leq m$.
\end{definition}

It is straightforward to prove the following:

\begin{lema}\label{lemacontig}
Let $f \colon K \to L$ be a simplicial map between the simplicial
complexes $K$ and $L$. If $g\colon V(K) \to V(L)$ is a map such
that $f\simc g$ then $g$ is a simplicial map.
\end{lema}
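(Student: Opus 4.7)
The plan is to exploit the downward-closure axiom of simplicial complexes, namely that any subset of a simplex of $L$ is itself a simplex of $L$. The crucial single-step observation I would establish first is: if $\varphi\colon K\to L$ is simplicial and $\psi\colon V(K)\to V(L)$ is any vertex map satisfying $\varphi(\sigma)\cup\psi(\sigma)\in L$ for every $\sigma\in K$, then $\psi(\sigma)$ is a subset of the simplex $\varphi(\sigma)\cup\psi(\sigma)$, hence $\psi(\sigma)\in L$. Thus being contiguous to a simplicial map automatically promotes a vertex map to a simplicial map.

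With this in hand, I would proceed by induction on the number $m$ of steps in a contiguity chain $f=\varphi_0\ctg\varphi_1\ctg\cdots\ctg\varphi_m=g$ witnessing $f\simc g$. The base case $m=0$ is trivial, and $m=1$ is precisely the single-step observation above. For the inductive step, applying that observation to the pair $(\varphi_0,\varphi_1)$ shows that $\varphi_1$ is simplicial; then the tail $\varphi_1\ctg\cdots\ctg\varphi_m=g$ is a contiguity chain of length $m-1$ starting at a simplicial map, and the inductive hypothesis yields that $g$ is simplicial.

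I do not anticipate any real obstacle, since the argument reduces to applying the downward-closure axiom repeatedly along the chain. The only point worth being explicit about is that the pairwise contiguity condition $\varphi_{i-1}(\sigma)\cup\varphi_i(\sigma)\in L$ cascades the simplicial property from each $\varphi_{i-1}$ to the next $\varphi_i$, so once $f$ is simplicial, every map in the chain, including $g=\varphi_m$, is simplicial as well.
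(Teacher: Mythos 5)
Your proof is correct and is exactly the intended ``straightforward'' argument the paper leaves to the reader: $g(\sigma)\subseteq f(\sigma)\cup g(\sigma)\in L$, so downward closure gives $g(\sigma)\in L$. Note that $\simc$ in the statement denotes one-step contiguity, so the base observation alone already suffices; your induction along a chain is harmless extra generality (and is in fact how the lemma gets used in the proof of Theorem \ref{equiv}).
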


%\begin{proof}
%Taking $\sigma \in K$, since $g\simc f$, it follows
%that $f(\sigma)\cup g(\sigma)\in L$. Taking into account
%that $g(\sigma)\leq f(\sigma)\cup g(\sigma)\in L$ we conclude that $g(\sigma)\in L$.
%\end{proof}

Now we recall a formal notion of combinatorial homotopy
introduced by Minian in \cite{M2005}. First of all we need a
triangulated version of the real interval $[0,n]$.

\begin{definition}
For $n\geq 1$, let $I_n$ be the one-dimensional simplicial complex
whose vertices are the integers $\{0,\dots,n\}$ and the edges are
the pairs $\{j,j+1\}$, for $0\leq j<n$.
\end{definition}

\begin{definition}\cite[Definition 4.25]{K}
Let $K$ and $L$ be two simplicial complexes. The {\em categorical
product} $K\times L$ is the simplicial complex whose set of
vertices is $V(K\times L)=V(K)\times V(L),$ and  whose simplices
are given by the rule: $\sigma\in K\times L$ if and only if
$\pr_1(\sigma)\in K$ and $\pr_2(\sigma)\in L$, where $\pr_1,\pr_2$
are the canonical projections.
\end{definition}

Given two simplicial maps, Minian proved that belonging to the
same contiguity class is equivalent to the existence of a
simplicial homotopy between them, modelled by a simplicial
cylinder.

\begin{proposition}\cite[Prop.~2.16]{M2005}\label{MINIANCONT}
Two simplicial maps $f,g\colon K \to L$ are in the same contiguity
class, with $m$ steps, $f\sim g$, if and only if there exists some $m\geq 1$ and
some simplicial map  $H\colon K \times I_m\to L$ such that
$H(v,0)=f(v)$ and $H(v,m)=g(v)$, for all vertices $v\in K$.
\end{proposition}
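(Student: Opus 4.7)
The plan is to construct, in each direction, the obvious candidate and verify that the categorical-product condition reduces cleanly to the contiguity condition. The key observation I would rely on is that a finite subset $\tau \subseteq V(K) \times V(I_m)$ is a simplex of $K \times I_m$ exactly when $\pr_1(\tau)$ is a simplex of $K$ and $\pr_2(\tau)$ is a simplex of $I_m$, and since $I_m$ is $1$-dimensional, $\pr_2(\tau)$ is forced to be either a single vertex $\{i\}$ or an edge $\{i,i+1\}$.

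For the direct implication, given a contiguity sequence $f=\varphi_0 \ctg \varphi_1 \ctg \cdots \ctg \varphi_m = g$, I would define $H\colon V(K\times I_m)\to V(L)$ on vertices by $H(v,i)\eqdef \varphi_i(v)$ and then check that $H$ is simplicial. Picking any simplex $\tau$ of $K\times I_m$ with first projection $\sigma\in K$, there are two cases: if $\pr_2(\tau)=\{i\}$, then $H(\tau)=\varphi_i(\sigma)$, which lies in $L$ because $\varphi_i$ is simplicial; if $\pr_2(\tau)=\{i,i+1\}$, then $H(\tau)\subseteq \varphi_i(\sigma)\cup \varphi_{i+1}(\sigma)$, which lies in $L$ because $\varphi_i \ctg \varphi_{i+1}$. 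The boundary conditions $H(v,0)=f(v)$ and $H(v,m)=g(v)$ hold by construction.

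For the converse, starting from $H\colon K\times I_m \to L$, I would define $\varphi_i\colon V(K)\to V(L)$ by $\varphi_i(v)\eqdef H(v,i)$ and verify two things. First, each $\varphi_i$ is simplicial: for $\sigma\in K$, the set $\sigma\times\{i\}$ is a simplex of $K\times I_m$ since both projections are simplices, so $H(\sigma\times\{i\})=\varphi_i(\sigma)\in L$. Second, $\varphi_i \ctg \varphi_{i+1}$: for $\sigma\in K$, the set $\sigma\times \{i,i+1\}$ is a simplex of $K\times I_m$ because $\{i,i+1\}$ is an edge of $I_m$, so $\varphi_i(\sigma)\cup\varphi_{i+1}(\sigma)=H(\sigma\times\{i,i+1\})\in L$. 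Stringing the $\varphi_i$ together yields the required $m$-step contiguity.

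I do not expect a serious obstacle here; the whole argument is essentially bookkeeping once one unwinds the categorical product. The only subtle point worth being explicit about is the asymmetric role of the two projections in the definition of $K\times I_m$, which makes the categorical product strictly smaller than a naive ``product of simplices'' construction but still large enough to contain all slabs $\sigma\times\{i,i+1\}$ needed for the contiguity step. Once this is noted, both implications are a direct translation between vertex-wise data and simplex-wise incidence.
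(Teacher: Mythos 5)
Your proposal is correct, and it is exactly the standard argument for this result: the paper itself only cites it from Minian \cite[Prop.~2.16]{M2005}, but the slicing computation you use (that $H(\sigma\times\{i,i+1\})=\varphi_i(\sigma)\cup\varphi_{i+1}(\sigma)$ must be a simplex, thanks to the categorical product forcing $\pr_2(\tau)$ to be a vertex or an edge of $I_m$) is precisely the one the paper reuses later, e.g.\ in the proof of Theorem \ref{equiv}. No gaps; the appeal to downward closure of $L$ when $H(\tau)$ is a proper subset of $\varphi_i(\sigma)\cup\varphi_{i+1}(\sigma)$ is the only implicit step and it is immediate.
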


\begin{remark}
The preceding proposition holds when we consider $K\times I_m$
to be the {\em categorical} product, but notice that the proof
does not work for the more usual notion of simplicial product,
namely the so-called simplicial {\em cartesian} product (see
\cite{K}).
\end{remark}

Barmak and Minian introduced \cite{LIBROBARMAK, B-M} the so-called
strong homotopy type for simplicial complexes. Two simplicial
complexes have the same strong homotopy type, denoted by $K\sim
L$, if they are related by a finite sequence of two kind of
simplicial moves, namely, strong collapses and expansions. An
elementary strong collapse consists of removing the open star
around a dominated vertex, where a vertex $v$ is dominated by
another vertex $v^\prime$ if every maximal simplex that contains
$v$ also contains $v^\prime$. A complex is called strongly
collapsible if it has the same strong homotopy type of a point.

Strong homotopy type is deeply related to the notion of contiguity
between simplicial maps. More precisely, the following result
holds:

\begin{proposition}\cite[Cor.~2.12]{B-M}
Two simplicial complexes $K$ and $L$ have the same strong homotopy
type if and only if there are simplicial maps $\varphi\colon K\to
L$ and $\psi\colon L \to K$ such that $\psi\circ\varphi\sim 1_K$
and $\varphi\circ\psi\sim 1_L$.
\end{proposition}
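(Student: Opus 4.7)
The plan is to prove the two directions separately, with the forward implication being routine and the converse requiring an auxiliary mapping-cylinder construction.

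For the forward direction, it suffices to treat a single elementary strong collapse $K\seco L$, since the general case follows by concatenating elementary moves (and composition respects $\ctgcl$). Suppose $L$ is obtained from $K$ by removing the open star of a vertex $v$ dominated by $v'\in V(K)$. Define $\varphi=r\colon K\to L$ to be the retraction sending $v\mapsto v'$ and fixing every other vertex, and $\psi=\iota\colon L\hookrightarrow K$ the inclusion. Then $r\circ\iota=1_L$ strictly. For any $\sigma\in K$, either $v\notin\sigma$ and $(\iota\circ r)(\sigma)\cup\sigma=\sigma\in K$, or $v\in\sigma$ and $(\iota\circ r)(\sigma)\cup\sigma=\sigma\cup\{v'\}$; in the latter case any maximal simplex of $K$ containing $\sigma$ contains $v$, hence by domination also $v'$, so $\sigma\cup\{v'\}\in K$. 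Thus $\iota\circ r\ctg 1_K$, and in particular $\iota\circ r\ctgcl 1_K$. Elementary expansions are handled by reversing the roles of $K$ and $L$, and arbitrary sequences of moves by composing the resulting maps.

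For the converse, the strategy is to associate to each simplicial map $f\colon K\to L$ a simplicial mapping cylinder $M_f$, containing $K$ and $L$ as full subcomplexes, whose vertex set is $V(K)\sqcup V(L)$ and whose simplices are the finite subsets $\sigma\cup\tau$ with $\sigma\in K$ and $\tau\cup f(\sigma)\in L$. A direct check shows that every vertex $w\in V(K)\subset V(M_f)$ is dominated in $M_f$ by (any of) its image $f(w)\in V(L)$, so that the vertices of $K$ can be removed one by one via elementary strong collapses. Consequently $M_f\seco L$, and in particular $M_f$ and $L$ have the same strong homotopy type; moreover the inclusion $K\hookrightarrow M_f$ realizes $f$ up to strong equivalence.

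The main obstacle is the following contiguity lemma: if $f\ctg g\colon K\to L$ are contiguous, then $M_f$ and $M_g$ have the same strong homotopy type. I would prove this by constructing, from the combined map $(f,g)\colon K\times I_1\to L$ provided by Proposition~\ref{MINIANCONT}, an intermediate cylinder $M_{(f,g)}$ that strongly collapses onto both $M_f$ and $M_g$, using the fact that $f(\sigma)\cup g(\sigma)\in L$ for every $\sigma\in K$ to produce the required dominated vertices. Granted this lemma, a chain $\varphi_0\ctg\varphi_1\ctg\dots\ctg\varphi_m$ gives a zig-zag $M_{\varphi_0}\sim M_{\varphi_1}\sim\cdots\sim M_{\varphi_m}$ of strong homotopy equivalences. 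Applying this to the hypothesis $\psi\circ\varphi\ctgcl 1_K$ and $\varphi\circ\psi\ctgcl 1_L$ and comparing $M_\varphi$ with $M_{1_K}$ and $M_{1_L}$ (each of which strongly collapses onto the respective space) yields a finite chain of strong collapses and expansions connecting $K$ and $L$, as required.
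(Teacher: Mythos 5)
The paper offers no proof of this statement: it is quoted directly from Barmak--Minian \cite[Cor.~2.12]{B-M}, whose argument goes through cores: every (finite) complex strongly collapses to a minimal complex, minimal complexes are rigid (any map in the contiguity class of the identity of a minimal complex \emph{is} the identity), and the hypotheses $\psi\varphi\sim 1_K$, $\varphi\psi\sim 1_L$ then force the cores of $K$ and $L$ to be isomorphic. Your forward direction is correct and standard, and your cylinder lemmas are essentially sound: with your definition of $M_f$, every vertex $w\in V(K)$ is indeed dominated by $f(w)$, this persists after deleting other $K$-vertices, so $M_f\seco L$ (note this one-vertex-at-a-time collapse needs $V(K)$ finite, which is the setting of \cite{B-M} anyway); and the comparison $M_f\sim M_g$ for contiguous $f\ctg g$ via the cylinder of $H\colon K\times I_1\to L$ also checks out, since $M_H$ collapses onto the full subcomplexes spanned by $V(K)\times\{0\}\cup V(L)$ and $V(K)\times\{1\}\cup V(L)$, which are isomorphic to $M_f$ and $M_g$.

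The gap is in the final step, and it is precisely where the real content of the theorem lies. Your Lemma ``$f\sim g$ implies $M_f\sim M_g$'' only applies to maps with the \emph{same} domain and codomain, so it does not compare $M_\varphi$ (a cylinder over $K\to L$) with $M_{1_K}$ (a cylinder over $K\to K$); and if one instead applies it to the pairs it does cover, namely $\psi\varphi\sim 1_K$ and $\varphi\psi\sim 1_L$, one gets $M_{\psi\varphi}\sim M_{1_K}$ and $M_{\varphi\psi}\sim M_{1_L}$, which is vacuous: \emph{every} mapping cylinder strongly collapses onto its target, so both sides of the first comparison are strongly equivalent to $K$ and both sides of the second to $L$, regardless of whether $\varphi$ is an equivalence. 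At no point does your argument produce a chain of collapses and expansions joining $K$ to $L$; for that you would need something like ``$M_\varphi$ has the strong homotopy type of $K$ when $\varphi$ admits a contiguity inverse'' (the analogue of ``the source is a deformation retract of the mapping cylinder of a homotopy equivalence''), and that claim is exactly as hard as the theorem itself --- in the simple-homotopy analogue it is even false, by Whitehead torsion, so it cannot follow from formal cylinder manipulations alone. To close the argument you either need to prove that statement directly, or switch to the Barmak--Minian route via cores and the rigidity of minimal complexes.
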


%!TEX root = main.tex

\section{Definitions of simplicial fibration}\label{S3}
The goal of this section is to establish a notion of fibration
in the simplicial context. As we shall see, there are several
options of doing this, depending on the particular kind of lifting
property we deal with.

Our first definition of fibration in the simplicial context
corresponds to simplicial maps with the {\em contiguity lifting
property} with respect to any simplicial complex.
\begin{definition}\label{DEF1}
A simplicial map $p\colon E \to B$ is a {\em type I simplicial
fibration} if  for any simplicial complex $K$ (finite or not),
given  any two  contiguous simplicial maps $f,g \colon K\to B$, $f
\simc g$, and  any map $\tilde f \colon K \to E$ such that $p\circ
\tilde f= f$, there exists a simplicial map $\tilde g \colon K \to
E$ such that $\tilde f$ and $\tilde g$ are contiguous, $\tilde
f\simc \tilde g$, and $p\circ \tilde g=g$ (see Figure
\ref{LIFTCONT}).

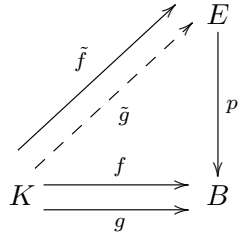
\begin{figure}[h]\label{LIFTCONT}
\begin{center}
\begin{minipage}{10cm}
\xymatrix{
&&\ E\ \ar[dd]^p \\
&&\\
K\ar@<+1ex>[rr]^{\ f\ }\ar@<-1ex>[rr]_{\ g\
}\ar@<+3ex>[rruu]^{\
\tilde f}\ar@{-->}@<+1ex>[rruu]_{\tilde g\ }&&\ B\ }
\end{minipage}
\end{center}
\caption{Contiguity lifting  property}
\end{figure}
\end{definition}

A second option to give a simplicial notion of fibration consists
in generalizing the first definition to contiguity classes with a
given number of steps (see Definition \ref{MSTEPS}).

\begin{definition}\label{DEF2}
A simplicial map $p\colon E \to B$ is a {\em type II simplicial
fibration} if for any simplicial complex $K$, for any two
simplicial maps $f,g \colon K\to B$ in the same contiguity class,
$f \sim g$, with $m$ steps, and for any map $\tilde f \colon K \to
E$ such that $p\circ \tilde f= f$, there exists a simplicial map
$\tilde g \colon K \to E$ such that $\tilde f$ and $\tilde g$ are
in the same contiguity class with $m$ steps, $\tilde f\sim \tilde
g$, and $p\circ \tilde g=g$.
\end{definition}

At this point we are ready for the third definition of simplicial
fibration in terms of the notion of homotopy, introduced by Minian
(see Prop. \ref{MINIANCONT}).

\begin{definition}\label{DEF3}
The map $p \colon E \to B$ is a {\em type III simplicial
fibration} if given simplicial maps $H\colon K\times I_m\to B$ and
$\varphi\colon K\times \{0\} \to E$ as in the following
commutative diagram:
\begin{equation}\label{Type3}
\xymatrix{
K\times \{ 0\}\ \ar@{^{(}->}[d]_{i_0^m}\ar[r]^{\varphi}&\ E\ \ar[d]^p \\
K\times I_m\ \ar@{-->}@<+0ex>[ru]^{\widetilde{H}\ }\ar[r]^{\ H\ }&\ B\ }
\end{equation}

\noindent there exists a simplicial map $\widetilde{H} \colon K
\times I_m \to E$ such that $\widetilde{H}\circ i_0^m=\varphi$ and
$p\circ \widetilde{H}=H$.
\end{definition}
%\begin{center}
% $\xymatrix{
% K\time \{0\} \ \ar@{^{(}->}[d]_{i_0}\ar[r]^{\tilde\varphi}&\ E\ \ar[d]^p \\
% K\times I_m\ \ar@{-->}@<+0ex>[ru]^{\Omega\ }\ar[r]^{\ \omega\circ t\ }&\ B\ }$
% \end{center}

\begin{theorem}\label{equiv}
The three definitions of simplicial fibration are equivalent.
\end{theorem}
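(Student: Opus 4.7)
The plan is to establish the cycle type III $\Rightarrow$ type II $\Rightarrow$ type I $\Rightarrow$ type III. The implication type II $\Rightarrow$ type I is immediate: two contiguous maps $f\simc g$ form the $m=1$ case of being in the same contiguity class, so the lift produced by type II satisfies exactly the conclusion of type I.

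For type III $\Rightarrow$ type II, I would invoke Proposition~\ref{MINIANCONT}. Given $f\sim g$ in $m$ steps and a lift $\tilde f$ of $f$, Minian's proposition provides a simplicial map $H\colon K\times I_m\to B$ with $H(\cdot,0)=f$ and $H(\cdot,m)=g$. Applying type III to the commutative square with $\varphi=\tilde f$ (under the identification $K\cong K\times\{0\}$) yields $\widetilde H\colon K\times I_m\to E$. Setting $\tilde g=\widetilde H(\cdot,m)$ gives $p\circ\tilde g=g$, and Minian's proposition applied to $\widetilde H$ shows $\tilde f\sim\tilde g$ in $m$ steps.

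The core step is type I $\Rightarrow$ type III. For the square of Definition~\ref{DEF3}, define $f_j\colon K\to B$ by $f_j(v)=H(v,j)$, $0\leq j\leq m$. Because $\sigma\times\{j,j+1\}$ is a simplex of the categorical product $K\times I_m$ for every $\sigma\in K$, the image $f_j(\sigma)\cup f_{j+1}(\sigma)=H(\sigma\times\{j,j+1\})$ is a simplex of $B$, so $f_j\simc f_{j+1}$. Starting with $\tilde f_0=\varphi$ (viewed as $K\to E$) and iterating the type I lifting property along the chain $f_0\simc f_1\simc\cdots\simc f_m$, one obtains simplicial maps $\tilde f_j\colon K\to E$ with $p\circ\tilde f_j=f_j$ and $\tilde f_j\simc\tilde f_{j+1}$. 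Assemble $\widetilde H\colon K\times I_m\to E$ by $\widetilde H(v,j)=\tilde f_j(v)$. The simplices of $I_m$ are the singletons $\{j\}$ and the edges $\{j,j+1\}$, so for any simplex $\tau$ of $K\times I_m$ the image $\widetilde H(\tau)$ equals either $\tilde f_j(\pr_1(\tau))$ or $\tilde f_j(\pr_1(\tau))\cup\tilde f_{j+1}(\pr_1(\tau))$, both simplices of $E$; hence $\widetilde H$ is simplicial. The conditions $p\circ\widetilde H=H$ and $\widetilde H\circ i_0^m=\varphi$ hold by construction.

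The main obstacle is precisely this final assembly: the type I hypothesis only produces each $\tilde f_j$ in isolation, so one must verify that the independent choices fit together as a simplicial map on the full cylinder. What rescues the construction is the $1$-dimensionality of $I_m$, which reduces simpliciality on $K\times I_m$ to pairwise contiguity of consecutive $\tilde f_j$—exactly what type I supplies at each step.
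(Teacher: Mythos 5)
Your proof is correct, and it draws on the same toolkit as the paper — Proposition~\ref{MINIANCONT}, the contiguity of the slices $f_j(v)=H(v,j)$, stepwise lifting, and the reassembly of the lifted slices into a cylinder map — but it runs the cycle in the opposite direction: you prove type III $\Rightarrow$ type II $\Rightarrow$ type I $\Rightarrow$ type III, whereas the paper proves I $\Rightarrow$ II $\Rightarrow$ III $\Rightarrow$ I. Concretely, your I $\Rightarrow$ III is the concatenation of the paper's two steps I $\Rightarrow$ II (iterating single-step lifts along $f_0\simc\cdots\simc f_m$) and II $\Rightarrow$ III (slicing $H$ and assembling $\widetilde H(v,j)=\tilde f_j(v)$), while your III $\Rightarrow$ II is the paper's III $\Rightarrow$ I argument carried out for general $m$ rather than $m=1$, and your II $\Rightarrow$ I is the trivial $m=1$ specialization. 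One small benefit of your arrangement: in the paper's II $\Rightarrow$ III the intermediate lifts $\widetilde\varphi_i$ are asserted to satisfy $p\circ\widetilde\varphi_i=\varphi_i$, which the literal statement of Definition~\ref{DEF2} only guarantees for the final map, so one must in effect invoke type II stepwise (its $m=1$ case, i.e.\ type I); your direct I $\Rightarrow$ III obtains exactly these lifts from type I with no such gloss. One cosmetic remark: in your assembly step, for a simplex $\tau$ of $K\times I_m$ with $\pr_2(\tau)=\{j,j+1\}$ the image $\widetilde H(\tau)$ is in general only contained in, not equal to, $\tilde f_j(\pr_1(\tau))\cup\tilde f_{j+1}(\pr_1(\tau))$; this is harmless, since subsets of simplices are simplices, and the contiguity $\tilde f_j\simc\tilde f_{j+1}$ supplied by type I is precisely what makes that union a simplex, as you observe.
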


\begin{proof}$\mbox{ }$\par
\medskip
\textbf{Type I $\Rightarrow$   Type II:} Assuming that the
simplicial map $p \colon E \to B$ satisfies Definition~\ref{DEF1},
let us consider a simplicial complex $K$ and two simplicial maps
$f,g \colon K \to B$ in the same contiguity class with $m$ steps,
that is, there exists a finite sequence of direct contiguities
$$f =\varphi_0 \simc \varphi_1 \simc \cdots \simc \varphi_m = g.$$
Taking a simplicial map $\widetilde{f}$ such that $p\circ \tilde
f= f$, by Definition~\ref{DEF1}, there is a simplicial map
$\widetilde{\varphi_1} \colon K \to E$ such that $\widetilde{f}
\simc \widetilde{\varphi_1}$ and $p\circ\widetilde{\varphi_1} =
\varphi_1$. Iterating the same argument for every $\varphi_i$ with
$i=1,\dots, m$, we obtain a finite sequence of direct
contiguities, $\widetilde f =\widetilde \varphi_0 \simc
\widetilde\varphi_1 \simc \cdots \simc \widetilde \varphi_m$,
where $p\circ\widetilde{\varphi_i} = \varphi_i $, $i=1,\dots,m$.
In particular, for $i=m$ we get a simplicial map $\widetilde
g=\widetilde \varphi_m$ satisfying $\tilde f\sim \tilde g$ with
$m$ steps and $p\circ \tilde g=g$ since $\varphi_m = g$. So, we conclude that $P$ is a type II simplicial fibration.

\medskip

\textbf{Type II $\Rightarrow$   Type III:} Let us assume that the
simplicial map $p \colon E \to B$ satisfies Definition
~\ref{DEF2}. Consider a simplicial complex $K$ and simplicial maps
$\varphi \colon K\times \{0\} \to E$ and $H \colon K\times I_m \to
B$ such that Diagram \eqref{Type3} is commutative.

Now, we define $\varphi_i \colon K \to B$ by
$\varphi_i(v)=H(v,i)$, where $v\in V(K)$ and $i\in V(I_m)$. By
means of Lemma \ref{lemacontig}, we only need to prove the
contiguity condition. These maps are contiguous because given
$\tau \in K$ the following fact holds true:
\begin{equation}\label{TRUENUEVO}
\varphi_i(\tau)\cup \varphi_{i+1}(\tau)=H(\tau \times \{i,i+1\})\in B
\end{equation}
since $H$ is a simplicial map.
Then, by hypothesis, there exists a finite chain of  simplicial maps $\widetilde \varphi_i\colon K \to E$ and
direct contiguities, with $m$ steps,
$\widetilde
\varphi_0 \simc \widetilde\varphi_1 \simc \cdots \simc \widetilde
\varphi_m$,  such that $p\circ \widetilde\varphi_i=\varphi_i$, for all $i=0,\dots,m$.

Hence, the map $\widetilde{H} \colon K\times I_m \to E$ given by
$\widetilde{H}(v, i)=\widetilde{\varphi}_i(v)$ is simplicial, by an argument analogous to \eqref{TRUENUEVO}, and satisfies
$\widetilde{H}\circ i_0^m= \varphi$ and $p\circ \widetilde{H}=H$. So, $H$ satisfies Definition \ref{DEF3}.
%\textbf{Type II $\Rightarrow$   Type III:} Let us assume that the
%simplicial map $p \colon E \to B$ satisfies Definition
%~\ref{DEF2}. Consider a simplicial complex $K$ and simplicial maps
%$\varphi \colon K\times \{0\} \to E$ and $H \colon K\times I_m \to
%B$ such that Diagram \eqref{Type3} is commutative.
%
%Now, we define $\varphi_i \colon K \to B$ by
%$\varphi_i(v)=H(v,i)$, where $v\in V(K)$ and $i\in V(I_m)$. By
%means of Lemma \ref{lemacontig}, we only need to prove the
%contiguity condition. Given $\tau \in K$, the following fact holds
%true
%$$\varphi_i(\tau)\cup \varphi_{i+1}(\tau)=H(\tau \times \{i,i+1\})\in B$$
%since $H$ is a simplicial map. Let us consider the simplicial map
%$\widetilde{f} \colon K\to E$ given by
%$\widetilde{f}(\tau)=\varphi(\tau,0)$ where $\tau\in K$. Notice
%that $p\circ  \widetilde{f}=f$. By applying Definition \ref{DEF2}
%to $f=\varphi_0$ and $g=\varphi_m$ and taking into account that
%they are in the same contiguity class, we obtain a finite chain of
%direct contiguities with $m$ steps, $\widetilde f =\widetilde
%\varphi_0 \simc \widetilde\varphi_1 \simc \cdots \simc \widetilde
%\varphi_m=\widetilde{g}$, where $p\circ\widetilde{g} = g$.
%Moreover, since Type I implies Type II, it follows that
%$p\circ\widetilde{\varphi_i} = \varphi_i $ with $i=0,\dots,m$.
%Hence the map $\widetilde{H} \colon K\times I_m \to E$ given by
%$\widetilde{H}(\tau, i)=\widetilde{\varphi}_i(\tau)$ with $\tau\in
%K$ and $i\in V(I_m)$ is simplicial and satisfies
%$\widetilde{H}\circ i_0^m= \varphi$ and $p\circ \widetilde{H}=H$. Hence, $H$ satisfyies Definition \ref{DEF3}.

\medskip

\textbf{Type III $\Rightarrow$  Type I:} Let us assume that the
simplicial map $p \colon E \to B$ satisfies Definition~\ref{DEF3}.
Consider a simplicial complex $K$ and two contiguous simplicial
maps $f,g \colon K \to B$. Now, by Proposition~\ref{MINIANCONT},
with $m=1$, there exist a homotopy $H \colon K\times I_1 \to B $
such that $H(v,0)=f(v)$ and $H(v,1)=g(v)$ for all $v\in V(K)$.

Consider a simplicial map $\tilde f \colon K \to E$ such that
$p\circ \tilde f= f$. By Definition~\ref{DEF3}, there is a
simplicial map $\widetilde{H} \colon K \times I_1 \to E$ such that
$\widetilde{H}\circ i_0^1=\varphi$, where $p\circ\varphi (v,0)=
\widetilde{f}(v)$ for all $v\in V(K)$, and $p\circ
\widetilde{H}=H$. Let $\widetilde{g} \colon K \to E$ given by
$\widetilde{g}(v)=\widetilde{H}(v,1)$, where $v\in V(K)$. By
Proposition~\ref{MINIANCONT}, we conclude that
$\widetilde{f}\simc\widetilde{g}$.
\end{proof}

\begin{remark}
Notice that the complex $K$ that we considered in the definitions
above may not be finite.
\end{remark}

\begin{remark}\label{FINITELIFT} Observe that it is possible to
restrict these definitions to the cases where $K$ is finite. This
allows us to introduce the corresponding notions of
\emph{simplicial finite-fibration} of type I, II and III, which
are equivalent by the finite version of Theorem \ref{equiv}.
\end{remark}

%!TEX root = main.tex

\section{Examples and properties}\label{S4}

In this section we will introduce some important examples of
simplicial fibrations.  The following proposition will give us the
first basic ones. Notice that, unless otherwise specified, we will
use the notion of type III simplicial fibration given in
Definition \ref{DEF3}.

\begin{proposition}\label{firstprop} ${}$

\begin{enumerate}
\item[(i)] Any simplicial isomorphism is a simplicial fibration.

\item[(ii)] If $*$ denotes the one-vertex simplicial complex,
then the constant simplicial map $E\rightarrow *$ is a simplicial fibration,
for any  simplicial complex $E$.

\item[(iii)] The composition of simplicial fibrations is a simplicial
fibration.
\end{enumerate}
\end{proposition}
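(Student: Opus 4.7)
The plan is to verify all three items directly from the Type III definition (Definition \ref{DEF3}), since for routine formal properties this is the most convenient characterization. In each case I start from an arbitrary diagram of the form \eqref{Type3} and construct the required lift $\widetilde H \colon K\times I_m \to E$ satisfying $\widetilde H\circ i_0^m = \varphi$ and $p\circ \widetilde H = H$.

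For (i), if $p\colon E\to B$ is a simplicial isomorphism with inverse $p^{-1}$, the natural candidate is $\widetilde H \eqdef p^{-1}\circ H$, which is automatically simplicial and satisfies $p\circ \widetilde H = H$. The equality $\widetilde H\circ i_0^m = \varphi$ then follows by applying $p^{-1}$ to $p\circ\varphi = H\circ i_0^m$, which holds by commutativity of \eqref{Type3}.

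For (ii), with $B=\ast$ the map $H$ is necessarily constant, so the commutativity condition in \eqref{Type3} imposes no constraint beyond what $\varphi$ already does. I would define $\widetilde H \colon K\times I_m \to E$ by $\widetilde H(v,i) \eqdef \varphi(v,0)$ on vertices; this is simplicial because for any simplex $\sigma\times\tau \in K\times I_m$ one has $\widetilde H(\sigma\times\tau) = \varphi(\sigma\times\{0\})\in E$. The conditions $p\circ \widetilde H = H$ (both sides take values in $\ast$) and $\widetilde H\circ i_0^m = \varphi$ are then immediate.

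For (iii), given composable fibrations $p\colon E\to B$ and $q\colon B\to C$, and a diagram \eqref{Type3} for $q\circ p$, the standard two-step lifting procedure applies: first use that $q$ is a fibration to lift $H\colon K\times I_m \to C$ against the initial condition $p\circ\varphi\colon K\times\{0\}\to B$, producing an intermediate $\widetilde H_1\colon K\times I_m \to B$; then use that $p$ is a fibration to lift $\widetilde H_1$ against $\varphi\colon K\times\{0\}\to E$, producing the desired $\widetilde H$. The identities $p\circ\widetilde H = \widetilde H_1$ and $q\circ \widetilde H_1 = H$ combine to give $(q\circ p)\circ\widetilde H = H$, and $\widetilde H\circ i_0^m = \varphi$ by construction. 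No step here presents a genuine obstacle; the mild care needed is only to check that the constant extension in (ii) is indeed simplicial on the categorical product $K\times I_m$, and that the intermediate initial condition $p\circ\varphi$ in (iii) is compatible with $\widetilde H_1\circ i_0^m$ so that the second lifting can be invoked.
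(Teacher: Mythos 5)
Your proposal is correct and follows essentially the same route as the paper: lifting via $p^{-1}\circ H$ for isomorphisms, the constant extension $(v,i)\mapsto\varphi(v,0)$ for the map to $\ast$, and the standard two-step lift (first through $q$, then through $p$) for compositions, all verified against the Type III definition. The only cosmetic remark is that in (ii) a simplex of the categorical product $K\times I_m$ need not have the product form $\sigma\times\tau$, but your argument goes through unchanged since the image of any simplex $\mu$ is $\varphi(\pr_1(\mu)\times\{0\})$, which is a simplex of $E$.
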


\begin{proof}
(i) is easily checked; indeed, if $p\colon E\stackrel{\cong
}{\longrightarrow }B$ is a simplicial isomorphism, then, given any
$m\geq 0$ and a commutative diagram $p\circ \varphi =H\circ
i^m_0,$ we have that the composition $\widetilde{H}\eqdef
p^{-1}\circ H$ satisfies the required conditions. Item (ii) is
also straightforward, since for any simplicial map $\varphi \colon
K\times \{0\} \to E$ the vertex map $K\times I_m\rightarrow E$,
given by $(v,i)\mapsto \varphi (v,0)$, is simplicial.

Now, in order to prove (iii), consider $p\colon E\rightarrow B$
and $q\colon B\rightarrow C$ simplicial fibrations, $m\geq 1$, and
the following commutative diagram of simplicial maps:
$$
\xymatrix{ {K\times \{0\}} \ar[rr]^{\varphi } \ar@{^{(}->}[d]_{i_0^m} & & {E}
\ar[d]^{q\circ p} \\ {K\times I_m} \ar[rr]_G & & {C} }
$$
Since $q$ is a simplicial fibration we can consider a simplicial
map $\widehat{G}\colon K\times I_m\rightarrow B$ satisfying
$\widehat{G}\circ i_0^m=p\circ \varphi $ and $q\circ
\widehat{G}=G$. Finally, as $p$ is a simplicial fibration we can
also consider a simplicial map $\widetilde{G}\colon K\times
I_m\rightarrow E$ such that $\widetilde{G}\circ i_0^m=\varphi $
and $p\circ \widetilde{G}=\widehat{G}$. From these conditions we
have that $(q\circ p)\circ \widetilde{G}=q\circ \widehat{G}=G$.
\end{proof}

For the next result we need to recall the pullback construction
for simplicial complexes. Given any pair of simplicial maps
$f\colon K\rightarrow M$ and $g\colon L\rightarrow M$ their
pullback is given as the following diagram:
$$\xymatrix{
{K\times _ML} \ar[rr]^{f'} \ar[d]_{g'} & & {L} \ar[d]^g \\ {K}
\ar[rr]_f & & {M} }$$ \noindent where $K\times _M L$ is the full
simplicial subcomplex of $K\times L$ whose underlying vertex set
is given by those pairs of vertices $(v,w)\in K\times L$
satisfying $f(v)=g(w)$. The induced simplicial maps $f'$ and $g'$
are given by $f'(v,w)=w$ and $g'(v,w)=v$. It is plain to check
that this construction is the pullback of $f$ and $g$ in the
category of simplicial complexes.

\begin{proposition}\label{pullback}
Let $p\colon E\rightarrow B$ be a simplicial fibration and
$f\colon K\rightarrow B$ any simplicial map. Then the simplicial
map $p^\prime \colon K\times _B E\rightarrow K$ induced by $p$ in
the pullback
$$\xymatrix{
{K\times _BE} \ar[rr]^{f'} \ar[d]_{p'} & & {E} \ar[d]^p \\ {K}
\ar[rr]_f & & {B} }$$ \noindent is also a simplicial fibration.
\end{proposition}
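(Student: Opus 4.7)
The plan is to reduce the lifting problem for $p'$ to a lifting problem for $p$ by composing with the structural maps of the pullback, and then recover the desired lift using the universal property of the pullback in the category of simplicial complexes.

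More precisely, suppose we are given a simplicial complex $L$, $m\geq 1$, and a commutative diagram
\begin{equation*}
\xymatrix{
L\times\{0\}\ \ar@{^{(}->}[d]_{i_0^m}\ar[r]^-{\varphi}&\ K\times_B E\ \ar[d]^{p'} \\
L\times I_m\ \ar[r]^-{H}&\ K\ }
\end{equation*}
The first step is to translate this into a lifting problem for $p$. Consider the composites $f'\circ\varphi\colon L\times\{0\}\to E$ and $f\circ H\colon L\times I_m\to B$. Using $p\circ f'=f\circ p'$ together with $p'\circ\varphi=H\circ i_0^m$, one immediately checks that $p\circ(f'\circ\varphi)=(f\circ H)\circ i_0^m$, so these maps form a commutative square with $p$.

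Since $p$ is a (type III) simplicial fibration, I apply Definition~\ref{DEF3} to obtain a simplicial lift $\widetilde{G}\colon L\times I_m\to E$ satisfying $\widetilde{G}\circ i_0^m=f'\circ\varphi$ and $p\circ\widetilde{G}=f\circ H$. Now the pair $(H,\widetilde{G})$ satisfies $p\circ\widetilde{G}=f\circ H$, so by the universal property of the pullback $K\times_B E$ (which, as noted before the statement, is the genuine categorical pullback in the category of simplicial complexes), there is a unique simplicial map $\widetilde{H}\colon L\times I_m\to K\times_B E$ with $p'\circ\widetilde{H}=H$ and $f'\circ\widetilde{H}=\widetilde{G}$.

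It only remains to verify $\widetilde{H}\circ i_0^m=\varphi$. Both sides are simplicial maps into the pullback, so by the uniqueness clause of its universal property it suffices to check that they agree after composition with $p'$ and with $f'$. For $p'$: $p'\circ\widetilde{H}\circ i_0^m=H\circ i_0^m=p'\circ\varphi$. For $f'$: $f'\circ\widetilde{H}\circ i_0^m=\widetilde{G}\circ i_0^m=f'\circ\varphi$. This finishes the argument. I do not foresee a genuine obstacle here: the proof is a purely formal diagram chase, and the only thing to keep in mind is that one must appeal to the pullback property inside the category of simplicial complexes, which is exactly what the paragraph preceding the proposition establishes.
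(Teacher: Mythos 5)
Your proposal is correct and follows essentially the same route as the paper: compose the given square with the pullback square, lift against $p$ using the type III fibration property, and then induce the desired lift into $K\times_B E$ via the universal property of the pullback, checking the restriction to $L\times\{0\}$ by uniqueness. Your verification of $\widetilde{H}\circ i_0^m=\varphi$ is in fact slightly more explicit than the paper's, which simply asserts it from the universal property.
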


\begin{proof}
Take any commutative diagram where $L$ is a simplicial
complex:
$$\xymatrix{
{L\times\{0\}} \ar[r]^{g} \ar@{^{(}->}[d]_{i_0^m} &  {K\times _BE} \ar[d]^{p'} \\
{L\times I_m} \ar[r]_(.6){G}  & {K} }$$ Considering the
composition of this diagram with the pullback square and using the
fact that $p$ is a simplicial fibration, one can take a simplicial
map $\widetilde{H}\colon L\times I_m\to E$ satisfying $p\circ
\widetilde{H}=f\circ G$ and $\widetilde{H}\circ i_0^m=f'\circ g$.
By the pullback property there is an induced simplicial map
$\widetilde{G}\colon L\times I_m\rightarrow K\times _B E$ making
commutative the following diagram:
$$\xymatrix{
{L\times I_m} \ar@{-->}[dr]^{\widetilde{G}} \ar@/^1pc/[drrr]^{\widetilde{H}}
\ar@/_1pc/[ddr]_{G}   \\
 & {K\times _BE} \ar[rr]^{f'} \ar[d]_{p'} & & {E} \ar[d]^p
  \\
 & {K} \ar[rr]_{f} & & {B}
}$$ By the universal property of the pullback we have that $\widetilde{G}\circ i_0^m=g.$
\end{proof}

\begin{corollary}
Let $K$ and $L$ be simplicial complexes and $K\times L$ be their
categorical product. Then the canonical projections $\pr_1\colon
K\times L\rightarrow K$ and $\pr_2\colon K\times L\rightarrow L$
are simplicial fibrations.
\end{corollary}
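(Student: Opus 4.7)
The plan is to realize the categorical product as a pullback and then invoke Proposition \ref{pullback} together with Proposition \ref{firstprop}(ii).

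First, I would observe that the categorical product $K\times L$ is precisely the pullback of the two constant maps $c_K\colon K\to \pt$ and $c_L\colon L\to \pt$ in the category of simplicial complexes. Indeed, the vertex set of the pullback $K\times _\pt L$ consists of pairs $(v,w)\in V(K)\times V(L)$ with $c_K(v)=c_L(w)$, which imposes no constraint and yields $V(K)\times V(L)$. The simplices of the pullback are those subsets $\sigma\subseteq V(K)\times V(L)$ with $\pr_1(\sigma)\in K$ and $\pr_2(\sigma)\in L$, matching exactly the definition of the categorical product. Under this identification, the projections $\pr_1$ and $\pr_2$ of the categorical product become the canonical projections of the pullback square.

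Next, by Proposition \ref{firstprop}(ii), the constant map $c_L\colon L\rightarrow \pt$ is a simplicial fibration. Applying Proposition \ref{pullback} to the pullback diagram
$$\xymatrix{
{K\times L} \ar[rr]^{\pr_2} \ar[d]_{\pr_1} & & {L} \ar[d]^{c_L} \\
{K} \ar[rr]_{c_K} & & {\pt} }$$
we conclude that $\pr_1\colon K\times L\to K$ is a simplicial fibration. Symmetrically, reversing the roles of $K$ and $L$ and using that $c_K\colon K\to \pt$ is a simplicial fibration, Proposition \ref{pullback} shows $\pr_2\colon K\times L\to L$ is a simplicial fibration.

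There is no substantive obstacle: the only point requiring a line of verification is the identification of the categorical product with the pullback over the one-vertex complex, which is immediate from the definitions of the two constructions. Once this identification is in hand, the corollary is a direct application of the two previously established results.
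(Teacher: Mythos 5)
Your argument is correct and is essentially the paper's own proof: the paper likewise realizes $K\times L$ as the pullback of the constant maps to the one-vertex complex and then combines Proposition \ref{firstprop}(ii) with Proposition \ref{pullback}. Your extra verification that the pullback over $\pt$ coincides with the categorical product is a fine (and accurate) elaboration of what the paper leaves implicit.
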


\begin{proof}
One has just to take into account  part (ii) of Proposition
\ref{firstprop} because the following square is a pullback:
$$\xymatrix{
{K\times L} \ar[r]^{\pr_2} \ar[d]_{\pr_1} & {L} \ar[d] \\
{K} \ar[r] & {*} }$$
\end{proof}

Another interesting example of simplicial fibration is given by
the product of simplicial fibrations. Recall that, if $f_1\colon
K_1\to L_1$ and $f_2\colon K_2\to L_2$ are simplicial maps, then
one can construct their \emph{product simplicial map}:
$$f_1\times f_2\colon K_1\times K_2\to L_1\times
L_2$$
defined as
$$(f_1\times f_2)(v_1,v_2)\eqdef (f_1(v_1),v_2(v_2)),$$
for any vertex $(v_1,v_2)\in K_1\times K_2.$

\begin{proposition}
Let $p_1\colon E_1\rightarrow B_1$ and $p_2\colon E_2\rightarrow B_2$ be
simplicial fibrations. Then their product $p_1\times p_2$ is also
a simplicial fibration.
\end{proposition}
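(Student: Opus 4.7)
The plan is to avoid any explicit lifting construction and instead reduce the statement to results already established in this section. The starting point is the factorization
$$p_1\times p_2 \;=\; (p_1\times \id_{B_2})\circ (\id_{E_1}\times p_2)\colon E_1\times E_2\longrightarrow E_1\times B_2\longrightarrow B_1\times B_2,$$
so, in view of Proposition~\ref{firstprop}(iii), it suffices to prove that each of the two intermediate maps is a simplicial fibration.

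For the second arrow $\id_{E_1}\times p_2$, I would verify that the square
$$\xymatrix{E_1\times E_2 \ar[r]^{\pr_2} \ar[d]_{\id_{E_1}\times p_2} & E_2 \ar[d]^{p_2} \\ E_1\times B_2 \ar[r]_{\pr_2} & B_2}$$
is a pullback of simplicial complexes. Unwinding the pullback construction recalled before Proposition~\ref{pullback}, the vertex set of $(E_1\times B_2)\times_{B_2}E_2$ consists of the triples $((e_1,b_2),e_2)$ with $p_2(e_2)=b_2$, which is in bijection with the vertices of $E_1\times E_2$ via $((e_1,p_2(e_2)),e_2)\mapsto (e_1,e_2)$; the description of simplices on both sides agrees because $p_2$ is simplicial, so the bijection is a simplicial isomorphism identifying $\id_{E_1}\times p_2$ with the pullback projection. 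Hence Proposition~\ref{pullback} applied to $p_2$ along $\pr_2\colon E_1\times B_2\to B_2$ shows that $\id_{E_1}\times p_2$ is a simplicial fibration. A completely symmetric argument, using the pullback of $p_1$ along $\pr_1\colon B_1\times B_2\to B_1$, shows that $p_1\times \id_{B_2}$ is a simplicial fibration, and composing the two conclusions by Proposition~\ref{firstprop}(iii) finishes the proof.

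The only delicate point, and the one I would write out in detail, is this identification of the two squares as pullbacks in the category of simplicial complexes: one must check that the full subcomplex of $(E_1\times B_2)\times E_2$ singled out by the vertex condition $p_2(e_2)=b_2$ really coincides on the nose with the categorical product $E_1\times E_2$ on the opposite vertex, and that the induced maps match the factors of $p_1\times p_2$. Once this verification is done, everything else is formal. As a conceptually simpler but less structural alternative, one can instead argue directly: given a lifting problem for $p_1\times p_2$, compose with the projections to $B_i$ and $E_i$ to obtain two separate lifting problems, solve each using the fibration property to get $\widetilde{H_1},\widetilde{H_2}\colon K\times I_m\to E_i$, and set $\widetilde{H}(v,i)\eqdef(\widetilde{H_1}(v,i),\widetilde{H_2}(v,i))$, which is automatically simplicial into the categorical product because both of its coordinate maps are.
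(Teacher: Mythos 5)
Your argument is correct, but it takes a genuinely different route from the paper. The paper proves the statement directly, exactly along the lines of the "alternative" you sketch in your last sentence: it writes $\varphi=(\varphi_1,\varphi_2)$ and $H=(H_1,H_2)$, solves the two coordinate lifting problems separately using that each $p_i$ is a fibration, and observes that $\widetilde{H}=(\widetilde{H}_1,\widetilde{H}_2)$ is simplicial into the categorical product because its two coordinates are. Your main route instead factors $p_1\times p_2=(p_1\times\id_{B_2})\circ(\id_{E_1}\times p_2)$ and reduces everything to Proposition~\ref{firstprop}(iii) and Proposition~\ref{pullback}; the delicate point you single out does go through, precisely because the pullback in the paper is the \emph{full} subcomplex of the categorical product and simplices of the categorical product are detected by the two projections, so (using that $p_2$ is simplicial) the simplices of $(E_1\times B_2)\times_{B_2}E_2$ correspond exactly to those of $E_1\times E_2$, and similarly on the other factor. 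What your approach buys is conceptual economy: no new lifting construction is needed, the statement becomes a formal consequence of stability of fibrations under pullback and composition, and the same scheme proves the analogous closure property in any setting where those two stability results hold. What the paper's direct proof buys is brevity and an explicit lift, with no need to verify a pullback identification; it is also the version that visibly transfers verbatim to the finite-fibration setting of Remark~\ref{FINITELIFT}, although your argument does too, since Propositions~\ref{firstprop} and~\ref{pullback} restrict to finite test complexes without change.
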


\begin{proof}
Let $K$ be a  simplicial complex, consider
$$\varphi
=(\varphi_1,\varphi _2)\colon K\times \{0\}\to E_1\times E_2$$ and
$$H=(H_1,H_2)\colon K\times I_m\to B_1\times B_2$$ simplicial maps
such that $(p_1\times p_2)\circ \varphi =H\circ i_0^m$. As
$p_i\circ \varphi _i=H_i\circ i_0^m $ and $p_i$ is a simplicial
fibration, there is a simplicial map $\widetilde{H}_i\colon
K\times I_m\to E_i$ such that $p_i\circ \widetilde{H}_i=H_i$ and
$\widetilde{H}_{i}\circ i_0^m=\varphi _i$ for $i=1,2$. Hence, the
simplicial map
$$\widetilde{H}\colon =(\widetilde{H}_1,\widetilde{H}_2)\colon K\times
I_m\to E_1\times E_2$$ verifies the expected conditions.
\end{proof}

%EL TEOREMA 4.5 ESTABA AQUI
%\qu{movido}

%!TEX root = main.tex

\section{The path complex $\PP K$}\label{S5}

\subsection{Moore paths}

Consider the one-dimensional simplicial complex $Z$, whose
vertices are all the integers $i\in \mathbb{Z}$ and whose
1-simplices are all the consecutive pairs $\{i,i+1\}$, that is,
$\mathbb{Z}$ is a triangulation of the real line.

\begin{definition}[\cite{G}]
Let $K$ be a simplicial complex. A \emph{Moore path} in $K$ is a
simplicial map $\gamma \colon Z\rightarrow K$ which is eventually
constant on the left and eventually constant on the right, i.e.,
there exist integers $i^-,i^+\in \mathbb{Z}$ satisfying the two
following conditions:
\begin{enumerate}
\item[(i)] $\gamma (i)=\gamma (i^-),$ for all $i\leq i^-$,
\item[(ii)]  $\gamma (i)=\gamma (i^+),$ for all $i\geq i^+$.
\end{enumerate}

Obviously, if $i^-=i^+$ we have the constant map.
For a non constant Moore path $\gamma \colon Z\rightarrow K$ we can
consider the integers
\begin{align*}
\gamma^-\eqdef &\max \{i^-\colon \gamma (i)=\gamma
(i^-),\hspace{3pt} \text{for
all\ } i\leq i^-\} \\
\gamma ^+\eqdef &\min \{i^+\colon \gamma (i)=\gamma
(i^+),\hspace{3pt} \text{for all\ }i\geq i^+\}.
\end{align*}
\noindent Observe that $\gamma ^{-}<\gamma ^+$.
\end{definition}

\begin{definition}
The images $\alpha(\gamma)\eqdef \gamma (\gamma ^{-})$ and
$\omega(\gamma)\eqdef \gamma (\gamma ^+)$ are called the
\emph{initial vertex} and \emph{final vertex} of $\gamma $,
respectively. When $\gamma $ is constant we set $\gamma
^{-}=0=\gamma ^+$.
\end{definition}

If $a,b\in \mathbb{Z}$ with $a\leq b$, $[a,b]$ will denote the
full subcomplex of $\mathbb{Z}$ generated by all vertices $i$ with $a\leq
i\leq b$. Considering this notation, any Moore path $\gamma $ in
$K$ may be identified with the restricted simplicial map $\gamma
\colon [{\gamma }^{-},\gamma ^+]\rightarrow K$. The interval
$[\gamma ^-,\gamma ^+]$ will be called the \emph{support} of
$\gamma $.

If $\gamma $ is a Moore path in $K$ with support $[\gamma
^-,\gamma ^+]$, then one can take the \emph{reverse} Moore path
$\overline{\gamma }$ as $$\overline{\gamma}(i)=\gamma (-i),$$
whose support is $[-\gamma ^+,-\gamma ^-]$. Notice that this
reparametrization describes $\gamma$ in the opposite direction.

If $\gamma $ is a Moore path in $K$ with support $[\gamma
^-,\gamma ^+]$ such that $\gamma ^+ - \gamma ^- = m$, then we
define one \emph{normalized} Moore path $|\gamma |\colon I_m\to K$
as
$$|\gamma |(i)=\gamma (i+\gamma^-)\,.$$
The advantage of this reparametrization is that the support of
$|\gamma |$ is $[0,m],$ and therefore it will be more manageable
when dealing with simplicial fibrations.

%(in fact, one may also consider any restriction $[a,b]\rightarrow
%K$ where $a,b$ are fixed integers satisfying that $a\leq \gamma
%^{-}$ and $b\geq \gamma ^+$).

%For a Moore path $\gamma :[a,b]\rightarrow K$ one can take the
%vertices $x=\gamma (a)$ and $y=\gamma (b)$. Such vertices are
%called \emph{initial} and \emph{final} vertices of $\gamma $,
%respectively.

\begin{definition}
Given $\gamma ,\delta$ Moore paths in $K$ such that
$\omega(\gamma)=\alpha(\delta)$, the \emph{product path} $\gamma
*\delta$ it is defined as
$$(\gamma *\delta)(i)=\begin{cases}
\gamma (i-\delta ^{-}), & i\leq \gamma ^{+}+\delta ^{-},\\
\delta (i-\gamma ^+), & i\geq \gamma ^{+}+\delta ^{-}.
\end{cases}$$
\end{definition}

It is not difficult to see that the support of $\gamma * \delta $
is $[\gamma ^{-}+\delta ^{-},\gamma ^+ +\delta ^+].$ The product
of Moore paths is strictly associative, that is, given Moore paths
$\gamma, \delta, \varepsilon$  such that
$\omega(\gamma)=\alpha(\delta)$ and
$\omega(\delta)=\alpha(\varepsilon)$, then
$$\gamma *(\delta *\varepsilon )=(\gamma *\delta )*\varepsilon.$$

Moreover, if $c_v$ denotes the constant path in a vertex $v\in K$,
then it is immediate to check that $\gamma *c_{w}=\gamma
=c_{v}*\gamma $ where $v=\alpha(\gamma)$ and $w=\omega(\gamma)$.

\subsection{The path complex}

Next we will consider a suitable notion of Moore path complex
associated to a simplicial complex $K$. In order to do so we need
to recall some categorical properties in the category \textbf{SC}
of simplicial complexes and simplicial maps.

Indeed, if $K$ and $L$ are simplicial complexes, we define the
simplicial complex $L^K$, whose vertices are all simplicial maps
$f\colon K\rightarrow L$ and where we consider as simplices the
finite sets $\{f_0,\dots,f_p\}$ of simplicial maps $K\rightarrow
L$ such that
$$\bigcup _{i=0}^pf_i(\sigma )\in L, \quad \text{for any simplex\ } \sigma \in K.$$

It is not difficult to check that this definition induces a structure of
simplicial complex in $L^K$. Moreover, denoting by $\times $ the
categorical product in \textbf{SC}, we have that the {\it
evaluation map} $$\ev\colon L^K\times K\rightarrow K,\quad
(f,v)\mapsto f(v),$$\noindent is simplicial. This fact allows us
to establish a natural bijection
\begin{equation}\label{biject}
\mathbf{SC}(M\times K,L)\equiv
\mathbf{SC}(M,L^K).
\end{equation}

Observe that for a simplicial map
$h\colon L\rightarrow L'$, there is a well defined map
$h^K\colon L^K\rightarrow (L')^K$, which preserves the identities and
the compositions, that is, we have a functor
$(-)^K\colon \mathbf{SC}\rightarrow \mathbf{SC}$. More is true, the
functor $(-)\times K\colon \mathbf{SC}\rightarrow \mathbf{SC}$ is left
adjoint to the functor $(-)^K\colon \mathbf{SC}\rightarrow \mathbf{SC}$.

\begin{definition}
Let $K$ be a simplicial complex. We define the Moore path complex
of $K$, denoted by $\PP K$, as the full subcomplex of $K^Z$
generated by all the Moore paths $\gamma \colon Z \rightarrow K$.
\end{definition}

Then, $\{\gamma _0,\dots,\gamma _p\}\subset \PP K$ defines
a simplex in $\PP K$ if and only if $$\{\gamma _0(i),\dots,\gamma
_p(i),\gamma _0(i+1),\dots,\gamma _p(i+1)\}$$ is a simplex
in $K$, for any integer $i\in \mathbb{Z}$.

An interesting property of $\PP K$ is that, for any bounded interval
$[a,b]\subset \mathbb{Z}$, the complex $K^{[a,b]}$ is, in fact, a full
subcomplex of $\PP K$:
$$K^{[a,b]}\subset \PP K.$$
Moreover, given a simplicial map $f \colon K \to L$, since the
composite $f\circ \gamma$ is a Moore path in $L$ for any Moore
path $\gamma $ in $K$, we obtain the Moore path complex functor
$\PP\colon \mathbf{SC}\rightarrow \mathbf{SC}$. One can check that
this functor preserves binary products and equalizers. Therefore
$\PP$ preserves finite limits and, in particular, pullbacks. In
general $\PP$ does not preserve limits; for instance, $\PP$ does
not preserve infinite products.

\begin{definition}\label{alpha}
The initial and final vertices of any given Moore path $\gamma $
define simplicial maps $\alpha \colon \PP K\to K$ and $\omega
\colon \PP K \to K$.
\end{definition}

%======== el teorema 4.5 ahora va aqui

\subsection{The path fibration}
\ The aim of this subsection  is to establish the following
important example of simplicial finite-fibration (see Remark
\ref{FINITELIFT}).

\begin{theorem}\label{MAINPATH}
If $K$ is any simplicial complex, then the following simplicial
map
$$\pi=(\alpha ,\omega )\colon \PP K\to K\times K$$ \noindent
is a simplicial finite-fibration where $\alpha$ and $\omega$ are
the maps given in Definition \ref{alpha}.
\end{theorem}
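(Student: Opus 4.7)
The plan is to construct the lift $\widetilde{H}\colon L\times I_m\to \PP K$ explicitly by extending each Moore path $\varphi(v)$ with two ``ramps'' read off from $H$. Write $H=(H_1,H_2)$ and, for each vertex $v\in V(L)$, put $\gamma_v\eqdef \varphi(v)$, so that $\alpha(\gamma_v)=H_1(v,0)$ and $\omega(\gamma_v)=H_2(v,0)$ by commutativity. Using the finiteness of $L$, set $A\eqdef \min_{v\in V(L)}\gamma_v^-$ and $B\eqdef \max_{v\in V(L)}\gamma_v^+$, and define for each vertex $(v,i)$ of $L\times I_m$ the Moore path
\[
\widetilde{H}(v,i)(k)=\begin{cases} H_1(v,i) & k\leq A-i,\\ H_1(v,A-k) & A-i\leq k\leq A,\\ \gamma_v(k) & A\leq k\leq B,\\ H_2(v,k-B) & B\leq k\leq B+i,\\ H_2(v,i) & k\geq B+i.\end{cases}
\]
Because $A\leq \gamma_v^-$ and $B\geq \gamma_v^+$ one has $\gamma_v(A)=H_1(v,0)$ and $\gamma_v(B)=H_2(v,0)$, so the five pieces are consistent at their overlaps. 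A direct check shows that each $\widetilde{H}(v,i)$ is a Moore path in $K$ with initial vertex $H_1(v,i)$ and final vertex $H_2(v,i)$, that the two ramps degenerate at $i=0$ to give $\widetilde{H}(v,0)=\gamma_v$, and that $\pi\circ \widetilde{H}=H$.

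The essential verification is that $\widetilde{H}$ is simplicial. Given a simplex $\tau\times \{j,j+1\}$ of $L\times I_m$, this amounts to showing that for every integer $k$ the set
\[
S_k\eqdef \bigcup_{v\in\tau,\ i\in\{j,j+1\}}\bigl\{\widetilde{H}(v,i)(k),\widetilde{H}(v,i)(k+1)\bigr\}
\]
is a simplex of $K$. I would split into cases according to the position of $k$ relative to $A-j-1$, $A$, $B$ and $B+j+1$. In the left-ramp range, $S_k$ reduces to $H_1(\tau\times\{l,l+1\})$ for some $l\in \{0,\dots,j\}$, which is a simplex since $H_1$ is simplicial; the right-ramp range is symmetric with $H_2$; in the central range $A\leq k\leq B-1$ every contribution collapses to $\gamma_v(k)$ or $\gamma_v(k+1)$ irrespective of $i$, so $S_k=\{\gamma_v(k),\gamma_v(k+1)\colon v\in\tau\}$, and this is a simplex of $K$ because $\{\gamma_v\colon v\in\tau\}=\varphi(\tau)$ is a simplex of $\PP K$ by simpliciality of $\varphi$. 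The few transitional values $k\in\{A-1,A,B-1,B\}$ reduce to one of the preceding three cases by the boundary identities $\gamma_v(A)=H_1(v,0)$ and $\gamma_v(B)=H_2(v,0)$.

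The main obstacle, and the precise reason that $\pi$ is only asserted to be a \emph{finite}-fibration, is the choice of a uniform ramp position $A$ (and $B$) shared by every vertex of $L$. The more obvious attempt, placing the left ramp of $\widetilde{H}(v,i)$ at $k=\gamma_v^-$ so that $\widetilde{H}(v,i)$ is the concatenation of $\gamma_v$ with short walks along the $H_1$- and $H_2$-values at its own endpoints, breaks simpliciality: at a $k$ where one vertex $v_1\in \tau$ still lies in its own left ramp (contributing some $H_1(v_1,i')$ with $i'>0$) while another $v_2\in \tau$ has already entered its central range (contributing $\gamma_{v_2}(k)$), neither the simplex hypothesis on $H_1$ nor that on $\varphi$ forces this mixture to span a simplex of $K$. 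Using the common value $A=\min_v\gamma_v^-$ synchronizes the regimes so that at every $k$ all vertices of $\tau$ lie in the same regime simultaneously, and $S_k$ is then controlled by exactly one of the two given hypotheses. This minimum only exists when $L$ is finite, which is why the result is formulated as a simplicial finite-fibration and does not extend to arbitrary simplicial complexes.
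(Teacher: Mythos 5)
Your construction is exactly the one in the paper: the paper also uses finiteness of $L$ to choose a common support $[a,b]$ with $a=\min_v\gamma_v^-$, $b=\max_v\gamma_v^+$, and defines the lift by the same five-case formula (there written as the concatenation of the reversed truncated $H_1$-path, $\varphi(v)$, and the truncated $H_2$-path), then verifies simpliciality by the same kind of case analysis on the position of $k$ relative to the ramps. So the proposal is correct and follows essentially the same route, including the observation that the uniform ramp position is precisely where finiteness of $L$ enters.
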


\begin{proof}
Let $L$ be a finite simplicial complex, $m\geq 1$, and a
commutative diagram of simplicial maps
$$\xymatrix{
{L} \ar[rr]^\varphi \ar@{^{(}->}[d]_{i_0^m} & & {\PP K} \ar[d]^{\pi=(\alpha,\omega) } \\
{L\times I_m} \ar[rr]_{(G,H)} & & {K\times K} }$$ \noindent We
recall that, as $L$ is finite, there exists a factorization
of $\varphi$ of the form
$$\xymatrix{
{L} \ar[rr]^\varphi  \ar[dr] & & \PP K \\
 & K^{[a,b]} \ar@{^{(}->}[ur] & }$$
Indeed, if $[a(v),b(v)]$ denotes the support of $\varphi (v)$ for
any vertex $v\in L$, then we may take $a=\min \{a(v)\colon v\in
L\}$ and $b=\max \{b(v)\colon v\in L\}$ due to the fact that $L$ is finite. We therefore obtain a
commutative diagram
$$\xymatrix{
{L} \ar[rr]^\varphi \ar@{^{(}->}[d]_{i_0^m} & & {K^{[a,b]}}
\ar[d]_{(\ev_a,\ev_b)} \ar@{^{(}->}[rr] & & {\PP K} \ar[d]^{\pi = (\alpha,\omega)} \\
{L\times I_m} \ar[rr]_{(G,H)} & & {K\times K} \ar@{=}[rr] & &
{K\times K} }$$ where $\ev_a$ and $\ev_b\colon
K^{[a,b]}\rightarrow K$ denote the evaluation simplicial maps at
$a$ and $b$, respectively. Our aim is to construct a simplicial
map
$$\Omega \colon L\times I_m\to K^{[a-m,b+m]}\subset \PP K$$
satisfying $\pi \circ \Omega =(G,H)$ and $\Omega \circ
i_0^m=\varphi$. Notice that there is natural inclusion $ K^{[a,b]}
\hookrightarrow K^{[a-m,b+m]}$.

\begin{equation}\label{diagram1}
\xymatrix{
{L} \ar@{^{(}->}[d]_{i_0^m} \ar[rr]^\varphi &&{K^{[a,b]}}
\ar@{^{(}->}[rr] && {K^{[a-m,b+m]}} \ar[d]^{(\ev_{a-m},\ev_{b+m})}\ar@{^{(}->}[rr] &&{\PP K} \ar[d]_{\pi } \\
%y&&y&&y&&y
{L\times I_m} \ar@{-->}[urrrr]^{\Omega } \ar[rrrr]_{(G,H)} & &
 & & {K\times K} \ar@{=}[rr] & & {K\times K}
}
\end{equation}

For this task, consider the simplicial maps
$\widehat{G},\widehat{H}\colon L\rightarrow K^{I_m}$ respectively
associated to $G,H\colon L\times I_m \to K$ from the natural
adjunction \eqref{biject}. Then, for any fixed vertex $v\in L$, we
have three Moore paths
\begin{align*}
\varphi(v)\colon& [a,b]\to K, \\
\widehat{G}(v)\colon& I_m\to K, \\
\widehat{H}(v)\colon& I_m\to K,
\end{align*}
satisfying
$\varphi(v)(a)=\widehat{G}(v)(0)$ and $\varphi(v)(b)=\widehat{H}(v)(0)$.

For any fixed $i\in I_m$ we may also consider the $i$-truncated Moore paths, $\widehat{G}(v)_i$ and
$\widehat{H}(v)_i$, whose supports are contained in $[0,i]$ (and
therefore in $[0,m]$), given by:
$$(\widehat{G}(v)_i)(j)=\begin{cases}\widehat{G}(v)(j), & \!\!0\leq j\leq i \\
\widehat{G}(v)(i), & \!\!i\leq j\leq m \end{cases}
\mbox{and}
\;(\widehat{H}(v)_i)(j)=\begin{cases}\widehat{H}(v)(j), & \!\!0\leq j\leq i \\
\widehat{H}(v)(i), & \!\!i\leq j\leq m\end{cases}
$$
Observe that
$\widehat{G}(v)_i$ starts at $\widehat{G}(v)(0)=\varphi(v)(a)$ and ends at
$\widehat{G}(v)(i)=G(v,i)$; moreover, when $i=0$ we obtain the
constant path at $G(v,0)$. Similarly, $\widehat{H}(v)_i$ starts at
$\widehat{H}(v)(0)=\varphi(v)(b)$ and ends at $\widehat{H}(v)(i)=H(v,i);$ we
also have that, when $i=0$, it is the constant path at $H(v,0)$.

Let us denote by $-\widehat{G}(v)_i$ the reverse of the normalized
Moore path of $\widehat{G}(v)_i$, that is,
$$-\widehat{G}(v)_i\eqdef \overline{|\widehat{G}(v)_i|}$$

In this way, $-\widehat{G}(v)_i$, $\varphi(v)$ and
$\widehat{H}(v)_i$ can be multiplied and its multiplication
$(-\widehat{G}(v)_i)*\varphi(v)*\widehat{H}(v)_i$ is a Moore path
starting at $G(v,i)$ and ending at $H(v,i)$, so that for $i=0$ it
equals  $\varphi(v)$. Such multiplication gives rise to the
desired simplicial map $\Omega $ by establishing the identity
$$\Omega
(v,i)(j)\eqdef ((-\widehat{G}(v)_i)*\varphi(v)*\widehat{H}(v)_i)(j).$$ Its explicit
expression is as follows:
$$\Omega (v,i)(j)=
\begin{cases}
G(v,i), & a-m\leq j\leq a-i \\
G(v,a-j), & a-i\leq j\leq a \\
\varphi(v)(j), & a\leq j\leq b \\
H(v,j-b), & b\leq j\leq b+i \\
H(v,i), & b+i\leq j\leq b+m.
\end{cases}$$
At this point we will prove that $\Omega $ is a simplicial map.
Taking into account the exponential law, this is equivalent to
prove that the following map is simplicial:
$$ \widetilde{\Omega}\colon L\times I_m \times [a-m,b+m] \to K, \quad (\sigma, i, j)\mapsto \Omega(\sigma,i)(j). $$
Now, given $\sigma\in L$, $i\in I_m$ and $j\in [a-m,b+m]$, we will prove that
$$\mu\eqdef\widetilde{\Omega}(\sigma\times \{i,i+1\}\times \{j,j+1\})$$
is a simplex in $K$. Taking into account that $\widetilde{\Omega}$
is piecewise defined, we have to consider the following cases:

\begin{itemize}
    \item If $i=0$ and $j=a-i$, then
    $$\mu=\widetilde{\Omega}(\sigma\times \{0,1\}\times \{a,a+1\})=\varphi(\sigma)(\{a,a+1\}).$$

    \item  If $i=0$ and $j=b+i$, then
    $$\mu=\widetilde{\Omega}(\sigma\times \{0,1\}\times \{b,b+1\})=H(\sigma\times \{0,1\}.$$

    \item If $i> 0$ and $j=a-i$, then
    $\mu=G(\sigma\times \{i-1,i\})$.

    \item If $i> 0$ and $j=a-1$, then
    $\mu=G(\sigma\times \{0,1\})$.

     \item If $i> 0$ and $j=b$, then
     $\mu=H(\sigma\times \{0,1\})$.

     \item If $i> 0$ and $j=b+i$, then $\mu=H(\sigma\times \{i,i+1\})$.
\end{itemize}

Since $\varphi$, $G$ and $H$ are simplicial maps, in all cases we
obtain a simplex in $K$ and hence $\widetilde{\Omega}$ is a
simplicial map.

Finally, it is not difficult to check that $\Omega $ satisfies the
commutativity in Diagram (\ref{diagram1}).
\end{proof}

\subsection{$P$-homotopy}

The maps $\alpha$ and $\omega$ of Definition \ref{alpha} allow us
to introduce the following notion of homotopy:

\begin{definition}\label{P-homotop}

Given $f,g\colon K\rightarrow L$ simplicial maps, we will say that $f$
is \emph{P-homotopic} to $g$, denoted by $f\simeq g$, when there
exists a simplicial map
$$H\colon K\to \PP L$$
such that $\alpha \circ H=f$ and $\omega \circ H=g$.

\end{definition}

This relation is certainly reflexive and symmetric but presumably
non transitive (see \cite[p. 123]{G}). More is true, it is compatible with left and right compositions.

%We will use the following notion:

%\begin{remark}
%In general, $\sim $ and $\simeq $ do not agree. For instance, the
%simplicial map $F:\mathbb{N}\rightarrow P(\mathbb{N})$ given as
%$$F(i)(j)=\begin{cases}
%j, & j\leq i \\
%i, & j\geq i
%\end{cases}$$
%\noindent proves that the simplicial maps $1_Z:Z\rightarrow Z$ and
%\end{remark}

\begin{definition}\label{Pequiv}
Let $f\colon K\rightarrow L$ be a simplicial map. Then $f$ is said
to be a \emph{$P$-homotopy equivalence} if there exists a
simplicial map $g\colon L\rightarrow K$ such that $g\circ f\simeq
1_K$ and $f\circ g\simeq 1_L$.
\end{definition}

Taking into account the links between strong homotopy type and contiguity classes established by Barmak and Minian in \cite{LIBROBARMAK} and \cite{B-M}, notice that if $P$-homotopies of the above definition are switched by finite sequences of contiguous maps, then we conclude that $f$ is a strong equivalence. 

An alternative equivalent form for the notion of  $P$-homotopy is
given the following result:

\begin{proposition}
Let $f,g\colon K\rightarrow L$ be simplicial maps. Then $f\simeq
g$ if and only if there exists a sequence of simplicial maps
$\{f_i\colon K\rightarrow L\}_{i\in \mathbb{Z}}$ indexed by the
integer numbers, such that:
\begin{enumerate}
\item[(i)] $f_i\sim _c f_{i+1}$ are contiguous maps;
\item[(ii)] For all vertex $v\in K$ there exist integers $n_v$ and $m_v$ such
that $f_i(v)=f(v),$ for all $i\leq n_v$ and $f_i(v)=g(v)$ for all
$i\geq m_v$.
\end{enumerate}
\end{proposition}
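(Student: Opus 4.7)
The plan is to exploit the natural bijection between simplicial maps $H\colon K\to \PP L$ and integer-indexed families of simplicial maps $\{f_i\colon K\to L\}_{i\in\mathbb{Z}}$, obtained by evaluating the Moore paths $H(v)$ at each integer. Concretely, one direction will send a $P$-homotopy $H$ to the family $f_i(v)\eqdef H(v)(i)$, and the reverse will send a family $\{f_i\}$ to the vertex map $H(v)(i)\eqdef f_i(v)$. The body of the proof consists of checking that these assignments respect simpliciality, the Moore-path condition, and the endpoint conditions encoded by $\alpha$ and $\omega$.

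For the direction ($\Rightarrow$), I would start from a simplicial map $H\colon K\to \PP L$ with $\alpha\circ H=f$ and $\omega\circ H=g$, and set $f_i(v)\eqdef H(v)(i)$. For every simplex $\sigma\in K$ the set $\{H(v)\}_{v\in\sigma}$ is a simplex of $\PP L$, so evaluating it on the $0$-simplex $\{i\}\subset Z$ shows that $f_i(\sigma)$ is a simplex of $L$, while evaluating on the $1$-simplex $\{i,i+1\}\subset Z$ gives $f_i(\sigma)\cup f_{i+1}(\sigma)\in L$, i.e.\ $f_i\simc f_{i+1}$. The integers $n_v,m_v$ of condition (ii) can then be read off from the Moore-path structure: take $n_v=H(v)^{-}$ and $m_v=H(v)^{+}$, and use $\alpha(H(v))=H(v)(H(v)^{-})=f(v)$ and $\omega(H(v))=H(v)(H(v)^{+})=g(v)$ together with eventual constancy on each side.

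For the converse ($\Leftarrow$), I would define $H\colon V(K)\to V(\PP L)$ by $H(v)(i)\eqdef f_i(v)$ and verify the conditions in order: (a) each $H(v)\colon Z\to L$ is simplicial, because $\{f_i(v),f_{i+1}(v)\}$ lies in $f_i(\{v\})\cup f_{i+1}(\{v\})$, which is a simplex by (i); (b) each $H(v)$ is a Moore path, directly from (ii); (c) $H$ is simplicial as a map $K\to \PP L$, by reusing simpliciality of each $f_i$ on the $0$-simplices of $Z$ and the contiguities $f_i\simc f_{i+1}$ on the $1$-simplices of $Z$; and (d) $\alpha\circ H=f$ and $\omega\circ H=g$, which reduces to noting that $n_v$ is a valid left-witness in the definition of $H(v)^{-}$, so $\alpha(H(v))=H(v)(H(v)^{-})=H(v)(n_v)=f(v)$, and dually on the right.

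The main obstacle is purely bookkeeping: one must carefully match the extremal indices $H(v)^{\pm}$ used to define $\alpha$ and $\omega$ against the constants $n_v,m_v$ provided by the hypothesis, and treat the degenerate case where $H(v)$ is constant (so that $H(v)^{-}=0=H(v)^{+}$ by convention and necessarily $f(v)=g(v)$). Once these identifications are made the verifications are immediate from the definitions of a simplex in $L^Z$ and of a Moore path, with no genuine computation involved.
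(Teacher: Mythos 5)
Your proposal is correct and follows essentially the same route as the paper: both directions use exactly the correspondence $f_i(v)=H(v)(i)$, with (i) coming from evaluating simplices of $\PP L$ on the edges $\{i,i+1\}$ of $Z$ and (ii) from the eventual-constancy (Moore path) structure together with $\alpha\circ H=f$, $\omega\circ H=g$. Your write-up merely spells out the verifications (and the constant-path convention) that the paper leaves as ``straightforward to check.''
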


\begin{proof}
If $H\colon K \to \PP L$ is a homotopy between $f$ and $g$ then we
define the simplicial map $f_i\eqdef H(-)(i)\colon K\to L$.
Obviously, as $H$ is simplicial and $\{i,i+1\}$ is a simplex in
$\mathbb{Z}$, we have that $f_i$ and $f_{i+1}$ are contiguous, so condition
(i) holds. Moreover, since for any $v\in K$ we have that $H(v)\in
\PP L$ is a Moore path with support $[n_v,m_v]$, condition (ii) is
also fulfilled as $\alpha \circ H=f$ and $\omega \circ H=g$.

Conversely, given a sequence of simplicial maps
$\{f_i\colon K\to L\}_{i\in \mathbb{Z}}$ satisfying (i) and (ii)
we define $H\colon K \to \PP L$ as
$$H(v)(i)\eqdef f_i(v)$$
It is straightforward to check from (i) that $H$ is simplicial
and from (ii) that $\alpha\circ H=f$ and $\omega \circ H=g$.
\end{proof}

%It is not difficult to see that, given $K$ and $L$ simplicial
%complexes, the $\PP$-homotopy is an equivalence relation on the set
%$\mathbf{SC}(K,L)$, which is compatible with left and
%right compositions.

Now we will see the relationship between the homotopy $\simeq $ and the
class of contiguity $\sim $. It is clear that Proposition
\ref{MINIANCONT} can be rewritten as follows: $f$ and $g$ are in
the same class of contiguity if and only if there exist integers
$a\leq b$ and a simplicial map $H\colon K\times [a,b]\rightarrow
L$ such that $H(v,a)=f(v)$ and $H(v,b)=g(v),$ for all vertex $v\in
K$.

\begin{proposition}\label{PSAME}
Let $f,g\colon K\rightarrow L$ be simplicial maps. Then
\begin{enumerate}
\item[(i)] If $f\sim g$,
then $f\simeq g$.

\item[(ii)] If $K$ is finite and $f\simeq g$, then $f\sim g$.
\end{enumerate}
In particular, if
$K$ is finite, then $f$ and $g$ are in the
same contiguity class, $f\sim g$, if and only if they are
$\PP$-homotopic, $f\simeq g$.
\end{proposition}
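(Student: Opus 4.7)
My plan is to deduce both parts directly from the preceding proposition, which characterizes $P$-homotopy via bi-infinite sequences of pairwise contiguous maps that are eventually equal to $f$ on the left and to $g$ on the right (at each vertex).

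For part (i), the plan is to start with a contiguity chain $f=\varphi_0\sim_c\varphi_1\sim_c\cdots\sim_c\varphi_m=g$ witnessing $f\sim g$, and extend it constantly to a bi-infinite sequence $\{f_i\}_{i\in\mathbb{Z}}$ by setting $f_i=f$ for $i\leq 0$, $f_i=\varphi_i$ for $0\leq i\leq m$, and $f_i=g$ for $i\geq m$. Consecutive maps are contiguous (trivially outside the interval $[0,m]$, and by hypothesis inside it), and taking $n_v=0$ and $m_v=m$ for every vertex $v$ verifies condition (ii) of the preceding proposition. That proposition then gives $f\simeq g$.

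For part (ii), the plan is to use finiteness of $V(K)$ to convert the pointwise bounds $n_v,m_v$ into global ones. Starting from a $P$-homotopy, the preceding proposition produces a bi-infinite sequence $\{f_i\}$ of simplicial maps with $f_i\sim_c f_{i+1}$ and pointwise stabilization at $f$ (resp.\ $g$) outside $[n_v,m_v]$. Since $V(K)$ is finite, set
\[
n\eqdef \min_{v\in V(K)} n_v, \qquad M\eqdef \max\!\left(n,\;\max_{v\in V(K)} m_v\right).
\]
Then $n\leq M$, and for every vertex $v$ one has $n\leq n_v$, hence $f_n(v)=f(v)$; similarly $M\geq m_v$, hence $f_M(v)=g(v)$. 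Therefore $f_n=f$ and $f_M=g$, and the finite chain $f=f_n\sim_c f_{n+1}\sim_c\cdots\sim_c f_M=g$ exhibits $f\sim g$. The final clause of the statement follows by combining (i) and (ii).

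The only subtle point is the possibility that $\max_v m_v < n$, which is why I incorporate $n$ into the definition of $M$; this guarantees $n\leq M$ so that the truncated chain is genuinely a contiguity chain in the sense of Definition \ref{MSTEPS}. Apart from this bookkeeping, both implications are essentially formal once the bi-infinite reformulation of $P$-homotopy is in hand, so I do not anticipate a serious obstacle.
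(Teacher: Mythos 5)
Your proof is correct. It takes a slightly different route from the paper's: you derive both implications formally from the preceding proposition (the characterization of $P$-homotopy by a bi-infinite sequence $\{f_i\}_{i\in\mathbb{Z}}$ of pairwise contiguous maps with pointwise stabilization), padding a finite contiguity chain by constants for (i) and truncating the bi-infinite sequence at uniform indices $n=\min_v n_v$ and $M=\max(n,\max_v m_v)$ for (ii). The paper instead argues directly with the Moore path complex: for (i) it takes the Minian homotopy $H\colon K\times[a,b]\to L$, applies the adjunction \eqref{biject} to get $K\to L^{[a,b]}$, and composes with the inclusion $L^{[a,b]}\subset \PP L$; for (ii) it uses finiteness of $K$ to bound the supports of the paths $G(v)$, factors $G\colon K\to\PP L$ through $L^{[a,b]}$ with $a=\min_v a_v$, $b=\max_v b_v$, and adjoints back to a homotopy $K\times[a,b]\to L$. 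The underlying finiteness argument (uniformizing the pointwise bounds by a min/max over the finitely many vertices) is the same in both proofs; what your version buys is that it avoids invoking the exponential law again, at the cost of leaning on the sequence characterization, which the paper itself proves by essentially the adjunction argument. Your care in forcing $n\leq M$ via $M=\max(n,\max_v m_v)$ handles the only degenerate case correctly (if $n=M$ then $f=g$ and the conclusion is trivial), so there is no gap.
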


\begin{proof}
First consider $H\colon K\times [a,b]\rightarrow L$ such that
$H(v,a)=f(v)$ and $H(v,b)=g(v)$. Taking the composition of the
adjoint simplicial map $K\to L^{[a,b]}$, given by the bijection
\eqref{biject}, with the inclusion $L^{[a,b]}\subset \PP L$, we
obtain a simplicial map $H'\colon K\to \PP L$ satisfying
$\alpha\circ H'=f$ and $\omega \circ H'=g$.

Conversely, suppose that $K$ is finite and consider a simplicial
map $G\colon K\to \PP L$ satisfying $\alpha \circ G=f$ and $\omega
\circ G=g$. For any $v\in K$ we have that $G(v)$ is a Moore path in
$L$ with support $[a_v,b_v].$ Taking $a=\min \{a_v\colon v\in K\}$
and $b=\max \{b_v\colon v\in K\}$ we obtain a factorization
$$\xymatrix{
{K} \ar[rr]^G \ar[rd] & & {\PP L} \\
 & {L^{[a,b]}} \ar@{^{(}->}[ur] &
}$$ Considering the adjoint of $K\to L^{[a,b]}$ we obtain a
simplicial map $G'\colon K\times [a,b]\to L$ such that
$G'(v,a)=f(v)$ and $G'(v,b)=g(v),$ for all $v\in K$. Hence, this
means that $f\sim g$.
\end{proof}

We have the following immediate result:

\begin{corollary}
Let $f\colon K\rightarrow L$ be a simplicial map between finite
simplicial complexes. Then $f$ is a $P$-homotopy equivalence if
and only if $f$ is a strong equivalence.
\end{corollary}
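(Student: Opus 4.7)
The plan is to derive this corollary directly from Proposition \ref{PSAME}, since that proposition already establishes the equivalence of $\sim$ and $\simeq$ on finite complexes, and "strong equivalence" of finite complexes is characterized (via the Barmak--Minian proposition cited just after the B-M reference) by the existence of simplicial maps $\varphi,\psi$ with $\psi\circ\varphi\sim 1_K$ and $\varphi\circ\psi\sim 1_L$. So the corollary should reduce to matching up the two notions of ``homotopy inverse'' through Proposition \ref{PSAME}.

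For the forward direction, I would assume $f$ is a $P$-homotopy equivalence, so there exists a simplicial map $g\colon L\to K$ with $g\circ f\simeq 1_K$ and $f\circ g\simeq 1_L$. Since both $K$ and $L$ are finite, Proposition \ref{PSAME}(ii) applies to each of these $P$-homotopies (the source is $K$ in the first and $L$ in the second, both finite), yielding $g\circ f\sim 1_K$ and $f\circ g\sim 1_L$. This is exactly the characterization of a strong equivalence between $K$ and $L$.

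For the converse, I would assume $f$ is a strong equivalence with homotopy inverse $g\colon L\to K$ satisfying $g\circ f\sim 1_K$ and $f\circ g\sim 1_L$. Applying Proposition \ref{PSAME}(i) to both relations (which does not need the finiteness hypothesis at all) gives $g\circ f\simeq 1_K$ and $f\circ g\simeq 1_L$, so $f$ is a $P$-homotopy equivalence in the sense of Definition \ref{Pequiv}.

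There is essentially no obstacle: the content is entirely packaged in Proposition \ref{PSAME}, and the corollary amounts to observing that a homotopy inverse in one setting automatically serves as a homotopy inverse in the other, thanks to the finiteness of both $K$ and $L$ (which ensures that the sources of all four composite maps---$g\circ f$, $1_K$, $f\circ g$, $1_L$---are finite, so the conversion from $\simeq$ to $\sim$ is available). I would include a brief remark explicitly citing the B-M characterization to make clear that ``strong equivalence'' here is being used in the sense of a simplicial map admitting such a contiguity inverse.
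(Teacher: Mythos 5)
Your proof is correct and follows exactly the route the paper intends: the paper labels this corollary ``immediate'' from Proposition \ref{PSAME}, and your argument---applying part (ii) to $g\circ f\simeq 1_K$ and $f\circ g\simeq 1_L$ (both sources finite) for one direction and part (i) for the other, together with the Barmak--Minian characterization of strong equivalences via contiguity inverses---is precisely that intended argument. No gaps.
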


%!TEX root = main.tex

\section{Homotopy fiber theorem}
\label{GENFIBER}

Recall that for any natural number $n$, $I_n$ is the full subcomplex of $\mathbb{Z}$ generated by
all vertices $0\leq i\leq n$. If $n^\prime\geq n$, then a
\emph{subdivision map} $t\colon I_{n^\prime}\rightarrow I_n$ is any simplicial map satisfying $t(0)=0$ and $t(n^\prime)=n$.
The proof of the following Lemma can be found in \cite{M2005}.

\begin{lemma}\label{minian}
Given natural numbers $n, m$, there exist $n^\prime, m^\prime$ with
$n^\prime \geq n$ and $m^\prime \geq m$ and a simplicial map $\phi
\colon I_{n^\prime }\times I_{m^\prime }\rightarrow I_n\times I_m$
with the following sketches on the boundaries:

\begin{center}
\includegraphics[scale=0.5]{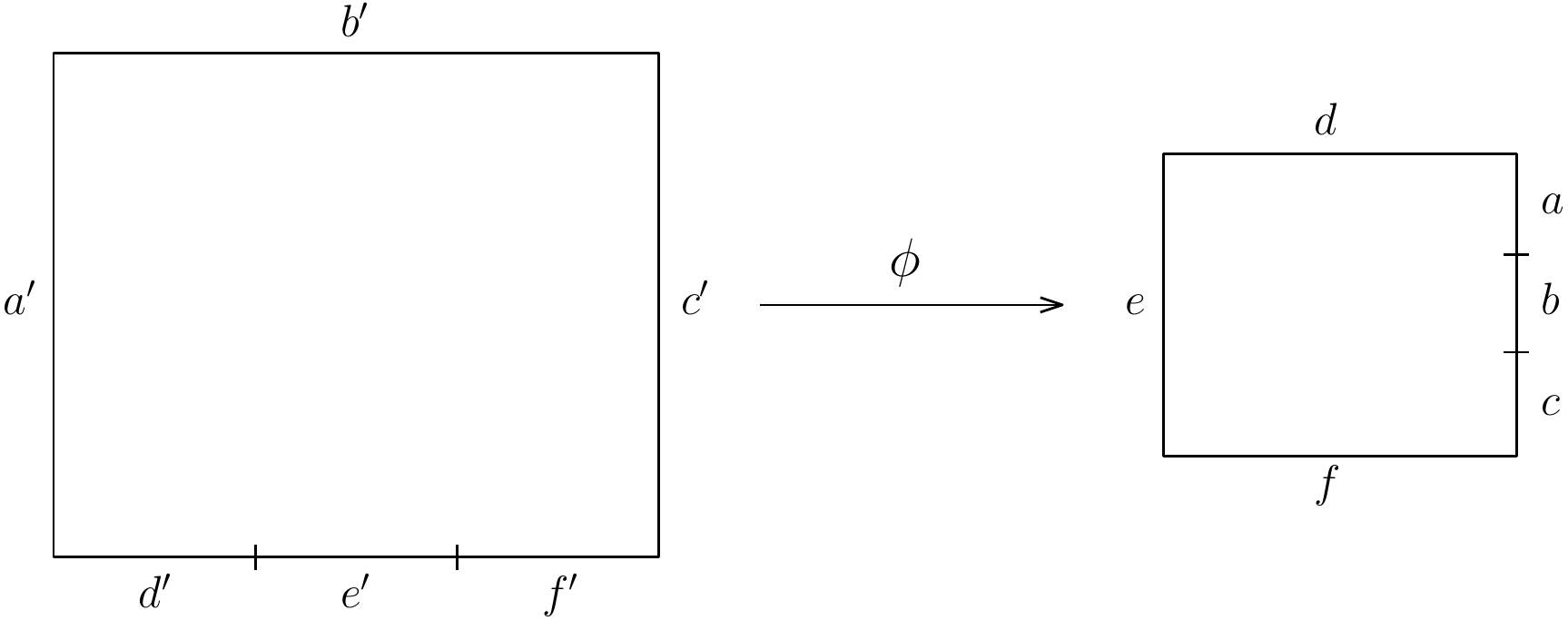}  \\
\end{center}
\bigskip

\noindent where $a^\prime \rightarrow a,$ $b^\prime \rightarrow
b$, etc. are subdivision maps. Moreover, there exist $n^{\prime
\prime}\geq n^\prime$, $m^{\prime\prime}\geq m^\prime $ and a
simplicial map $\phi^\prime \colon I_{n^{\prime\prime}}\times
I_{m^{\prime\prime}}\rightarrow I_{n^\prime }\times I_{m^\prime }$
with the opposite sketches on the boundaries such that the
composition
$$\phi \circ \phi^\prime \colon I_{n^{\prime\prime}}\times
I_{m^{\prime\prime}}\rightarrow I_{n}\times I_{m}$$ has the form
$\phi \circ \phi^\prime =t_1\times t_2,$ where $t_1\colon
I_{n^{\prime\prime}}\rightarrow I_n$ and $t_2\colon
I_{m^{\prime\prime}}\rightarrow I_m$ are subdivision maps.
\end{lemma}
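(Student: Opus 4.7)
The plan is to construct $\phi$ and then $\phi'$ explicitly by prescribing them as vertex maps that realize the required boundary sketches, and to extend these definitions to the interior by a coordinate-wise interpolation. Recall first that a $2$-simplex of the categorical product $I_k\times I_l$ consists of three vertices whose coordinates in each factor span a simplex of that factor; hence a vertex map $\psi\colon I_{k'}\times I_{l'}\to I_k\times I_l$ is simplicial precisely when, for any two vertices $(p_1,q_1),(p_2,q_2)$ with $|p_1-p_2|\leq 1$ and $|q_1-q_2|\leq 1$, the images $\psi(p_1,q_1)$ and $\psi(p_2,q_2)$ again have both coordinates differing by at most one in $I_n\times I_m$.

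For the first map, I would take $n'$ and $m'$ large enough that the subdivided segments displayed in the sketch fit along the four edges of $\partial(I_{n'}\times I_{m'})$, with the four corner values matching consistently. This prescribes $\phi$ on the boundary vertices. To extend $\phi$ over the interior, I would use a coordinate-wise interpolation: for each interior vertex $(p,q)$, set the first coordinate of $\phi(p,q)$ to be a non-decreasing interpolation between the values prescribed on the two boundary edges along which $p$ is held fixed, and symmetrically for the second coordinate. Because each boundary subdivision map changes by at most one across a unit edge, this interpolation is simplicial: any two adjacent grid vertices map to vertices whose coordinates differ by at most one in each factor.

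The construction of $\phi'$ is analogous, with boundary sketches chosen as the \emph{opposites} of those for $\phi$, so that on each of the four boundary edges the composition $\phi\circ\phi'$ restricts to a single subdivision map in one coordinate and is constant in the other. This forces choices of $n''\geq n'$ and $m''\geq m'$ large enough to absorb the further refinement of each subdivided segment. Once the interior of $\phi'$ is again filled in by coordinate-wise interpolation, one verifies the identity $\phi\circ\phi'=t_1\times t_2$ by checking that, for each interior vertex $(p,q)$ of $I_{n''}\times I_{m''}$, the first coordinate of $\phi(\phi'(p,q))$ depends only on $p$ and the second only on $q$.

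The main obstacle is precisely this final factorization. Ensuring the correct boundary behaviour of $\phi$ and $\phi'$ is a routine matching of subdivision data, but guaranteeing that the interpolation rules used in the \emph{interiors} mesh so that $\phi\circ\phi'$ loses all cross-coordinate dependence requires a careful choice of the interior values of $\phi'$: each interior vertex of $I_{n''}\times I_{m''}$ must be sent by $\phi'$ to a vertex whose image under $\phi$ is already of product form. Organizing the bookkeeping of refinements, and in particular picking $n''$ and $m''$ as suitably large common multiples of the subdivision lengths appearing in the definition of $\phi$, closes the argument.
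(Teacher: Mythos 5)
Your proposal never actually proves the lemma; note first that the paper itself gives no proof either, but simply cites Minian \cite{M2005}, where the maps $\phi$ and $\phi'$ are constructed explicitly, so the only thing to judge is whether your construction stands on its own. Your simpliciality criterion for vertex maps between categorical products of intervals (adjacent grid vertices, including diagonal neighbours, must go to vertices whose coordinates differ by at most one in each factor) is correct, and the overall strategy of prescribing boundary data and filling in the interior is the right kind of idea. But the two points where the lemma actually lives are exactly the ones you leave open. First, the claim that a ``coordinate-wise interpolation'' of the boundary prescriptions is simplicial is asserted, not verified: without an explicit formula (and without even fixing what the boundary sketches are, since you never pin down the figure's data), one cannot check the Lipschitz-type condition, and a generic interpolation between two boundary rows with different subdivision patterns can fail it unless the grid is refined and the interpolation chosen with care.

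Second, and more seriously, the factorization $\phi\circ\phi'=t_1\times t_2$ is the whole content of the statement, and you yourself identify it as ``the main obstacle'' and then close it by appealing to ``a careful choice of the interior values of $\phi'$'' and ``bookkeeping'' that are never carried out. Saying that each interior vertex of $I_{n''}\times I_{m''}$ ``must be sent by $\phi'$ to a vertex whose image under $\phi$ is already of product form'' is a restatement of what has to be achieved, not a construction: for this one needs explicit interior formulas for $\phi$ (so that one knows which vertices have product-form image) and a compatible explicit $\phi'$, together with the verification that this $\phi'$ is itself simplicial and has the ``opposite'' boundary sketches. Choosing $n'',m''$ as common multiples does not by itself produce such a $\phi'$. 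As written, the proposal is a plan whose crucial step is deferred, so there is a genuine gap; to repair it you would have to write down the maps concretely (for instance, as in Minian's proof, by assembling them from folding and degenerate pieces on subrectangles) and check both simpliciality and the product identity on every unit square of the domain.
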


Another technical lemma is the so-called \emph{Simplicial Pasting
Lemma}, whose proof can be found in \cite{Sc-Sw}.

\begin{lemma}\label{pasting}
Let $U,V$ be subcomplexes of a simplicial complex $K$ and let $f
\colon U \to L$, $g\colon V \to L$ be simplicial maps such that
$f(v)=g(v),$ for all vertex $v\in U\cap V$.  Then, the vertex
function $h\colon U\cup V\to L,$ defined as
$$h(v)=\begin{cases}
f(v), & {\rm if\ }v\in U, \\
g(v), & {\rm if\ }v\in V,
\end{cases}$$
is a simplicial map.
\end{lemma}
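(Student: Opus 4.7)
The plan is to argue this lemma directly from the definition of the union of subcomplexes, since the statement is essentially combinatorial. The main idea is that every simplex of $U \cup V$ already lies entirely inside $U$ or entirely inside $V$, so the simplicial condition transfers from $f$ or $g$ to $h$ with no extra work.

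First, I would check that $h$ is a well-defined vertex map $V(U \cup V) \to V(L)$. The only issue could arise at a vertex $v$ lying in both $V(U)$ and $V(V)$, i.e. a vertex of $U \cap V$; but there the compatibility hypothesis $f(v) = g(v)$ ensures that both clauses of the definition agree. Hence $h$ is unambiguously defined on all of $V(U \cup V) = V(U) \cup V(V)$.

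Next, I would verify the simplicial condition. Let $\sigma \in U \cup V$. By the very definition of the union of two subcomplexes of $K$, this means $\sigma \in U$ or $\sigma \in V$. In the first case, every vertex of $\sigma$ belongs to $V(U)$, so $h$ coincides with $f$ on those vertices and $h(\sigma) = f(\sigma) \in L$, because $f$ is a simplicial map. The second case is identical with $g$ in place of $f$. This is exactly the content of Lemma \ref{lemacontig}-style reasoning, reduced to a case split.

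I do not really expect any obstacle: the subtlety, if any, is only to make the reader notice that ``a simplex of $U \cup V$'' does \emph{not} in general mean a subset of $V(U) \cup V(V)$ forming a simplex of $K$, but rather a simplex already belonging to at least one of $U$, $V$. Once that is recognised, the argument is a two-line case distinction, and the hypothesis $f|_{U \cap V} = g|_{U \cap V}$ is needed solely to guarantee that $h$ is a single-valued function at vertices lying in both subcomplexes.
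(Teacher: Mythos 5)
Your proof is correct. The paper gives no argument of its own for Lemma \ref{pasting} (it only cites \cite{Sc-Sw}), and your argument is exactly the standard one: the hypothesis $f|_{U\cap V}=g|_{U\cap V}$ makes $h$ single-valued on vertices, and since a simplex of the union $U\cup V$ of two subcomplexes is, by definition, already a simplex of $U$ or a simplex of $V$, one gets $h(\sigma)=f(\sigma)\in L$ or $h(\sigma)=g(\sigma)\in L$ by a two-case split. Your cautionary remark is also well placed: the lemma would fail if one misread ``$U\cup V$'' as the full subcomplex of $K$ spanned by $V(U)\cup V(V)$ (a simplex of $K$ with some vertices only in $U$ and others only in $V$ need not have a simplicial image), so identifying the union as the set-theoretic union of the two simplex collections is precisely the point that makes the argument go through.
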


Given $v,v^\prime  $ vertices in a simplicial complex $K$ and a Moore path $\gamma $
joining $v$ and $v^\prime  $, that is, $\alpha(\gamma)=v$ and
$\omega(\gamma)=v^\prime$, let us consider the Moore path
$\widehat{\gamma}\eqdef |\overline{\gamma }|,$ that is the normalized
of the reverse Moore path of $\gamma $. It is plain to check that
the simplicial map $H\colon I_{2n}\times I_n\to B$ defined as
$$
H(i,j)\eqdef \begin{cases} \gamma (\max \{n-i,j\}), & 0\leq i\leq n, \\
\gamma (\max \{i-n,j\}), & n\leq i\leq 2n,
\end{cases}
$$
\noindent gives a homotopy $H$ between $\widehat{\gamma }*\gamma$ and
the constant Moore path $c_{b^\prime  }$ relative to $\{0,n\}$.
Similarly, one can also check that $\gamma *\widehat{\gamma }\sim c_b$
rel. $\{0,n\}$.

Using this language, a simplicial complex $B$ is \emph{connected}
if and only for any pair of vertices $b,b^\prime  \in B$ there
exists a normalized Moore path $\gamma \colon [0,n]\to B$ such
that $\gamma (0)=b$ and $\gamma (n)=b^\prime  $.

Given a simplicial fibration $f\colon E\to B$ and a vertex $b_0\in
B$, \emph{the simplicial fiber of $p$ over $b_0$}, denoted by
$F_{b_0}$, is the full subcomplex of $E$ generated by all the
vertices $e\in E$ such that $p(e)=b_0$. In other words,
$F_{b_0}=p^{-1}(b_0)$.

The following theorem is one of the main results of this paper,
since it shows that our notion of simplicial fibration has nice
properties like the homotopy invariance of the fiber.
\begin{theorem}\label{PrincipalFibra}
Let $p\colon E\to B$ be a simplicial fibration where $B$ is a
connected simplicial complex. Then any two simplicial fibers of
$p$ have the same strong homotopy type.
\end{theorem}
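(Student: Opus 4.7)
The plan is to mimic the classical topological argument via lifting along paths, working in the Minian cylinder formulation of Definition~\ref{DEF3}. Fix vertices $b_0,b_1\in B$ and, using connectedness of $B$, choose a normalized Moore path $\gamma\colon I_n\to B$ from $b_0$ to $b_1$ with normalized reverse $\widehat{\gamma}$. Define a simplicial map $H_\gamma\colon F_{b_0}\times I_n\to B$ by $H_\gamma(x,i):=\gamma(i)$; the inclusion $F_{b_0}\hookrightarrow E$ lifts the slice at $i=0$, so the fibration property produces $\widetilde{H}_\gamma\colon F_{b_0}\times I_n\to E$ with $\widetilde{H}_\gamma(x,0)=x$, and I set $L_\gamma(x):=\widetilde{H}_\gamma(x,n)$. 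Because $p\widetilde{H}_\gamma(x,n)=b_1$ and $F_{b_1}$ is a full subcomplex of $E$, the map $L_\gamma$ takes values in $F_{b_1}$. Symmetrically, the reverse path $\widehat{\gamma}$ yields $L_{\widehat{\gamma}}\colon F_{b_1}\to F_{b_0}$.

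The heart of the argument is to verify $L_{\widehat{\gamma}}\circ L_\gamma\sim 1_{F_{b_0}}$ and the symmetric relation; then the Barmak--Minian characterization of strong homotopy type cited in Section~\ref{S2} delivers $F_{b_0}\sim F_{b_1}$. I will first use the Simplicial Pasting Lemma~\ref{pasting} to glue $\widetilde{H}_\gamma$ and the map $(x,i)\mapsto\widetilde{H}_{\widehat{\gamma}}(L_\gamma(x),i-n)$ along $F_{b_0}\times\{n\}$, obtaining a simplicial lift $\Phi\colon F_{b_0}\times I_{2n}\to E$ of the concatenated path $\gamma\ast\widehat{\gamma}$, with $\Phi(x,0)=x$ and $\Phi(x,2n)=L_{\widehat{\gamma}}(L_\gamma(x))$. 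Next, I take the explicit homotopy $H_0\colon I_{2n}\times I_n\to B$ exhibited just before the theorem, which shows $\gamma\ast\widehat{\gamma}\sim c_{b_0}$ relative to the endpoints, and promote it by projection to $H_0^\ast\colon (F_{b_0}\times I_{2n})\times I_n\to B$, $(x,i,j)\mapsto H_0(i,j)$. Applying Definition~\ref{DEF3} with $K=F_{b_0}\times I_{2n}$ and initial lift $\Phi$ furnishes $\widetilde{H}\colon F_{b_0}\times I_{2n}\times I_n\to E$ satisfying $\widetilde{H}(-,-,0)=\Phi$ and $p\circ\widetilde{H}=H_0^\ast$.

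The crucial observation is that $H_0^\ast$ is constantly $b_0$ on each of the three sides $\{i=0\}$, $\{j=n\}$ and $\{i=2n\}$ of the rectangle, so $\widetilde{H}$ restricted to each side is a family of simplicial maps into the full subcomplex $F_{b_0}$, and consecutive slices along a $1$-simplex of $I_n$ or $I_{2n}$ are directly contiguous because $\widetilde{H}$ is simplicial. Traversing the boundary of the rectangle from $(0,0)$ up the left side to $(0,n)$, across the top to $(2n,n)$, and down the right side to $(2n,0)$ produces a chain
\[
1_{F_{b_0}}=\widetilde{H}(-,0,0)\sim_c\cdots\sim_c\widetilde{H}(-,0,n)\sim_c\cdots\sim_c\widetilde{H}(-,2n,n)\sim_c\cdots\sim_c\widetilde{H}(-,2n,0)=L_{\widehat{\gamma}}\circ L_\gamma,
\]
placing the two endpoint maps in the same contiguity class. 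The analogous argument using $\widehat{\gamma}\ast\gamma\sim c_{b_1}$ yields $L_\gamma\circ L_{\widehat{\gamma}}\sim 1_{F_{b_1}}$. The main obstacle is exactly this bookkeeping on the three distinguished sides: verifying that the lift of a base-constant edge stays inside the full subcomplex $F_{b_0}$ (which it does because $F_{b_0}$ is full and the relevant simplex vertices all project to $b_0$) and that each elementary slice-to-slice step of the chain is a bona fide contiguity of simplicial maps $F_{b_0}\to F_{b_0}$.
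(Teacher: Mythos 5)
Your proposal is correct, and the transport maps themselves are built exactly as in the paper: your $L_\gamma$ is the paper's $\gamma^{\sharp}$, obtained by lifting $\gamma\circ\pr_2$ with the inclusion $F_{b_0}\hookrightarrow E$ as initial condition, and both arguments finish with the Barmak--Minian criterion. Where you genuinely diverge is in how the relations $L_{\widehat{\gamma}}\circ L_\gamma\sim 1_{F_{b_0}}$ and $L_\gamma\circ L_{\widehat{\gamma}}\sim 1_{F_{b_1}}$ are verified. The paper proves a whole package of intermediate facts --- $\gamma^{\sharp}$ is independent of the chosen lift up to contiguity, invariant under homotopy of paths rel endpoints, multiplicative on concatenation, and $c_b^{\sharp}\sim 1_{F_b}$ --- and each of these requires lifting a homotopy whose values are prescribed on the three-sided subcomplex $J_{nm}$, something Definition~\ref{DEF3} does not provide directly; this is precisely why the paper invokes Minian's subdivision Lemma~\ref{minian} (the maps $\phi$, $\phi'$ with $\phi(I_{n'}\times\{0\})\subset J_{nm}$ and $\phi\circ\phi'=t_1\times t_2$). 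You sidestep all of this: by gluing the two lifts into $\Phi$ on $F_{b_0}\times I_{2n}$ via the Pasting Lemma and lifting the contracting square of $\gamma\ast\widehat{\gamma}$ with initial data only on the bottom edge, you never need to prescribe the lift on the other three sides; you only use that those sides lie over $b_0$, so fullness of the fiber turns the walk around the boundary into a contiguity chain from $1_{F_{b_0}}$ to $L_{\widehat{\gamma}}\circ L_\gamma$. This is more economical and avoids Lemma~\ref{minian} entirely, at the cost of proving only the two specific relations needed here, whereas the paper's heavier machinery (independence of lift, rel-endpoint invariance, $(\gamma\ast\delta)^{\sharp}\sim\delta^{\sharp}\circ\gamma^{\sharp}$) is reused later, e.g.\ in Proposition~\ref{main} and Theorem~\ref{big-theorem}. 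One cosmetic point: the explicit formula displayed before the theorem is the homotopy $\widehat{\gamma}\ast\gamma\sim c_{b_1}$ rel endpoints; for your $H_0$ you need its mirror $\gamma\ast\widehat{\gamma}\sim c_{b_0}$, which the paper only asserts ``similarly'', so you should either write the symmetric formula or swap the roles of $b_0$ and $b_1$ --- a trivial adjustment, not a gap.
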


\begin{proof}
Let $\gamma \colon I_n\to B$ be a Moore path such that $\gamma
(0)=b$ and $\gamma (n)=b^\prime$. Let us first check that there is
a simplicial map $F_{b}\rightarrow F_{b^\prime  }$.  Indeed, if
$i_b$ denotes the inclusion $F_b\subset E$ and $\pr_2\colon
F_b\times I_n\rightarrow I_n$ the natural projection, then take a
lift in the diagram:
$$\xymatrix{
{F_b\times \{0\}} \ar[rr]^{i_b} \ar@{^{(}->}[d] & & {E} \ar[d]^p \\
{F_b\times I_n} \ar[rr]_{\gamma \circ \mathrm{pr}_2}
\ar@{-->}[urr]^{\widetilde{\gamma }} & & {B} }$$ Clearly, we have
that $\widetilde{\gamma }(v,n)\in F_{b^\prime  },$ for all $v\in
F_b$. Therefore there is an induced simplicial map $\gamma
^{\sharp}:F_b\rightarrow F_{b^\prime  }$ given by $\gamma
^{\sharp}(v)\eqdef \widetilde{\gamma }(v,n)$.

Suppose $\delta \colon I_n\rightarrow B$ is another Moore path
satisfying that $\delta (0)=b$, $\delta (n)=b^\prime $ and there
is $H\colon I_n\times I_m\rightarrow B$ such that $H\colon \gamma
\sim \delta $ rel. $\{0,n\}$.  We shall prove that $\gamma^\sharp$
and $\delta^\sharp$ are in the same contiguity class. In
particular we will prove that $\gamma ^{\sharp}$ is independent,
up to contiguity class, of the chosen lift $\widetilde{\gamma }$.

First, let us consider the commutative diagram:
$$\xymatrix{
{F_b\times J_{nm}}  \ar[rr]^(.6)f \ar@{^{(}->}[d] & & {E} \ar[d]^p \\
{F_b\times I_n\times I_m} \ar[rr]_(.6){H\circ \pr} & & {B} }$$
where:
\begin{itemize}
\item $J_{nm}=(I_n\times \{0\})\cup (\{0\}\times
I_m)\cup (I_n\times \{m\})$,
\item $\mathrm{pr}\colon F_b\times I_n\times
I_m\to F_b\times I_n $ is the projection on the two first
complexes,
\item $f$ is the
simplicial map (see Lemma \ref{pasting} above) defined as
\begin{align*}
f(v,i,0)&=\widetilde{\gamma }(v,i), \\
f(v,0,j)&=v, \\
f(v,i,m)&=\widetilde{\delta }(v,i).
\end{align*}
\end{itemize}

Now, take the simplicial maps $\phi \colon I_{n^\prime  }\times
I_{m^\prime  }\rightarrow I_n\times I_m$ and $\phi^\prime  \colon
I_{n^{\prime\prime}}\times I_{m^{\prime\prime}}\rightarrow
I_{n^\prime  }\times I_{m^\prime  }$ given by Lemma \ref{minian}.
As $\phi(I_{n^\prime  }\times \{0\})\subset J_{nm}$, we have a lift
$\widetilde{H}\colon F_b\times I_{n^\prime  }\times I_{m^\prime
}\rightarrow E$, represented by the dotted arrow in the composite
diagram:
$$\xymatrix{
{F_b\times I_{n^\prime  }\times \{0\}} \ar[rr]^{1_{F_b}\times \phi }
\ar@{^{(}->}[d] & & {F_b\times J_{nm}} \ar[rr]^f \ar@{^{(}->}[d] &
&
{E} \ar[d]^p \\
{F_b\times I_{n^\prime  }\times I_{m^\prime  }} \ar[rr]_{1_{F_b}\times \phi }
\ar@{-->}[urrrr]^{\widetilde{H}} & & {F_b\times I_n\times I_m} \ar[rr]_{H\circ \mathrm{pr}} & &
{B} }$$

Taking into account that $\phi^\prime
(J_{n^{\prime\prime}m^{\prime\prime}})\subset I_{n^\prime  }\times
\{0\}$ and that $\phi \circ \phi^\prime  =t_1\times t_2,$ we have
that the simplicial map $$H^\prime  \eqdef \widetilde{H}\circ
(1_{F_b}\times \phi ^\prime  )\colon F_b\times
I_{n^{\prime\prime}}\times I_{m^{\prime\prime}}\rightarrow E$$
\noindent satisfies that $H^\prime  (v,n^{\prime\prime},j)\in
F_{b^\prime },$ for all $v\in F_b$ and $j\in
I_{m^{\prime\prime}}$. Therefore, there is an induced simplicial
map
$$\overline{H}\eqdef H^\prime  (-,n^{\prime\prime},-)^\prime  F_b\times I_{m^{\prime\prime}}\rightarrow F_{b^\prime  }.$$
It is not difficult to see that $\overline{H}(v,0)=\gamma ^{\sharp
}(v)$ and $\overline{H}(v,m^{\prime\prime})= \delta ^{\sharp
}(v);$ that is, $\gamma ^{\sharp }\sim \delta ^{\sharp }$ are in
the same contiguity class.

It is immediate to check that, if $c_b \colon I_n\rightarrow B$
denotes the constant Moore path in $b\in B$, then $c_b^{\sharp }$
is (up to contiguity class) the identity $1_{F_b}\colon
F_b\rightarrow F_b$. Moreover, if $\gamma^\prime \colon
I_n\rightarrow B$ and $\delta ^\prime \colon I_m\rightarrow B$ are
Moore path spaces such that $\gamma (0)=b,$ $\gamma (n)=b^\prime
=\delta (0)$ and $\delta (m)=b^{\prime\prime},$ then we can prove
that
$$(\gamma
*\delta )^{\sharp }\colon F_b\to F_{b^{\prime\prime}}$$ belongs to
the same contiguity class of the composition $\delta
^{\sharp}\circ \gamma ^{\sharp }$. Indeed, the simplicial map
$\widetilde{\gamma *\delta }\colon F_b\times I_{n+m}\ \to E$
defined as
$$(\widetilde{\gamma *\delta })(v,i)=\begin{cases}\widetilde{\gamma }(v,i), & 0\leq i\leq n \\
\widetilde{\delta}(\widetilde{\gamma }(v,n),i-n), & n\leq i\leq
n+m\end{cases}$$ gives a lift:
$$\xymatrix{
{F_b\times \{0\}} \ar[rr]^{i_b} \ar@{^{(}->}[d] & & {E} \ar[d]^p \\
{F_b\times I_{n+m}} \ar[rr]_{(\gamma *\delta )\circ \pr_2} \ar@{-->}[urr]^{\widetilde{\gamma *\delta }}
& & {B} }$$

This proves that $(\gamma *\delta )^{\sharp }$ and $\delta
^{\sharp}\circ \gamma ^{\sharp }$ are in the same contiguity class.

Using the above reasonings and the fact that $\overline{\gamma
}*\gamma \sim c_{b^\prime  }$ rel. $\{0,n\}$ and $\gamma
*\overline{\gamma }\sim c_{b}$ rel.~$\{0,n\}$, we conclude the proof
of the result.
\end{proof}

%!TEX root = main.tex

\section{Collapsible base}\label{S7}
%Cor: Si la base es B strongly collapsible  $\Rightarrow$ la fibraci^\prime on es homot^\prime opica (fiberwise) que la trivial.

Minian's Lemma (Lemma \ref{minian}) and the Simplicial Pasting Lemma
(Lemma \ref{pasting}) will be crucial for the next results. We
start with this fairly general proposition.

\begin{proposition}\label{main}
Let $p\colon E\rightarrow B$ be a simplicial fibration and let
$F_0,F_1\colon K\times I_n\rightarrow E$ be simplicial maps such
that $p\circ F_0$ and $p\circ F_1$ are in the same contiguity
class with $m$ steps. Let $H\colon p\circ F_0\sim p\circ F_1$
be the map given by Proposition \ref{MINIANCONT} and analogously,
let $G\colon F_0|_{K\times \{0\}}\sim F_1|_{K\times \{0\}}$ be the
map given by Proposition \ref{MINIANCONT}. Consider the following
commutative diagram:
$$\xymatrix{
{K\times \{0\}\times I_m}  \ar[rr]^(.6)G \ar@{^{(}->}[d] & & {E} \ar[d]^p \\
{K\times I_n\times I_m} \ar[rr]_(.6)H & & {B} }$$ \noindent

Then, for suitable $q\geq n$ and $p\geq m,$ there exist
subdivision maps $t_1\colon I_q\rightarrow I_n$ and $t_2\colon I_p\rightarrow
I_m$ and a simplicial map
$$H\sp\prime \colon K\times I_q\times I_p\rightarrow E$$
\noindent such that $H^\prime $ defines a homotopy
$F_0 \circ (1_K\times t_1)\sim F_1 \circ (1_K\times t_1)$, which is an extension of
$G\circ (1_{K\times \{0\}}\times t_2).$
\end{proposition}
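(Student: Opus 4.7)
The plan is to combine the three given maps $F_0$, $F_1$, and $G$ into a single simplicial map defined on the "U-shaped" boundary of the square $I_n\times I_m$, and then use the fibration property together with Minian's Lemma to extend this lift to the whole square (after a suitable subdivision). This is essentially the same technique used in the proof of Theorem \ref{PrincipalFibra}.

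First, I would invoke the Simplicial Pasting Lemma (Lemma \ref{pasting}) to assemble $F_0$, $F_1$, and $G$ into a simplicial map
$$f\colon K\times J_{nm}\longrightarrow E,\qquad J_{nm}=(I_n\times \{0\})\cup(\{0\}\times I_m)\cup (I_n\times \{m\}),$$
with $f(v,i,0)=F_0(v,i)$, $f(v,i,m)=F_1(v,i)$, and $f(v,0,j)=G(v,0,j)$. Compatibility on the intersections follows from the hypothesis that $G$ restricts to $F_0|_{K\times\{0\}}$ at $j=0$ and to $F_1|_{K\times\{0\}}$ at $j=m$. By construction $p\circ f = H|_{K\times J_{nm}}$, so we obtain a commutative square with $f$ on top, the inclusion $J_{nm}\hookrightarrow I_n\times I_m$ on the left, and $H$ on the bottom.

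Next I would apply Lemma \ref{minian} to produce $\phi\colon I_{n'}\times I_{m'}\to I_n\times I_m$ with $\phi(I_{n'}\times \{0\})\subseteq J_{nm}$, and then $\phi'\colon I_{n''}\times I_{m''}\to I_{n'}\times I_{m'}$ with the opposite boundary sketches and $\phi\circ\phi'=t_1\times t_2$ for subdivision maps $t_1\colon I_{n''}\to I_n$, $t_2\colon I_{m''}\to I_m$. Composing with $1_K$, the inclusion $K\times I_{n'}\times \{0\}\hookrightarrow K\times J_{nm}$ factors through $\phi$, so I can use that $p$ is a simplicial fibration to obtain a lift
$$\widetilde{H}\colon K\times I_{n'}\times I_{m'}\longrightarrow E$$
satisfying $\widetilde{H}\circ(1_K\times \phi|_{I_{n'}\times\{0\}}) = f\circ(1_K\times \phi|_{I_{n'}\times\{0\}})$ and $p\circ \widetilde{H}=H\circ(1_K\times \phi)$. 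Here the fibration is applied with respect to $K\times I_{n'}$ as the parameter complex, lifting the $I_{m'}$-direction (after rewriting the diagram so that the base of the cylinder is $K\times I_{n'}$).

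Finally, set $q\eqdef n''$, $p\eqdef m''$ and define
$$H'\eqdef \widetilde{H}\circ (1_K\times \phi')\colon K\times I_q\times I_p\longrightarrow E.$$
Since $\phi'$ sends the boundary $J_{n''m''}$ into $I_{n'}\times \{0\}$, and since $\phi\circ\phi'=t_1\times t_2$, on the three boundary pieces $I_q\times\{0\}$, $I_q\times\{p\}$, and $\{0\}\times I_p$ the map $H'$ reduces to $f\circ(1_K\times(t_1\times t_2))$, which yields respectively $F_0\circ(1_K\times t_1)$, $F_1\circ(1_K\times t_1)$, and $G\circ(1_{K\times\{0\}}\times t_2)$. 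Together with $p\circ H' = H\circ(1_K\times(t_1\times t_2))$, this gives exactly the required homotopy. The main obstacle in writing this out is bookkeeping: checking carefully that $\phi$ and $\phi'$ supplied by Lemma \ref{minian} indeed carry the three relevant boundary strata to where we need, so that all four boundary conditions on $H'$ fall out simultaneously from $\phi\circ\phi'=t_1\times t_2$.
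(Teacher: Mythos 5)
Your proposal is correct and follows essentially the same route as the paper's proof: pasting $F_0$, $F_1$, $G$ into a map on $K\times J_{nm}$ via the Simplicial Pasting Lemma, lifting $H\circ(1_K\times\phi)$ through the fibration using Minian's map $\phi$ (with $K\times I_{n'}$ as the parameter complex), and then composing with $\phi'$ and using $\phi\circ\phi'=t_1\times t_2$ to read off the boundary conditions. The bookkeeping you flag is exactly what the paper's proof handles, with the same choices $q=n''$, $p=m''$.
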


\begin{proof}
Consider the commutative diagram
$$\xymatrix{
{K\times J_{nm}}  \ar[rr]^(.6)f \ar@{^{(}->}[d] & & {E} \ar[d]^p \\
{K\times I_n\times I_m} \ar[rr]_(.6)H & & {B} }$$ \noindent where
$J_{nm}$ is the full subcomplex of $I_n\times I_m$ given in the
proof of Theorem \ref{PrincipalFibra} \noindent and $f$ is the
simplicial map (see Lemma \ref{pasting}) defined as
\begin{align*}
f(v,i,0)&=F_0(v,i), \\
f(v,0,j)&=G(v,0,j), \\
f(v,i,m)&=F_1(v,i).
\end{align*}
Now we take the simplicial maps $\phi \colon I_{n\prime }\times
I_{m\prime }\rightarrow I_n\times I_m$ and $\phi\sp\prime \colon
I_{n^{\prime\prime}}\times I_{m^{\prime\prime}}\rightarrow
I_{n^\prime }\times I_{m^\prime }$ given by Lemma \ref{minian}. As
$\phi \circ (I_{n^\prime }\times \{0\})\subset J_{nm}$ we have a
lift $\widetilde{H}\colon K\times I_{n^\prime }\times I_{m^\prime
}\rightarrow E$, represented by the dotted arrow in the composite
diagram:
$$\xymatrix{
{K\times I_{n^\prime }\times \{0\}} \ar[rr]^{1_K\times \phi }
\ar@{^{(}->}[d] & & {K\times J_{nm}} \ar[rr]^f \ar@{^{(}->}[d] & &
{E} \ar[d]^p \\
{K\times I_{n^\prime }\times I_{m^\prime }} \ar[rr]_{1_K\times \phi }
\ar@{-->}[urrrr]^{\widetilde{H}} & & {K\times I_n\times I_m} \ar[rr]_H & & {B} }$$
Renaming $q=n^{\prime\prime}$, $p=m^{\prime\prime}$, and taking into account that $$\phi
^\prime (J_{n^{\prime\prime}m^{\prime\prime}})\subset I_{n^\prime }\times \{0\}$$ \noindent and that $\phi
\circ \phi\sp\prime =t_1\times t_2,$ we have that the simplicial map
$H^\prime \eqdef \widetilde{H} \circ (1_K\times \phi\sp\prime )$ fulfills all the expected
requirements.
\end{proof}

Now, for our next result, we need to establish the following natural
definition:

\begin{definition}
Let $p\colon E\rightarrow B$ be a simplicial fibration and consider
$f,g\colon K\rightarrow E$ simplicial maps. Then it is said that $f$ and
$g$ are \emph{in the same class of fibrewise contiguity}, denoted
by $f\sim _p g,$ if there exists a finite sequence of simplicial
maps
$$f=f_0\sim _c  \dots \sim _c f_n=g$$ \noindent such that
$p \circ f=p \circ f_i$, for all $i=0,\dots,n$.
\end{definition}

In other words, $f$ and $g$ are in the same class of fibrewise
contiguity if there exists a simplicial map $F\colon K\times
I_n\rightarrow X$ satisfying
\begin{enumerate}
\item[(i)] $F(v,0)=f(v)$ and $F(v,n)=g(v)$, for all $v\in K;$

\item[(ii)] $p(F(v,i))=p(f(v))$, for all $v\in K$ and $i\in I_n$
(therefore $p \circ f=p \circ g$).
\end{enumerate}

As a corollary of Proposition \ref{main}, we have:

\begin{corollary}\label{twolifts}
Let $p\colon E\rightarrow B$ be a simplicial fibration and let
$F_0,F_1\colon K\times I_n\rightarrow E$ be two lifts of the same
map
$$\xymatrix{
{K\times \{0\}} \ar[rr]^f \ar@{^{(}->}[d] & & {E} \ar[d]^p \\
{K\times I_n} \ar[rr] \ar@{-->}[urr]^{F_0}_{F_1} & & {B} }$$ Then,
for a suitable $q\geq n,$ there exists a subdivision map
$t\colon I_q\rightarrow I_n$ such that $F_0 \circ (1_K\times t)\sim
_p F_1\circ (1_K\times t).$
\end{corollary}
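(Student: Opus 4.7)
The plan is to invoke Proposition~\ref{main} with the degenerate, constant choice of homotopy that the hypotheses of the corollary supply for free. Since $F_0$ and $F_1$ are lifts of the same map $K\times I_n\to B$, one has $p\circ F_0=p\circ F_1$, and since both lifts restrict to $f$ on $K\times\{0\}$, one also has $F_0|_{K\times\{0\}}=f=F_1|_{K\times\{0\}}$. Fix any $m\geq 1$ and define $H\colon K\times I_n\times I_m\to B$ by $H(v,i,j)\eqdef p(F_0(v,i))$; this vertex map is simplicial because it factors through the projection onto $K\times I_n$, and it serves as a constant $m$-step contiguity homotopy between $p\circ F_0$ and $p\circ F_1$. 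Likewise, the assignment $G(v,0,j)\eqdef f(v)$ defines a simplicial map $G\colon K\times\{0\}\times I_m\to E$ that is constant in the $I_m$-coordinate and clearly makes the square in the hypothesis of Proposition~\ref{main} commute through $p$.

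Applying Proposition~\ref{main} to this data yields integers $q\geq n$ and $r\geq m$, subdivision maps $t_1\colon I_q\to I_n$ and $t_2\colon I_r\to I_m$, and a simplicial map $H^\prime\colon K\times I_q\times I_r\to E$ providing a contiguity homotopy (in the $I_r$-direction) from $F_0\circ(1_K\times t_1)$ to $F_1\circ(1_K\times t_1)$, extending $G\circ(1_{K\times\{0\}}\times t_2)$.

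The main point, and really the only non-formal step, is to check that $H^\prime$ is a \emph{fibrewise} contiguity, i.e.\ that $p\circ H^\prime(v,i,j)$ is independent of the $I_r$-coordinate $j$. Tracing back to the construction inside the proof of Proposition~\ref{main}, $H^\prime=\widetilde H\circ(1_K\times\phi^\prime)$, where $\widetilde H$ is a lift of $H\circ(1_K\times\phi)$, so
$$p\circ H^\prime=H\circ\bigl(1_K\times(\phi\circ\phi^\prime)\bigr)=H\circ\bigl(1_K\times(t_1\times t_2)\bigr).$$
Since $H$ was chosen constant in its second coordinate,
$$p\bigl(H^\prime(v,i,j)\bigr)=H(v,t_1(i),t_2(j))=p\bigl(F_0(v,t_1(i))\bigr),$$
which is independent of $j$ and coincides with $p\circ F_0\circ(1_K\times t_1)$. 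The successive $I_r$-slices of $H^\prime$ therefore form a finite chain of contiguous simplicial maps all projecting under $p$ to $p\circ F_0\circ(1_K\times t_1)$, which is precisely the definition of $F_0\circ(1_K\times t_1)\sim_p F_1\circ(1_K\times t_1)$. I do not foresee any serious obstacle beyond this verification; the technical heart of the argument has already been absorbed into Proposition~\ref{main}, and the corollary amounts to recognising that a constant lower homotopy yields a fibrewise upper homotopy.
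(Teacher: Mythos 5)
Your proposal is correct and follows essentially the same route as the paper, which likewise proves the corollary by feeding Proposition~\ref{main} the constant homotopies $G(v,0,j)=f(v,0)$ and $H(v,i,j)=p(F_0(v,i))$ (constant in the $I_m$-direction). Your final verification that $p\circ H^\prime = H\circ\bigl(1_K\times(t_1\times t_2)\bigr)$ is independent of the last coordinate, obtained by inspecting the construction of $H^\prime$ inside the proof of Proposition~\ref{main}, is exactly the fibrewise point the paper leaves implicit, so nothing is missing.
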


\begin{proof}
Just apply Proposition \ref{main} to the simplicial maps $G,H$
defined as $G(v,0,j)=f(v,0)$ and $H(v,i,j)=(p \circ f)(v,0),$ for
all $v\in K$ and $j\in I_n$.
\end{proof}

\begin{remark}\label{ene-por-cero}
Observe that the previous corollary also holds true when we
consider $K\times \{n\}$ in the diagram instead of $K\times\{0\}.$
In this case one just have apply the corollary by carefully using
the simplicial isomorphism $I_n\stackrel{\cong }{\rightarrow }I_n$
given by $i\mapsto n-i$. Using this trick one can also check that
a fibration $p\colon E\rightarrow B$ may be also characterized by
the existence of a lift in any diagram of the form
$$\xymatrix{
{K\times \{n\}} \ar[rr] \ar@{^{(}->}[d] & & {E} \ar[d]^p \\
{K\times I_n} \ar[rr] \ar@{-->}[urr] & & {B} }$$\end{remark}

For our next result we will consider the notion of having the same
type of fibrewise contiguity.

\begin{definition}
We say that two simplicial fibrations $p_1\colon E_1\rightarrow
B$, $p_2\colon E_2\rightarrow B$ have the \emph{same type of
fibrewise contiguity} when there exist simplicial maps $f\colon
E_1\rightarrow E_2$ and $g\colon E_2\rightarrow E_1$ such that
$p_2 \circ f=g$, $p_1 \circ g=f$ and $g \circ f\sim _{p_1}1_{E_1}$
and $f \circ g\sim _{p_2}1_{E_2}$.
\end{definition}

The following important result asserts that simplicial maps in the
same class of contiguity induce simplicial fibrations having the
same type of fibrewise contiguity.

\begin{theorem}\label{big-theorem}
Let $p\colon E\rightarrow B$ be a simplicial fibration and let
$f_0,f_1\colon K\rightarrow B$ simplicial maps. Consider, for each
$i=1,2$, the pullback of $f_i$ along $p$:
$$\xymatrix{
{E_i} \ar[d]_{p_i} \ar[r]^{f^\prime _i} & {E} \ar[d]^p \\ {K}
\ar[r]_{f_i} & {B} }$$ If $f_0$ and $f_1$ are in the same class of
contiguity, then the simplicial fibrations $p_0$ and $p_1$ have
the same type of fibrewise contiguity.
\end{theorem}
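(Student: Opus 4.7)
I would first treat the case of a one-step contiguity $f_0 \sim_c f_1$, and then extend to general $m$-step contiguity classes via transitivity of ``same type of fibrewise contiguity'' (which is straightforward: the horizontal maps compose, and $\sim_{p}$ is preserved under pre- and post-composition with simplicial maps). So assume we have $H\colon K\times I_1\to B$ with $H(-,0)=f_0$ and $H(-,1)=f_1$, supplied by Proposition \ref{MINIANCONT}.

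To construct $f\colon E_0\to E_1$, I would lift $H\circ (p_0\times 1_{I_1})\colon E_0\times I_1\to B$ through $p$ starting from $f'_0\colon E_0\to E$, obtaining $\widetilde H\colon E_0\times I_1\to E$; since $p\circ \widetilde H(-,1)=f_1\circ p_0$, the pullback property gives a unique $f$ with $p_1\circ f=p_0$ and $f'_1\circ f=\widetilde H(-,1)$. Symmetrically, the reversed homotopy $H'(v,i)\eqdef H(v,1-i)$ and a lift $\widetilde{H'}\colon E_1\times I_1\to E$ starting at $f'_1$ produce $g\colon E_1\to E_0$ with $p_0\circ g=p_1$ and $f'_0\circ g=\widetilde{H'}(-,1)$.

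The heart of the argument is verifying $g\circ f \sim_{p_0} 1_{E_0}$ (the relation $f\circ g\sim_{p_1}1_{E_1}$ follows by the same method with the roles of $0$ and $1$ swapped). I would concatenate the two lifts into $\Phi\colon E_0\times I_2\to E$ with $\Phi(x,i)=\widetilde H(x,i)$ for $0\leq i\leq 1$ and $\Phi(x,1+i)=\widetilde{H'}(f(x),i)$ for $0\leq i\leq 1$; the values agree at $i=1$ because $\widetilde{H'}(f(x),0)=f'_1(f(x))=\widetilde H(x,1)$. Then $\Phi(-,0)=f'_0$, $\Phi(-,2)=f'_0\circ g\circ f$, and $p\circ\Phi$ traces the loop $f_0\to f_1\to f_0$ in $B$. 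This loop is in the same contiguity class as the constant homotopy at $f_0$ (with boundary values fixed), witnessed by the simplicial map $K\times I_2\times I_1\to B$ given by $(v,i,j)\mapsto H(v,\min\{i,\,2-i,\,1-j\})$---a discrete analogue of the $\gamma*\overline\gamma\sim c$ contraction used in the proof of Theorem \ref{PrincipalFibra}. Simpliciality reduces to checking $\min\{i,2-i,1-j\}$ is simplicial as a map $I_2\times I_1\to I_1$ on the categorical product.

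Finally I would apply Proposition \ref{main} to $F_0=\Phi$ and $F_1=f'_0\circ\pr_1\colon E_0\times I_2\to E$, with base contiguity the witness above composed with $p_0$ and trivial contiguity $G=f'_0\circ\pr_1$ between the coinciding restrictions $F_0|_{E_0\times\{0\}}=F_1|_{E_0\times\{0\}}=f'_0$. This yields $H'\colon E_0\times I_q\times I_p\to E$ connecting $\Phi\circ(1\times t_1)$ to $f'_0$; the slice $H'(-,q,-)\colon E_0\times I_p\to E$ is a contiguity chain from $f'_0\circ g\circ f$ to $f'_0$ whose $p$-image is constant at $f_0\circ p_0$. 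Setting $F'(x,j)\eqdef(p_0(x),H'(x,q,j))$ lands inside $E_0$ by the pullback identification (since the second coordinate $p$-projects to $f_0(p_0(x))$) and supplies the required fibrewise contiguity $g\circ f\sim_{p_0}1_{E_0}$. The main obstacle is this last assembly: identifying the correct boundary data to feed Proposition \ref{main}, verifying the simplicial null-contraction of the loop on the categorical product, and transferring the resulting lift from $E$ back to $E_0$ through the pullback.
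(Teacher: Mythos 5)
Your argument is correct, but it is organized differently from the paper's. The paper works with the full $n$-step Minian homotopy $F\colon K\times I_n\to B$ at once: it lifts $F\circ(p_0\times 1)$ starting from $f'_0$ and $F\circ(p_1\times 1)$ ending at $f'_1$ (for the latter see Remark \ref{ene-por-cero}), defines the comparison maps $g_0,g_1$ by the pullback property exactly as you define $f$ and $g$, and then observes that $F'_0\circ(g_1\times 1)$ and $F'_1$ are two lifts of the \emph{same} base homotopy agreeing at time $0$; Corollary \ref{twolifts} (the constant-base instance of Proposition \ref{main}) then gives, after subdivision, a fibrewise contiguity whose end slice transfers through the pullback to $g_0\circ g_1\sim_{p_1}1_{E_1}$. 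You instead reduce to a single contiguity step, concatenate the two lifts into a lift $\Phi$ of the loop $H$ followed by its reverse, contract that loop explicitly via $(v,i,j)\mapsto H(v,\min\{i,2-i,1-j\})$, and feed this nonconstant base homotopy into Proposition \ref{main}. This does work: the concatenation is simplicial by Lemma \ref{pasting}, your $\min$-witness is simplicial (any vertex map into $I_1$ sends simplices into faces of $\{0,1\}$), and the end slice transfers to $E_0$ as you say because the pullback is a full subcomplex of $K\times E$. What the paper's route buys is economy: no explicit loop contraction is needed (that device is reserved for Theorem \ref{PrincipalFibra}), and no reduction to one step, hence no appeal to transitivity of ``same type of fibrewise contiguity'' through the intermediate pullbacks along the maps $\varphi_i$ of the chain --- a fact you should at least record, though it does hold since $\sim_{p}$ is stable under composition with maps over $B$. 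What your route costs is a citation-precision point: the \emph{statement} of Proposition \ref{main} does not assert that $p\circ H'$ equals the subdivided base homotopy, and your claim that the slice $H'(-,q,-)$ has constant $p$-image $f_0\circ p_0$ needs exactly that; it follows from the construction in the proof ($p\circ H'=H\circ(1\times(\phi\circ\phi'))$ with $\phi\circ\phi'=t_1\times t_2$), so you must either quote the proof or strengthen the statement, much as Corollary \ref{twolifts} makes the fibrewise conclusion explicit.
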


\begin{proof}
Consider a simplicial map $F\colon K\times I_n\rightarrow B$ such
that $F(v,0)=f_0(v)$ and $F(v,n)=f_1(v),$ for all $v\in K.$ Take
$F^\prime _0$, $F^\prime _1$ lifts of the following diagrams (for
the second diagram see Remark \ref{ene-por-cero}):
$$\xymatrix{
{E_0\times \{0\}}  \ar[rr]^{f^\prime _0} \ar@{^{(}->}[d] & & {E} \ar[d]^p
& &  {E_1\times \{n\}}  \ar[rr]^{f^\prime _1} \ar@{^{(}->}[d] & & {E}
\ar[d]^p   \\
{E_0\times I_n} \ar@{-->}[urr]^{F^\prime _0} \ar[rr]_{F\circ(p_0\times 1)} & &
{B} & & {E_1\times I_n} \ar@{-->}[urr]^{F^\prime _1} \ar[rr]_{F\circ(p_1\times
1)} & & {B}  }$$
By the pullback property there are well defined
simplicial maps $g_0\colon E_0\rightarrow E_1$ and $g_1\colon E_1\rightarrow
E_0$ satisfying the commutativities given in the following
diagrams:
$$\xymatrix{
{E_0} \ar@{-->}[dr]^{g_0} \ar@/^1pc/[drr]^{F^\prime _0(-,n)}
\ar@/_1pc/[ddr]_{p_0} & &    & & {E_1} \ar@{-->}[dr]^{g_1}
\ar@/^1pc/[drr]^{F^\prime _1(-,0)} \ar@/_1pc/[ddr]_{p_1} & &  \\
 & {E_1} \ar[r]^{f^\prime _1} \ar[d]_{p_1} & {E} \ar[d]^p & &
 & {E_0} \ar[r]^{f^\prime _0} \ar[d]_{p_0} & {E} \ar[d]^p \\
 & {K} \ar[r]_{f_1} & {B} & &
 & {K} \ar[r]_{f_0} & {B}
}$$ Let us first check that $g_0 \circ g_1\sim _{p_1}1_{E_1}.$
Indeed, observe that $F^\prime _0 \circ (g_1\times 1)$ and
$F^\prime _1$ are lifts of the same map
$$\xymatrix{
{E_1\times \{0\}} \ar[rr]^{f^\prime _0\circ g_1} \ar@{^{(}->}[d] & & {E} \ar[d]^p \\
{E_1\times I_n} \ar[rr]_{F\circ (p_1\times 1)} \ar@{-->}[urr] & &
{B} }$$ Corollary \ref{twolifts} assures the existence of a
suitable subdivision map $t\colon I_{n^\prime }\rightarrow I_n$
and a simplicial map $G\colon E_1\times I_{n^\prime }\times
I_m\rightarrow E$ satisfying that
$$G\colon F^\prime _0 \circ (g_1\times 1)\circ(1\times t)\sim _p F^\prime _1\circ (1\times t)$$
Again, using the pullback property, there is an induced simplicial
map $\widetilde{G}\colon E_1\times I_m\rightarrow E_1$ satisfying
$$\xymatrix{
{E_1\times I_m} \ar@{-->}[dr]^{\widetilde{G}}
\ar@/^1pc/[drr]^{G(-,n^\prime ,-)}
\ar@/_1pc/[ddr]_{p_1\circ \pr}   \\
 & {E_1} \ar[r]^{f^\prime _1} \ar[d]_{p_1} & {E} \ar[d]^p
  \\
 & {K} \ar[r]_{f_1} & {B}
}$$ A simple inspection proves that $\widetilde{G}\colon g_0 \circ
g_1\sim _{p_1}1_{E_1}.$

Analogously, by applying Corollary \ref{twolifts} to the diagram
$$p\circ f^\prime _1\circ g_0=F\circ (p_0\times 1)|_{E_0\times \{n\}}$$ for the common lifts
$F^\prime _1\circ(g_0\times 1)$ and $F^\prime _0$, and taking into
account Remark \ref{ene-por-cero}, one can find a simplicial map
$H$ satisfying $$H\colon F^\prime _1\circ (g_0\times 1)\circ
(1\times s)\sim _p F^\prime _0\circ (1\times s)$$ for a suitable
subdivision map $s\colon I_{n^{\prime\prime}}\rightarrow I_n.$
Taking $\widetilde{H}$ the simplicial map characterized by the
equalities $f^\prime _0 \circ \widetilde{H}=G(-,0,-)$ and $p_0
\circ \widetilde{H}=p_0\circ \pr$ we obtain $\widetilde{H}\colon
g_1\circ g_0\sim _{p_0}1_{E_0}.$
\end{proof}

\begin{corollary}
Let $p\colon E\rightarrow B$ be a simplicial fibration where $B$
is strongly collapsible. Then $p$ has the same class of fibrewise
contiguity of the trivial fibration $B\times
p^{-1}(b_0)\rightarrow B$, for any vertex $b_0\in B.$
\end{corollary}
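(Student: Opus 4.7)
My plan is to apply Theorem \ref{big-theorem} to the pair of simplicial maps
$f_0 = 1_B \colon B \to B$ and $f_1 = c_{b_0}\colon B \to B$, where $c_{b_0}$ denotes the constant map with value $b_0$. The pullback of $p$ along $f_0$ is (canonically isomorphic to) $p\colon E \to B$ itself, while the pullback of $p$ along $f_1$ has vertex set
\[
\{(v,e)\in V(B)\times V(E) : c_{b_0}(v)=p(e)\} = V(B)\times V(p^{-1}(b_0)),
\]
and the induced map onto $B$ is the first projection; this is exactly the trivial fibration $B\times p^{-1}(b_0)\to B$. So if I can show that $f_0$ and $f_1$ lie in the same contiguity class, then Theorem \ref{big-theorem} immediately gives the result.

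To see that $1_B \sim c_{b_0}$, I would invoke the characterization of strong homotopy type in terms of contiguity (the proposition of Barmak--Minian quoted in Section \ref{S2}). Strong collapsibility means $B$ has the same strong homotopy type as the one-vertex complex $\{b_0\}$, so there exist simplicial maps $\varphi\colon B\to \{b_0\}$ and $\psi\colon \{b_0\}\to B$ with $\psi\circ\varphi \sim 1_B$ and $\varphi\circ\psi \sim 1_{\{b_0\}}$. But $\psi\circ\varphi$ is forced to be the constant map $c_{\psi(b_0)}$; replacing $b_0$ by $\psi(b_0)$ if necessary (or, equivalently, observing that any two vertices of a strongly collapsible complex are connected by a zig-zag of contiguities making the corresponding constant maps contiguously equivalent), we obtain $1_B \sim c_{b_0}$ for the chosen vertex $b_0$.

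With these two ingredients in hand, Theorem \ref{big-theorem} produces simplicial maps between $E$ and $B\times p^{-1}(b_0)$ over $B$ whose compositions are in the same class of fibrewise contiguity as the respective identities, which is exactly the conclusion. I do not expect any serious obstacle: the pullback identifications are formal, and the translation from strong collapsibility to $1_B \sim c_{b_0}$ is direct from the Barmak--Minian proposition. The only minor care needed is verifying that the chosen base vertex $b_0$ is actually the one appearing in the contiguity $\psi\circ\varphi \sim 1_B$, which can always be arranged by the zig-zag argument just mentioned (or by observing that the argument works for any choice of vertex in a strongly collapsible base).
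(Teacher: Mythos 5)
Your proposal is correct and takes essentially the same route as the paper: the paper's proof is precisely to note that $1_B\sim c_{b_0}$ when $B$ is strongly collapsible and then invoke Theorem \ref{big-theorem}, with the pullback identifications left implicit. Your extra verifications (identifying the two pullbacks and deducing $1_B\sim c_{b_0}$ from the Barmak--Minian characterization plus connectedness of a strongly collapsible complex) are sound and just fill in details the paper treats as immediate.
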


\begin{proof}
Just take into account that $1_B\colon B\rightarrow B$ and the
constant path $c_{b_0}\colon B\rightarrow B$ are in the same class
of contiguity and use Theorem \ref{big-theorem}.
\end{proof}

We finish this section with an interesting property whose proof
relies on the proof of Theorem \ref{big-theorem}. Indeed, remember
that, in the statement of such theorem, we have pullbacks
($i=0,1$):
$$\xymatrix{
    {E_i} \ar[d]_{p_i} \ar[r]^{f^\prime _i} & {E} \ar[d]^p \\ {K}
    \ar[r]_{f_i} & {B} }$$ \noindent where $f_0\sim f_1$ (i.e., $f_0$
and $f_1$ are in the same class of contiguity). From the proof one
obtains simplicial maps $g_0:E_0\rightarrow E_1$ and
$g_1:E_1\rightarrow E_0$ (with $p_1\circ g_0=p_0$ and $p_0\circ
g_1=p_1$) and Minian simplicial homotopies
$$F'_0:E_0\times I_n\rightarrow E;\hspace{5pt}F'_1:E_1\times
I_n\rightarrow E$$ \noindent satisfying $F'_0:f'_0\sim f'_1\circ
g_0$ and $F'_1:f'_0\circ g_1\sim f'_1$.  Moreover, $g_0$ and $g_1$
satisfy $g_1\circ g_0\sim _{p_0}1_{E_0}$ and $g_0\circ g_1\sim
_{p_1}1_{E_1}.$ In particular $g_1\circ g_0\sim 1_{E_0}$ and
$g_0\circ g_1\sim 1_{E_1}.$

We easily have the following lemma:

\begin{lemma}
    Consider the pullback of a simplicial fibration $p:E\rightarrow B$
    along a simplicial map $f:B\rightarrow B$ such that $f\sim 1_B:$
    $$\xymatrix{
        {E'} \ar[d]_{p'} \ar[r]^{f'} & {E} \ar[d]^p \\ {B} \ar[r]_{f} &
        {B} }$$ Then $f'\sim g_0$ and $f'\circ g_1\sim 1_E.$ In
    particular, $f'$ is a strong equivalence with $g_1:E\rightarrow
    E'$ as a homotopy inverse.
\end{lemma}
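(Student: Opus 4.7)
The plan is to obtain this lemma as an immediate specialization of Theorem \ref{big-theorem}, reusing the maps $g_0$ and $g_1$ constructed inside its proof. I apply that theorem with $K$ replaced by $B$, taking $f_0 \eqdef f$ and $f_1 \eqdef 1_B$. The pullback of $p$ along $f_1 = 1_B$ is canonically identified with $E$ itself, with structure map $p_1 = p$ and top arrow $f'_1 = 1_E$; the pullback along $f_0 = f$ is by definition $E'$, with $p_0 = p'$ and $f'_0 = f'$. Since $f \sim 1_B$ by hypothesis, Theorem \ref{big-theorem} produces simplicial maps $g_0 \colon E' \to E$ and $g_1 \colon E \to E'$ together with Minian homotopies $F'_0 \colon f'_0 \sim f'_1 \circ g_0$ and $F'_1 \colon f'_0 \circ g_1 \sim f'_1$. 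Under the identifications above, these specialize to $F'_0 \colon f' \sim g_0$ and $F'_1 \colon f' \circ g_1 \sim 1_E$, which are exactly the two displayed assertions of the lemma.

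For the \emph{in particular} statement, I invoke the Barmak--Minian characterization of strong equivalence: I must check that $g_1 \circ f' \sim 1_{E'}$ and $f' \circ g_1 \sim 1_E$. The second has just been established. For the first, Theorem \ref{big-theorem} also provides $g_1 \circ g_0 \sim_{p_0} 1_{E'}$, and in particular $g_1 \circ g_0 \sim 1_{E'}$ in the ordinary contiguity class sense. Composing the contiguity $f' \sim g_0$ on the left with $g_1$ (composition is compatible with contiguity classes) yields
\[
g_1 \circ f' \;\sim\; g_1 \circ g_0 \;\sim\; 1_{E'}.
\]
Together with $f' \circ g_1 \sim 1_E$ this exhibits $f'$ as a strong equivalence with homotopy inverse $g_1$.

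There is no serious obstacle: the entire content is already present inside the proof of Theorem \ref{big-theorem}, and the statement is essentially a rewording of that construction when one of the two maps being compared is the identity. The only minor subtlety to flag is the canonical identification of the pullback of $p$ along $1_B$ with $(E, p, 1_E)$; this is justified by the universal property of pullbacks in $\mathbf{SC}$ (pullbacks are unique up to canonical isomorphism), so no loss of generality occurs when we plug $f_1 = 1_B$ into the framework of Theorem \ref{big-theorem}.
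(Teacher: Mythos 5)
Your proposal is correct and follows essentially the same route as the paper: the paper's proof consists precisely of observing that the square with $1_E$ and $1_B$ is a pullback (so the pullback of $p$ along $1_B$ may be taken to be $E$ itself) and then invoking the maps $g_0$, $g_1$ and the Minian homotopies $F'_0\colon f'_0\sim f'_1\circ g_0$, $F'_1\colon f'_0\circ g_1\sim f'_1$ extracted from the proof of Theorem \ref{big-theorem}, exactly as you do with $f_0=f$, $f_1=1_B$. Your explicit verification of the ``in particular'' claim, composing $f'\sim g_0$ with $g_1$ and using $g_1\circ g_0\sim 1_{E'}$, merely spells out what the paper leaves implicit.
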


\begin{proof}
    Just observe that the following square is a pullback
    $$\xymatrix{
        {E} \ar[d]_{p} \ar[r]^{1_E} & {E} \ar[d]^p \\ {B} \ar[r]_{1_B} &
        {B} }$$ \noindent and apply the argument above.
\end{proof}

And finally our result. Observe that such result completes
Proposition \ref{pullback}.

\begin{proposition}
    Consider the pullback of a simplicial fibration $p:E\rightarrow B$
    along a simplicial map $f:K\rightarrow B$:
    $$\xymatrix{
        {E'} \ar[d]_{p'} \ar[r]^{f'} & {E} \ar[d]^p \\ {K} \ar[r]_{f} &
        {B} }$$ If $f$ is a strong equivalence, then $f'$ is also a strong
    equivalence.
\end{proposition}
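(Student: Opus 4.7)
Let $g\colon B\to K$ be a quasi-inverse of $f$, so that $fg\sim 1_B$ and $gf\sim 1_K$. The plan is to build an explicit quasi-inverse of $f'$ by stacking two further pullback squares and invoking the preceding Lemma twice (applied both to $p$ and to the fibration $p'$, which is itself a fibration by Proposition \ref{pullback}).

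First, pull $p'\colon E'\to K$ back along $g\colon B\to K$, obtaining a square with top arrow $g_0\colon E_0\to E'$ and bottom arrow $p_0\colon E_0\to B$. Pasting this with the given pullback square yields the pullback of $p$ along $fg\colon B\to B$, whose top composite is $f'\circ g_0\colon E_0\to E$. Since $fg\sim 1_B$, the preceding Lemma produces a simplicial map $\alpha\colon E\to E_0$ with $(f'g_0)\circ\alpha\sim 1_E$ and $\alpha\circ(f'g_0)\sim 1_{E_0}$.

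Second, pull $p'$ back along $gf\colon K\to K$, yielding a top map $g_1\colon E_1\to E'$; pullback pasting identifies this $E_1$ with the pullback of $p_0$ along $f$, producing $h_1\colon E_1\to E_0$ with $g_1=g_0\circ h_1$. Since $gf\sim 1_K$, the Lemma applied to $p'$ yields a quasi-inverse $\delta\colon E'\to E_1$ with $g_1\circ\delta\sim 1_{E'}$.

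Setting $\beta\eqdef g_0\circ\alpha\colon E\to E'$, the identity $f'\beta=(f'g_0)\alpha\sim 1_E$ is immediate. For $\beta f'\sim 1_{E'}$ the key computation is
\[ \beta\circ f'\circ g_1 \;=\; g_0\circ(\alpha\circ f'\circ g_0)\circ h_1 \;\sim\; g_0\circ h_1 \;=\; g_1, \]
which together with $g_1\delta\sim 1_{E'}$ and the fact that $\sim$ is an equivalence relation compatible with left and right composition gives the chain $\beta f'\sim\beta f'g_1\delta\sim g_1\delta\sim 1_{E'}$. The main technical obstacle is the bookkeeping of the two pullback pastings (choosing the right fibration to pull back and the right map among $fg$, $gf$, $f$, $g$ at each step) and chaining the contiguity-class identities at the end; both invocations of the Lemma are essential, one to produce $\alpha$ and one to produce $\delta$.
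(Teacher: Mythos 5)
Correct, and essentially the paper's own argument: you use the same two pullback pastings (one over $f\circ g\sim 1_B$, one over $g\circ f\sim 1_K$, noting that composites of pullbacks are pullbacks) and the same two invocations of the preceding Lemma, applied to $p$ and to the fibration $p'$. The only difference is in the final bookkeeping: you assemble an explicit two-sided homotopy inverse $\beta=g_0\circ\alpha$ of $f'$ and verify both composites directly, whereas the paper first shows the intermediate map $E''\to E'$ has left and right homotopy inverses, hence is a strong equivalence, and then transfers this to $f'$ via $f'\circ g'\sim g_0$.
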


\begin{proof}
Suppose $g:B\rightarrow K$ a simplicial map such that $g\circ
f\sim 1_K$ and $f\circ g\sim 1_B.$ Consider first the following
diagram, where $E''$ is the pullback of $p'$ along $g$:
$$\xymatrix{
{E''} \ar[r]^{g'} \ar[d]_{p''} & {E'} \ar[d]^{p'} \ar[r]^{f'} &
{E} \ar[d]^p \\
{B} \ar[r]_g & {K} \ar[r]_f & {B} }$$ As $f\circ g\sim 1_B$, by
the lemma above (note that the composite of pullbacks is a
pullback) one can find simplicial maps $g_0:E''\rightarrow E$ and
$g_1:E\rightarrow E''$ such that $g_1\circ g_0\sim 1_{E''}$,
$g_0\circ g_1\sim 1_E$, $f'\circ g'\sim g_0$ and $f'\circ g'\circ
g_1\sim 1_E$. Therefore, $g'$ has a left homotopy inverse and $f'$
has a right homotopy inverse:
$$(g_1\circ f')\circ g'\sim 1_{E''}\hspace{8pt}\mbox{and}\hspace{8pt}f'\circ (g'\circ g_1)\sim 1_E$$
Similarly, from the diagram of pullbacks
$$\xymatrix{
{E'''} \ar[r]^{f'''} \ar[d]_{p'''} & {E''} \ar[d]^{p''}
\ar[r]^{g'} &
{E'} \ar[d]^{p'} \\
{K} \ar[r]_f & {B} \ar[r]_g & {K} }$$ \noindent we have that $g'$
has a right homotopy inverse. It straightforwardly follows that
$g'$ is a strong equivalence. Since $f'\circ g'\sim g_0$ we
conclude that $f'$ is a strong equivalence.
\end{proof}

%!TEX root = main.tex
%Teorema: Varadarajan: categor'ia de la fibra etc.

\section{Varadarajan's theorem}\label{S8}
As an application of Theorem \ref{PrincipalFibra} about the strong
homotopy type of the fibers of a fibration, we prove in this
section a simplicial version of a well-known result from
Varadarajan for topological fibrations \cite{VARADARAJAN},
establishing a relationship between  the LS-categories of the
total space, the base and the homotopic fiber. A more general
result for smooth foliations was proved by Colman and Mac\'{\i}as
in  \cite{COLMANMACIAS}.

\begin{definition}\label{SLS}
The simplicial LS-category $\scat(K)$ of the simplicial complex
$K$ is the least integer $n\geq 0$ such that $K$ can be covered by
$n+1$ subcomplexes $K_j$ such that each inclusion $\iota_j\colon
K_j\subset K$ belongs to the contiguity class of some constant map
$c_{v_j}\colon K_j \to K$.
\end{definition}

%Recall that the {\em simplicial category} of the simplicial
%complex $K$, denoted by $\scat(K)$, is the minimum number $n\geq
%0$ such that $K$ is the union $K=K_0\cup\dots \cup K_n$ of $n+1$
%{\em categorical} simplicial subcomplexes, where a subcomplex
%$K_j$ is called categorical if the inclusion $K_i\subset K$
%belongs to the contiguity class of some constant map.

This notion is the simplicial version of the well known homotopic
invariant $\cat(X)$, the so-called Lusternik-Schnirelmann category
of the topological space $X$ \cite{CLOT}. It has been introduced
by the authors in \cite{F-M-V,F-M-M-V1}, its most important
property being the invariance by strong homotopy equivalences.

Accordingly to Theorem \ref{PrincipalFibra}, all the fibers
$p\inv(v)$, $v\in B$, of a fibration with connected base $p\colon E \to B$ have the
same strong homotopy type, so we call {\em generic fiber} $F$ of
the fibration any simplicial complex into that equivalence class,
and its simplicial category $\scat(F)$ is well defined.

\begin{theorem}\label{Varadarajan}
Let $p\colon E \to B$ be a simplicial fibration with connected
base $B$ and generic fiber $F$.Then
$$\scat (E)+1\leq (\scat (B) +1)(\scat (F)+1).$$
\end{theorem}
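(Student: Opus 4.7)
The strategy is to adapt the classical proof of Varadarajan's theorem to the simplicial framework. Set $b \eqdef \scat(B)$ and $f \eqdef \scat(F)$; the goal is to produce a cover of $E$ by $(b+1)(f+1)$ subcomplexes, each strongly categorical in $E$.

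First I would cover $B$ by subcomplexes $B_0, \dots, B_b$ such that each inclusion $\iota_i\colon B_i \hookrightarrow B$ lies in the contiguity class of a constant map $c_{v_i}\colon B_i \to B$, and realize each such contiguity class by a simplicial homotopy $H_i\colon B_i \times I_{m_i} \to B$ via Proposition~\ref{MINIANCONT}. Writing $E_i \eqdef p\inv(B_i)$, the fibration property (Definition~\ref{DEF3}) applied to the commutative square with top edge $E_i\times\{0\}\hookrightarrow E$ and bottom edge $H_i\circ (p|_{E_i}\times 1_{I_{m_i}})$ produces a lift $\widetilde H_i\colon E_i\times I_{m_i}\to E$. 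The map $\rho_i \eqdef \widetilde H_i(-,m_i)\colon E_i\to E$ then satisfies $p\circ\rho_i=c_{v_i}$, so it sends $E_i$ into the fiber $F_{v_i}$.

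Next, by Theorem~\ref{PrincipalFibra} and the strong homotopy invariance of $\scat$, one has $\scat(F_{v_i})=f$, so I would cover $F_{v_i}$ by subcomplexes $F_{i,0},\dots,F_{i,f}$ with each inclusion $F_{i,k}\hookrightarrow F_{v_i}$ in the contiguity class of a constant $c_{w_{i,k}}\colon F_{i,k}\to F_{v_i}$. Define $E_{i,k} \eqdef \rho_i\inv(F_{i,k})$, a subcomplex of $E_i$ (and hence of $E$). Every vertex $e\in E$ lies in some $E_i$ (since $\{B_i\}$ covers $B$), and $\rho_i(e)\in F_{v_i}$ lies in some $F_{i,k}$, giving $e\in E_{i,k}$; thus the $(b+1)(f+1)$ subcomplexes $E_{i,k}$ cover $E$.

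Finally I would verify that each $E_{i,k}$ is strongly categorical in $E$. The restriction $\widetilde H_i|_{E_{i,k}\times I_{m_i}}$ is a Minian homotopy between the inclusion $E_{i,k}\hookrightarrow E$ and $\rho_i|_{E_{i,k}}$, placing them in the same contiguity class by Proposition~\ref{MINIANCONT}. Since $\rho_i|_{E_{i,k}}$ factors through $F_{i,k}$, composing the sequence of direct contiguities that witnesses $F_{i,k}\hookrightarrow F_{v_i}\;\ctgcl\; c_{w_{i,k}}$ on the right with $\rho_i|_{E_{i,k}}$ and on the left with the inclusion $F_{v_i}\hookrightarrow E$ shows $\rho_i|_{E_{i,k}}\;\ctgcl\; c_{w_{i,k}}$ as maps $E_{i,k}\to E$. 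Concatenating this sequence with the previous one gives the inclusion $E_{i,k}\hookrightarrow E$ in the contiguity class of the constant $c_{w_{i,k}}$, so $E_{i,k}$ is strongly categorical in $E$ and therefore $\scat(E)+1\leq (b+1)(f+1)$. The delicate points of the argument are the bookkeeping with left and right compositions of contiguity sequences (relying on the fact that direct contiguity is preserved by pre- and post-composition with simplicial maps) and the lifting step supplied by the fibration property of $p$; the rest is essentially the classical covering recipe transcribed into the simplicial language.
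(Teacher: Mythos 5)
Your proof is correct and follows essentially the same route as the paper's: lift the categorical homotopies of the base over the preimages $p\inv(B_i)$ using the fibration property, push each piece into a fiber, and pull back a categorical cover of that fiber to obtain the $(b+1)(f+1)$ categorical subcomplexes covering $E$. The only cosmetic difference is that you keep a separate fiber $F_{v_i}$ for each piece and invoke the strong-homotopy invariance of $\scat$ together with Theorem~\ref{PrincipalFibra}, whereas the paper uses connectedness of $B$ to take all constant maps at a single base vertex and work with one fixed fiber.
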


%\qu{Recordemos que el tipo de homotop'ia fuerte es el dado por
%la relaci'on ``clase de contig\"uidad'' y coincide por el dado
%por Barmak-Minian mediante ``colapsos fuertes''.}

\begin{proof}
Let $\scat B=m$, and take a categorical covering $U_0,\dots,U_m$
of $B$.  From Theorem \ref{PrincipalFibra} we know that all the
fibers have the same strong homotopy type. We identify $F$ to the
fiber $p\inv(v)$ over some base point $v\in B$. Let $\scat F=n$,
with $V_0,\dots,V_n$ a categorical covering of $F$.

For each $i\in\{0,\dots,m\}$ let $I_i\colon U_i\subset B$ be the
inclusion. By definition of simplicial category, the map $I_i$ is
in the contiguity class of a constant map, say $c_i\colon U_i \to
B$. Since $B$   is path connected we can assume that $c_i$ is the
constant map  corresponding to the base point $v$. Consider the
map
$$p_i=I_i\circ p\colon p\inv(U_i)\to B.$$
If  $\epsilon_i\colon p\inv(U_i)\subset E$ is the inclusion, then
$p\circ \epsilon_i=p_i$. Now, from $I_i\sim c_v$ it follows that
$p_i\sim c_v$, the latter being the constant map with domain
$p\inv(U_i)$. By the contiguity lifting property, there exists
$$G_i\colon p\inv(U_i)\to E$$ such that $\epsilon_i \sim G_i$ and $p\circ G_i=c_v$.
The latter means that the  map $G_i$ takes its values in $F$. We
denote
$$g_i\colon p\inv(U_i)\to F$$ the map given by $g_i(v)=G_i(v)$.
In this way $G_i=\iota_F\circ g_i$, where $\iota_F\colon F \subset E$ is the inclusion.

For each $i\in \{0,\dots,m\}$, $j\in\{0,\dots,n\}$, we take the subcomplex
$$W_{ij}=g_i\inv(V_j)\subset p\inv(U_i)\subset E.$$
Since $B=U_0\cup\cdots\cup U_m$ implies
$E=p\inv(U_0)\cup\cdots\cup p\inv(U_m)$, and since
$F=V_0\cup\cdots\cup V_n$ implies $p\inv(U_i)=W_{i0}\cup\cdots\cup
W_{i n}$, it follows that $\{W_{ij}\}$ is a   covering of $E$.

It only remains to prove that each $W_{ij}$ is categorical in $E$.
Let $g_{ij}\colon W_{ij}\to V_j$  be the restriction of $g_i$ to
$W_{ij}\subset p\inv(U_i)$. We know that each $V_j\subset F$ is
categorical, so the inclusion $J_j\colon V_j\subset F$ is in the
contiguity class of some constant map $c_j\colon V_j \to F$, whose
image is some vertex $f_j$ of $F$. Then the composition
$$ W_{ij}\stackrel{g_{ij}}{\rightarrow}V_i\stackrel{J_j}{\subset}F,$$
belongs to the contiguity class of the constant map $f_j\colon
W_{ij}\to F$ since
$$J_j\circ g_{ij}\sim c_j\circ g_{ij}=f_j.$$
Let $\epsilon_{ij}\colon W_{ij}\subset E$ be the inclusion, that
is, the restriction of $\epsilon_i\colon p\inv(V_j)\subset E$ to
$W_{ij}$. Since $\epsilon_i\sim G_i$ it follows that
$\epsilon_{ij}\sim G_{ij}$, the latter being the restriction of
$G_i$ to $W_{ij}$. Finally we have
$$\epsilon_{ij}\sim G_{ij}=\iota_F\circ  J_j \circ g_{ij} = \iota_F\circ f_j ,$$
so the inclusion $W_{ij}\subset E$ is in the contiguity class of a constant map.
\end{proof}

%!TEX root = main.tex

\section{Factorization}\label{FACTORIZ}

In this section we will see that any simplicial map may be
considered, in a homotopical sense, as a simplicial fibration.
%Before establishing the main result, we will use some facts
%concerning the homotopy relation coming from the Moore path
%complex $\PP L$. Recall (see Definition \ref{P-homotop}) that
%given $f,g\colon K\rightarrow L$ simplicial maps, $f$ is said to
%be \emph{P-homotopic} to $g$ if there exists a simplicial map
%$H\colon K\rightarrow \PP L$ such that $\alpha \circ H=f$ and
%$\omega \circ H=g$. Moreover, a notion of P-homotopy equivalence
%was introduced in Definition \ref{Pequiv}.

For our main result in this section we will use the fact that
$(\alpha,\omega )\colon \PP K \to K\times K$ is a simplicial
finite-fibration, for any simplicial complex $K$ (see Theorem
\ref{MAINPATH}).

\begin{theorem}
Let $f\colon K \to L$ be a simplicial map. Then there is a
factorization
$$\xymatrix{
{K} \ar[dr]_j \ar[rr]^f & & {L} \\
& P_f \ar[ur]_p & }$$ \noindent where $j$ is a $P$-homotopy
equivalence and $p$ is a simplicial finite-fibration.
\end{theorem}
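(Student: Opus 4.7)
The plan is to carry out the classical mapping path space construction in the simplicial Moore-path setting of Section \ref{S5}. I define $P_f$ as the pullback
$$\xymatrix{
P_f \ar[r]^{\widetilde{f}} \ar[d]_{p'} & \PP L \ar[d]^{(\alpha,\omega)} \\
K\times L \ar[r]_{f\times 1_L} & L\times L
}$$
and identify its vertices with pairs $(v,\gamma)\in K\times \PP L$ satisfying $f(v)=\alpha(\gamma)$ (the $L$-coordinate $u=\omega(\gamma)$ being determined by $\gamma$). The candidate factorization is $p\eqdef \pr_2\circ p'$, so $p(v,\gamma)=\omega(\gamma)$, together with $j\colon K\to P_f$ given by $j(v)\eqdef (v,c_{f(v)})$ via the constant Moore path at $f(v)$. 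Both maps are simplicial and $p\circ j=f$ is immediate.

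To see that $p$ is a simplicial finite-fibration I combine three facts. By Theorem \ref{MAINPATH}, $(\alpha,\omega)$ is a simplicial finite-fibration. The proof of Proposition \ref{pullback} applies verbatim when the lifting property is restricted to finite test complexes, so $p'$ is a simplicial finite-fibration. Finally, $\pr_2\colon K\times L\to L$ is a simplicial fibration by the corollary at the end of Section \ref{S4}, and composing a simplicial finite-fibration with a simplicial fibration again yields a simplicial finite-fibration, by an argument identical to Proposition \ref{firstprop}(iii).

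For the fact that $j$ is a $P$-homotopy equivalence, I introduce the simplicial retraction $r\colon P_f\to K$, $(v,\gamma)\mapsto v$. Then $r\circ j=1_K$ on the nose, so only $j\circ r\simeq 1_{P_f}$ remains. I construct the required simplicial map $H\colon P_f\to \PP P_f$ by path-truncation:
$$H(v,\gamma)(k)\eqdef (v,\gamma^{[k]}),\qquad \gamma^{[k]}(\ell)\eqdef \gamma(\min(\ell,k)).$$
If $\gamma$ has support $[a,b]$, then $\gamma^{[k]}$ is the constant Moore path at $f(v)$ for $k\leq a$, a left-truncation of $\gamma$ for $a\leq k\leq b$, and equals $\gamma$ for $k\geq b$; in particular $(v,\gamma^{[k]})\in P_f$ and $H(v,\gamma)$ is a Moore path in $P_f$ from $j\circ r(v,\gamma)=j(v)$ to $(v,\gamma)$, so that $\alpha\circ H=j\circ r$ and $\omega\circ H=1_{P_f}$.

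The main obstacle is verifying that $H$ is actually simplicial. Given a simplex $\{(v_i,\gamma_i)\}$ of $P_f$, the requirement that $\{H(v_i,\gamma_i)\}$ be a simplex of $\PP P_f$ reduces, at each $k\in\Z$, to showing that $\{(v_i,\gamma_i^{[k]}),(v_i,\gamma_i^{[k+1]})\}_i$ is a simplex of $P_f$. The $K$- and $L$-coordinates are immediate once one notices that $\omega(\gamma_i^{[k]})=\gamma_i(k)$, by the simplex condition on $\{\gamma_i\}$ at the consecutive integers $k,k+1$. For the $\PP L$-coordinate the key observation is that, for every $\ell\in\Z$, the four indices $\min(\ell,k)$, $\min(\ell+1,k)$, $\min(\ell,k+1)$, $\min(\ell+1,k+1)$ always lie within a single consecutive pair $\{m,m+1\}$ (depending on $\ell$ and $k$), so the values of the four truncations at $\ell$ and $\ell+1$ form a simplex in $L$ again by the simplex condition on $\{\gamma_i\}\subset\PP L$. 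This closes the argument and yields the factorization.
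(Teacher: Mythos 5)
Your proposal is correct and follows essentially the same route as the paper: the same Moore mapping path complex $P_f$ with $j(v)=(v,c_{f(v)})$ and $p(v,\gamma)=\omega(\gamma)$, the finite-fibration property obtained from Theorem \ref{MAINPATH} via pullback stability and composition with $\pr_2$, and the same truncation homotopy $H$ proving $j\circ r\simeq 1_{P_f}$. The only (harmless) differences are organizational: you define $P_f$ directly as the pullback of $(\alpha,\omega)$ along $f\times 1_L$ instead of pulling back $f$ along $\alpha$ and then pasting pullback squares, and you verify simpliciality of $H$ by a direct simplex-by-simplex check with the $\min(\ell,k)$ case analysis rather than through the exponential law.
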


\begin{proof}
We shall denote by $P_f=K\times _L \PP L$ the pullback of $f\colon
K\to L$ along $\alpha \colon \PP L \to L$, where $\alpha$ is the
initial vertex map. In this way we have a commutative square:
$$\xymatrix{
{P_f} \ar[d]_{\alpha '} \ar[r]^{f'} & {\PP L}
\ar[d]^{\alpha } \\
{K} \ar[r]_f & {L} }$$

As we have previously observed, the simplicial complex $P_f$ is
defined as the full subcomplex of $K\times \PP L$ whose set of
vertices is given as
$$P_f=\{(v,\gamma )\in K\times
\PP L\colon f(v)=\alpha (\gamma )\}$$ and $\alpha^\prime,f^\prime$
are the restrictions of the obvious projections. We define the
simplicial map $j\colon K\to P_f$ as $j(v)\eqdef (v,c_{f(v)}),$
where $c_{f(v)}\colon \mathbb{Z}\to L$ denotes the constant Moore path at
$f(v)$. On the other hand, the simplicial map $p\colon P_f\to L$
is defined as $p(v,\gamma )\eqdef \omega (\gamma )$, where
$\omega$ is the final vertex map. Obviously, we have that $p\circ
j=f$.

Let us first check that $p$ is a simplicial fibration. Indeed,
consider the following commutative diagram:
$$\xymatrix{
{P_f} \ar[rr]^{f^\prime} \ar[d]_{(\alpha^\prime, p)} & &
{\PP L} \ar[d]^{(\alpha,\omega)} \\
{K\times L} \ar[rr]^{f\times 1_L} \ar[d]_{\pr_1} & & {L\times L} \ar[d]^{\pr_1} \\
{K} \ar[rr]^f & & {L} }$$

It is not difficult to check that the bottom diagram is a
pullback. As, by construction, the composition diagram is also a
pullback, we obtain that the top diagram must be a pullback. But
then, being $(\alpha ,\omega )$ a simplicial finite-fibration, the
simplicial map $(\alpha^\prime,p)\colon P_f\to K\times L$ must be
also a simplicial finite-fibration. Therefore, the composition of
the fibrations $p=\pr_2\circ (\alpha^\prime,p)\colon P_f\to L$ is
a finite-fibration.

Next we check that $j\colon K\to P_f$ is a $P$-homotopy
equivalence. As $\alpha^\prime\circ j=1_K$ it only remains to see
that $j\circ \alpha^\prime\simeq 1_{P_f}$. We directly define the
homotopy $H\colon P_f\to \PP P_f$ as $H(v,\gamma )(i)\eqdef
(v,\gamma _i)$ where $\gamma _i\in \PP L$ is defined as
$$\gamma _i(j)=\begin{cases}\gamma (j), & j\leq i \\ \gamma (i), & j\geq
i,
\end{cases}$$
for any $\gamma \in \PP L$ and $i\in \mathbb{Z}$. Observe that
$\gamma _i$ is the $i$-truncated Moore path of $\gamma$ and hence,
$\alpha (\gamma _i)=\alpha (\gamma )$ and $\omega (\gamma
_i)=\gamma (i)$, so $(v,\gamma _i)$ is a well defined element in
$P_f$. Moreover, if $(v,\gamma )\in P_f$ is fixed, we are going to
check that $H(v,\gamma )\in \PP P_f$; equivalently, we have to
prove that $H(v,\gamma )\colon \mathbb{Z}\to P_f$ is a simplicial map and a
eventually constant Moore path, that is, for any integer $i\in
\mathbb{Z}$, the image
$$H(v,\gamma )(\{i,i+1\})=\{(v,\gamma _i),(v,\gamma
_{i+1})\}=\{v\}\times \{\gamma _i,\gamma _{i+1}\}$$ \noindent is a
simplex in $P_f$, or equivalently, that $\{\gamma _i,\gamma
_{i+1}\}$ is a simplex in $\PP L$. But this is true, as, for any
integer $j\in \mathbb{Z}$, the set of vertices $$\{\gamma
_i(j),\gamma _i(j+1),\gamma _{i+1}(j),\gamma _{i+1}(j+1)\}$$
\noindent is precisely $\gamma (\{j,j+1\})$ or $\gamma
(\{i,i+1\})$, depending on whether $j\leq i$ or $j\geq i$. Thus
$H$ is a simplicial map. Now, taking into account that
$$H(v,\gamma )(i)=\begin{cases}(v,c_{f(v)}), & i\leq \gamma ^- \\
(v,\gamma ), & i\geq \gamma ^+ \end{cases}$$ \noindent we have
that $H(v,\gamma )\in \PP P_f$. Moreover, $\alpha \circ H=j\circ
\alpha '$ and $\omega \circ H=1_{P_f}$. Therefore, to conclude
that $H$ is a P-homotopy equivalence between $j\circ
\alpha^\prime$ and $1_{P_f}$, we just need to justify that it is a
simplicial map. For this final task, considering the exponential
law and the fact that $P_f$ is a full subcomplex of $K\times
L^{\mathbb{Z}}$, it suffices to check that the following map is simplicial:
$$\Lambda \colon K\times L^{\mathbb{Z}}\times \mathbb{Z}\to
K\times L^{\mathbb{Z}},\hspace{10pt}(\sigma,\gamma,i)\mapsto (\sigma,\gamma
_i).$$ Now if consider $\sigma $ a simplex in $K$, $\{\gamma
_0,\dots,\gamma _p\}$ a simplex in $L^{\mathbb{Z}}$ and $\{i,i+1\}$ a
simplex in $\mathbb{Z}$, then we have to prove that the image
$$\Lambda (\sigma
\times \{\gamma _0,...,\gamma _p\}\times \{i,i+1\}),$$
that is,
$$\sigma \times \{(\gamma _0)_i,...,(\gamma _p)_i,(\gamma
_0)_{i+1},\dots,(\gamma _p)_{i+1}\}$$ is a simplex in $K\times
L^{\mathbb{Z}},$ or equivalently
$$\{(\gamma _0)_i,\dots,(\gamma
_p)_i,(\gamma _0)_{i+1},\dots,(\gamma _p)_{i+1}\}$$ is a simplex
in $L^{\mathbb{Z}}$. So take any integer $j\in \mathbb{Z}$ and consider
\begin{align*}
 &\{(\gamma _0)_i(j),\dots,(\gamma
_p)_i(j),(\gamma _0)_{i+1}(j),\dots,(\gamma _p)_{i+1}(j), \\
&\quad (\gamma
_0)_i(j+1),\dots,(\gamma _p)_i(j+1),(\gamma
_0)_{i+1}(j+1),\dots,(\gamma _p)_{i+1}(j+1)\}\end{align*}
Then, it
is not difficult to check that this set of vertices is precisely
$$\{\gamma _0(j),\dots,\gamma _p(j),\gamma
_0(j+1),\dots,\gamma _p(j+1)\}$$ or $$\{\gamma _0(i),\dots,\gamma
_p(i),\gamma _0(i+1),\dots,\gamma _p(i+1)\},$$ depending on
whether $j\leq i$ or $j\geq i$. But whatever the case is, we have
a simplex in $L$ because $\{\gamma _0,\dots,\gamma _p\}$ is a
simplex in $L^{\mathbb{Z}}$. We conclude the proof of the theorem.
\end{proof}

As a consequence of this theorem new finite-fibrations appear.

\begin{example}
For instance, for any simplicial complex $K$ and a based vertex
$v_0\in K$ one can consider the full subcomplex of $\PP K$
$$\PP_0 K=\{\gamma \in \PP K\colon \alpha (\gamma )=v_0\}.$$
Observe that this simplicial complex is nothing else than the
construction $P_f$ given in the theorem above, where $f\colon
\{v_0\}\hookrightarrow K$ is the inclusion map. Then, the
simplicial finite-fibration associated to $f$ is precisely the map
$$p\colon \PP_0 K \to K,\quad\gamma \mapsto \omega (\gamma).$$
\end{example}

\begin{example}\label{DIAGFACTORS}
There is also a special factorization that we are specially
interested in.  Although it does not come from the general
construction of the above theorem, one can check that the
following diagram gives a factorization of the diagonal map
$\Delta \colon K\to K\times K$ through a $P$-homotopy
equivalence followed by a simplicial finite-fibration
$$\xymatrix{
{K} \ar[dr]_c \ar[rr]^{\Delta} & & {K\times K} \\
& \PP K \ar[ur]_{(\alpha ,\omega )} & }$$ We already know that
$(\alpha ,\omega )$ is a simplicial finite-fibration. On the other
hand, $c\colon K\to \PP K$ is defined as the simplicial map
sending each $v\in K$ to the constant Moore path at $v$,
$c(v)\eqdef c_v$. This map is, indeed, a $P$-homotopy equivalence:
the simplicial map $\alpha \colon \PP K \to K$ satisfies $\alpha
\circ c=1_K$. Moreover, the homotopy $H\colon \PP K\to \PP^2 K$
defined as $H(\gamma )(i)=\gamma _i$, where $\gamma _i$ is the
$i$-truncated Moore path of $\gamma$, satisfies $\alpha \circ
H=c\circ \alpha $ and $\omega \circ H=1_{\PP K};$ that is,
$H\colon c\circ \alpha \simeq 1_{\PP K}$.
\end{example}

 %!TEX root = main.tex

\section{\!\!\v{S}varc genus}\label{SVARCGENUS}

%Categor'ia ``seccional'' $\secat$
%
% -  Resultado: ¿Coinciden la $\secat$ y la $\hsecat$ para nuestras fibraciones?. Hay que definirlas previamente en el caso simplicial.

In the classical topological setting, the Lusternik-Schnirelmann
category can be seen as a particular case of the so-called
``\v{S}varc genus'' or {\em sectional category} of a continuous
map.

In this Section we adapt this definition to the simplicial
setting.

\begin{definition}\label{SVGEN}
The {\em simplicial \v{S}varc genus} of a simplicial map
$\varphi\colon K \to L$ is the minimun integer $n\geq 0$ such that
$L$  is the union $L_0\cup\dots\cup L_n$ of  $n+1$ subcomplexes,
and for each $j$ there exists a section $\sigma_j$ of $\varphi$,
that is, a simplicial map $\sigma_j\colon L_j \to K$ such that
$\varphi\circ \sigma_j$ is the inclusion $\iota_j\colon L_j\subset
L$.
\end{definition}

We denote this genus by $\Sg(\varphi)$. A slight modification in the above definition is
to change the equality by ``being in the same contiguity class''.

\begin{definition}\label{HSG}
The {\em homotopy simplicial \v{S}varc genus} of a simplicial map
$\varphi\colon K \to L$, denoted by $\hSg(\varphi)$, is the
minimum $n\geq 0$ such that $L=L_0\cup\dots\cup L_n$ , and for
each $j\in\{0,\dots,n\}$ there exists an ``up to contiguity
class'' simplicial section $\sigma_j$ of $\varphi$, that is, a
simplicial map $\sigma_j\colon L_j\to K$ such that $\varphi\circ
\sigma_j\sim \iota_j$.
\end{definition}

\begin{remark}\label{SVARCCONP}
Note that when the complex $L$ is finite, the condition
$\varphi\circ \sigma_j \sim \iota_j$ (same contiguity class) can
be changed to $\varphi\circ \sigma_j \simeq\iota_j$ (P-homotopy),
by Proposition \ref{PSAME}.
\end{remark}

Obviously, $\hSg(\varphi)\leq \Sg(\varphi)$. The equality holds
for some particular classes of maps.

\begin{theorem}\label{homotgenus}
Let $p\colon E \to B$ be a simplicial fibration. Then
$\hSg(p)=\Sg(p)$.
\end{theorem}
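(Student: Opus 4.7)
The plan is to show that the inequality $\hSg(p)\leq \Sg(p)$, which is immediate from the definitions (any strict section is a section up to contiguity class), can be reversed when $p$ is a simplicial fibration. The key observation is that the hypothesis $\varphi \circ \sigma_j \sim \iota_j$ in Definition \ref{HSG} is precisely the data required to invoke the contiguity lifting property of Definition \ref{DEF2} (Type II simplicial fibration), which is available thanks to the equivalence of the three definitions of fibration proved in Theorem \ref{equiv}.

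More concretely, suppose $\hSg(p)=n$, so that there is a cover $B=B_0\cup\cdots\cup B_n$ together with simplicial maps $\sigma_j\colon B_j\to E$ satisfying $p\circ \sigma_j\sim \iota_j$, where $\iota_j\colon B_j\subset B$ is the inclusion. Fix $j$. Then I have two simplicial maps $p\circ\sigma_j$ and $\iota_j$ from $B_j$ to $B$ in the same contiguity class with some number $m$ of steps, together with a given lift $\sigma_j$ of $p\circ\sigma_j$ through $p$. Since $p$ is a simplicial fibration, applying Definition \ref{DEF2} to $K\eqdef B_j$, $f\eqdef p\circ\sigma_j$, $g\eqdef \iota_j$ and $\widetilde f\eqdef \sigma_j$ produces a simplicial map $\widetilde\sigma_j\colon B_j\to E$ satisfying $p\circ \widetilde\sigma_j=\iota_j$ (and, as a bonus, $\sigma_j\sim \widetilde\sigma_j$, which will not be needed).

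Now $\widetilde\sigma_j$ is a strict simplicial section of $p$ over $B_j$. Doing this for every $j\in\{0,\dots,n\}$ gives a cover of $B$ by $n+1$ subcomplexes, each equipped with a strict section of $p$, so $\Sg(p)\leq n=\hSg(p)$. Combined with the trivial inequality $\hSg(p)\leq \Sg(p)$, this proves $\hSg(p)=\Sg(p)$.

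There is no real obstacle here beyond correctly identifying which of the three equivalent fibration definitions (Definitions \ref{DEF1}, \ref{DEF2}, \ref{DEF3}) to apply; Definition \ref{DEF2} is tailor-made for this situation because it upgrades a lift of one map to a lift of any map in the same contiguity class, which is exactly what is needed to turn a homotopy section into a strict section.
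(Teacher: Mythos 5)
Your proof is correct and is in substance the paper's own argument: both upgrade each homotopy section $\sigma_j$ to a strict section by applying the contiguity lifting property of $p$ to the pair $p\circ\sigma_j\sim\iota_j$, with $\sigma_j$ as the given lift, and both note that the reverse inequality $\hSg(p)\leq\Sg(p)$ is trivial. The only cosmetic difference is that you invoke the Type II formulation (Definition \ref{DEF2}) as a black box, justified by Theorem \ref{equiv}, whereas the paper unrolls the same step through the Type III definition: it converts the contiguity class into a Minian homotopy $H_j\colon L_j\times[0,m_j]\to B$, lifts it with initial condition $\sigma_j$, and takes the endpoint $\xi_j(v)=\widetilde H_j(v,m_j)$ as the strict section.
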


\begin{proof}
We only have to prove that $\Sg(\varphi)\leq \hSg(\varphi)$, so
let $\hSg(\varphi)=n$ and $L_0,\dots, L_n$ be a covering of $B$ by
subcomplexes such that there exist simplicial maps $\sigma_j\colon
L_j \to E$ with $p\circ \sigma_j \sim \iota_j$, as in Definition
\ref{HSG}. By Theorem \ref{equiv}, for each $j$ we have simplicial
maps
$$H_j\colon L_j \times [0,m_j]\to B$$ with $H_j(-,0)=p\circ \sigma_j$
and $H_j(-,m_j)=\iota_j$ the inclusion $L_j\subset B$. Take a lift
$\tilde H_j\colon L_j\times [0,m_j]\to E $ in the following
diagram,
\begin{center}
$\xymatrix{
\ \ L_j\times \{0\}\ \ar@{^{(}->}[d] \ar[r]^{\ \ \ \ \sigma_j\ } & E\ar[d]^p\\
L_j\times [0,m_j]\ar[r]^{\ \ \ H_j}\ar@{-->}[ru]^{\tilde H_j}&B\\
}$
\end{center}
in such a way that $p\circ \tilde H=H$ and $\tilde
H(v,0)=\sigma_j(v)$. Then the map $\xi_j\colon L_j \to E$ given by
$\xi_j(v)=\tilde H(v,m_j)$ is simplicial and verifies
$$p\circ \xi_j(v)=p\circ \tilde H(v,m_j)=H(v,m_j)=\iota_j(v)=v,$$
so it is a true section of $p$. Then $\Sg(\varphi)\leq n$.
\end{proof}

\begin{remark}\label{FIBFIN} Note that if $p\colon E \to B$ is
a simplicial finite-fibration then Theorem \ref{homotgenus} above
is still true when the base $B$ is finite.
\end{remark}

%Now we consider the diagonal map $\Delta \colon K \to  K\times K$,
%$\Delta(v)=(v,v)$ and its factorization through a fibration given in \ref{}.

%\begin{lemma}If $K$ is a finite simplicial complex, then it is strong
%homotopy equivalent to the Moore path complex $PK$.
%\end{lemma}
%
%\begin{proof}

%Consider the simplicial map $c\colon K \to PK$ sending each vertex $v$
%to the constant path $c_v$. Its inverse from the strong homotopy type
%point of view is $\omega\colon PK\to K$ sending each Moore path $\gamma$
%with support  $[\gamma^-,\gamma^+]$ into its initial  point $\alpha(\gamma)=\gamma(\gamma^+)$.
%
%Clearly $\alpha\circ c=\id_K$. In order to prove that $c\circ \alpha
%\sim \id_{PK}$ we take the largest support $[m,n]$ of all the paths in $PK$.
%This only makes sense if
%\end{proof}

\subsection{Simplicial L-S category}

Taking into account the notion of {\em
simplicial LS-category} (Definition~\ref{SLS}), what we want
to see is that it equals (as in the classical case) the \v{S}varc
genus of a certain fibration.

%\begin{definition}\label{SLS}
%The simplicial LS-category $\scat(K)$ of the simplicial complex
%$K$ is the least integer $n\geq 0$ such that $K$ can be covered by
%$n+1$ subcomplexes $K_j$ such that each inclusion $\iota_j\colon
%K_j\subset K$ belongs to the contiguity class of some constant map
%$c_{v_j}\colon K_j \to K$.
%\end{definition}

Assume that the complex $K$ is connected. Then every two
constant maps $c_{v_i}$, $c_{v_j}$, belong to the same contiguity
class, as can be easily seen by considering a path $\gamma \colon
[0,m] \to K$ connecting $v_i$ and $v_j$ and the sequence of
contiguous maps given by the truncated paths $\gamma_k \colon
[0,k] \to K$. So we can choose a base point $v_0$ and assume that
all the constant maps in Definition \ref{SLS} equal $c_{v_0}$.

Let $P_0K$ be the full subcomplex of the path complex $PK$
consisting on all Moore paths whose initial point is the base
point,
$$P_0K=\{\gamma\in PK\colon \alpha(\gamma)=v_0\}.$$

The map $\omega\colon P_0K \to K$, sending each path to its final
point, is then the pullback of the simplicial finite-fibration
$(\alpha,\omega)\colon PK \to K\times K$ by the map $f$, where
$f(v)=(v_0,v)$:

\begin{center}
$\xymatrix{
\ P_0K\ \ar[d]_{\omega}\ar@{^{(}->}[r]&PK\ar[d]^{(\alpha,\omega)}\\
K\ar[r]^{f}&K\times K\\
}$
\end{center}

\begin{theorem}
Let $K$ be a connected finite complex. Then the simplicial
LS-category $\scat(K)$ equals the \v{S}varc genus of the
simplicial finite-fibration $\omega\colon P_0K \to K$.
\end{theorem}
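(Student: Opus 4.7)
The plan is to prove the two inequalities $\Sg(\omega)\leq \scat(K)$ and $\scat(K)\leq \Sg(\omega)$ separately, exploiting the fact that, by construction, a simplicial map $\sigma\colon L\to P_0K$ is literally the same data as a simplicial map $L\to PK$ whose composition with $\alpha$ is the constant map $c_{v_0}$. Combined with Proposition \ref{PSAME}, which translates between contiguity classes and $P$-homotopies (in both directions, using finiteness of the domain for the non-trivial implication), this makes both inequalities almost immediate. Note that any subcomplex $L_j\subseteq K$ is itself finite since $K$ is assumed to be a finite complex.

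For $\Sg(\omega)\leq \scat(K)$, I would start from a categorical covering $K=L_0\cup\cdots\cup L_n$ with $n=\scat(K)$, arranged (thanks to connectedness of $K$) so that each inclusion $\iota_j\colon L_j\subset K$ belongs to the contiguity class of the common constant map $c_{v_0}$. By Proposition \ref{PSAME}(i), $\iota_j\simeq c_{v_0}$ as $P$-homotopic maps, so there is a simplicial map $\sigma_j\colon L_j\to PK$ with $\alpha\circ\sigma_j=c_{v_0}$ and $\omega\circ\sigma_j=\iota_j$. The first identity says that $\sigma_j(v)$ is a Moore path starting at $v_0$, so $\sigma_j$ factors through $P_0K\subset PK$, and the second identity says that $\sigma_j$ is a strict section of $\omega$ over $L_j$. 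Hence $\Sg(\omega)\leq n$.

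For the reverse inequality $\scat(K)\leq \Sg(\omega)$, I would take a covering $K=L_0\cup\cdots\cup L_n$ with $n=\Sg(\omega)$ together with strict sections $\sigma_j\colon L_j\to P_0K$, i.e. $\omega\circ\sigma_j=\iota_j$. Since every $\sigma_j(v)\in P_0K$ starts at $v_0$, we automatically have $\alpha\circ\sigma_j=c_{v_0}$, so $\sigma_j$ witnesses a $P$-homotopy $c_{v_0}\simeq \iota_j$. Because $L_j$ is finite, Proposition \ref{PSAME}(ii) upgrades this to a contiguity class equality $\iota_j\sim c_{v_0}$, so $\{L_j\}$ is a categorical covering of $K$ and $\scat(K)\leq n$.

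The whole argument is light, and I do not expect a serious obstacle: the pullback description of $\omega$ and Theorem \ref{MAINPATH} are not actually needed for the equality, they only serve to make $\omega$ a natural object on which to evaluate the genus. The only place where a hypothesis is used crucially is the appeal to Proposition \ref{PSAME}(ii), where finiteness of $L_j$ (inherited from finiteness of $K$) is essential to convert a $P$-homotopy into an honest finite-step contiguity sequence; this is the step I would state most carefully.
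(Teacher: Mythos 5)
Your argument is correct, but the forward inequality takes a genuinely different route from the paper. For $\Sg(\omega)\leq\scat(K)$ the paper works with the fibration machinery: it converts $\iota_j\sim c_{v_0}$ into a Minian homotopy $H_j\colon K_j\times[0,m_j]\to K$, lifts it through the finite-fibration $\omega$ (this is where the pullback description of $\omega\colon \PP_0K\to K$ and Theorem \ref{MAINPATH} actually get used), and defines the section as the value of the lift at the endpoint $m_j$. You instead invoke Proposition \ref{PSAME}(i): the contiguity $\iota_j\sim c_{v_0}$ gives a $P$-homotopy, i.e.\ a simplicial map $L_j\to \PP K$ with $\alpha$-composite $c_{v_0}$ and $\omega$-composite $\iota_j$, which lands in the full subcomplex $\PP_0K$ and is already a strict section; no lifting is needed, and your observation that the fibration property of $\omega$ plays no role in the equality itself is accurate (it only justifies the wording of the statement). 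Your reverse inequality is essentially the paper's: the paper re-derives Proposition \ref{PSAME}(ii) inline (choosing a common interval containing all supports, which is where finiteness of $K$ enters), while you simply cite that proposition, with finiteness of each $L_j$ inherited from $K$. The trade-off is that your proof is shorter and localizes the finiteness hypothesis in a single citation, whereas the paper's version keeps the section thematically tied to the homotopy lifting property and shows concretely how sections arise from lifts; both are complete, and your handling of the connectedness normalization to a single constant map $c_{v_0}$ matches the paper's.
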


\begin{proof}
We have to prove that $\scat(K)=\Sg(\omega)$, but we know that the
latter equals $\hSg(\omega)$.

So, first, assume that $\scat(K)=n$ and let $K=K_0\cup\dots\cup
K_n$ such that each inclusion $\iota_j\colon K_j\subset K$ belongs
to the contiguity class of the constant map $v_0$,  that is, there
is a Minian simplicial homotopy $H_j\colon K_j\times [0,m_j] \to K$ with
$H_j(v,0)=v_0$ and $H_j(v,m_j)=v$.

Take a lift $\tilde H_j$ of $H_j$ in the following diagram:

\begin{center}
$\xymatrix{
\ \ K_j\times \{0\}\ \ar@{^{(}->}[d] \ar[r]^{\ \ \ \ c_j\ }& P_0K \ar[d]^\omega \\
K_j\times [0,m_j]\ar[r]^{\ \ \ H_j}\ar@{-->}[ru]^{\tilde H_j}&K\\
}$
\end{center}
where $c_j(v)$ is the constant Moore path at $v_0$, for  all  $v$. Define
$\sigma_j\colon K_j\to P_0K$ as $\sigma_j(v)=\tilde H_j(v,m_j)$. Then
$\omega \circ \sigma_j(v)=H_j(v,m_j)=v$, so $\sigma _j$ is a
section of the fibration. The map $\sigma _j$ is simplicial
since 

$$v\in K_j \mapsto (v,m_j)\in K_j\times [0,m_j]$$

\noindent is a simplicial map. We have then proved that $\Sg(\omega)\leq n$.

Conversely, if $s\colon L \to P_0K$ is a section of $\omega$, each
path $s(v)$ has initial vertex $v_0$ and final vertex $v$. Denote by
$[v^-,v^+]$ the support of $s(v)$. Then, since $K$ is finite,
there is an interval $[m,n]$ containing all the supports, so we
can define $H\colon L\times [m,n] \to K$ as $H(v,i)= s(v)(i)$.
Then $H(v,m)=s(v)(m)=s(v)(v^-)=\alpha(s(v))=v_0$, while
$H(v,m)=s(v)(n)=s(v)(v^+)=\omega(s(v))=v$, showing that the
inclusion $L\subset K$ belongs to the contiguity class of the
constant map $v_0$.

We must check that $H$ is simplicial: if $\{v_0,\dots,v_p\}$ is a
simplex of $L$ and $\{i,i+1\}$ is a simplex in $[m,n]$, the image
by $H$ of $\sigma\times \{i,i+1\}$ is the set
$$\{s(v_0)(i),\dots,s(v_p)(i),s(v_0)(i+1),\dots,s(v_p)(i+1)\},$$
which is a simplex of $K$ because $s(\sigma)$ is a simplex of $P_0K$.

So each covering of $K$ by subcomplexes $L$ verifying the
Definition \ref{SVGEN} gives the same covering verifying
Definition \ref{SLS}. Then $\scat(K)\leq \Sg(\omega)$.
\end{proof}

\subsection{Discrete topological complexity}
In \cite{F-M-M-V2}, a subset of the authors introduced a notion of
{\em discrete topological complexity}  which is a version of
Farber's topological complexity \cite{FARBER}, adapted to the
simplicial setting.

By using well known equivalences in the topological setting, the
simplicial definition avoids the use of any path complex.

\begin{definition}\cite{F-M-M-V2}
The {\em discrete  topological complexity} $\TC(K)$ of the
simplicial complex $K$ is the least integer $n\geq 0$ such that
$K\times K$ can be covered by $n+1$ ``Farber subcomplexes''
$\Omega_j$, where the latter means that there exist simplicial
maps $\sigma_j\colon \Omega_j \to K$ such that $\Delta\circ
\sigma_j$ is in the contiguity class of the inclusion
$\iota_j\colon \Omega_j\subset K\times K$. Here $\Delta :K\to K\times K$ denotes the diagonal map $v\mapsto (v,v)$.
\end{definition}

In other words:

\begin{proposition}\label{FIBFIN2}
The discrete topological complexity of the abstract simplicial
complex $K$ is the homotopic \v{S}varc genus of the diagonal map
$\Delta \colon K \to K \times K$, i.e., $\TC(K)=\hSg(\Delta)$.
\end{proposition}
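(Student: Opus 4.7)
My plan is to prove this by direct comparison of the two definitions, since the statement is essentially a reformulation. First I would specialize Definition~\ref{HSG} of $\hSg$ to the case of the diagonal map $\Delta\colon K \to K\times K$. In that definition the target complex (called $L$ there) becomes $K\times K$, so $\hSg(\Delta)$ unfolds as: the least integer $n\geq 0$ such that there is a covering by subcomplexes $K\times K = L_0\cup\cdots\cup L_n$ together with simplicial maps $\sigma_j\colon L_j\to K$ satisfying $\Delta\circ\sigma_j \sim \iota_j$, where $\iota_j\colon L_j\subset K\times K$ is the inclusion and $\sim$ denotes the contiguity class relation of Definition~\ref{MSTEPS}.

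Next I would recall the definition of $\TC(K)$ just quoted above: the least integer $n\geq 0$ such that $K\times K$ is covered by $n+1$ Farber subcomplexes $\Omega_j$, that is, subcomplexes admitting simplicial maps $\sigma_j\colon \Omega_j\to K$ for which $\Delta\circ\sigma_j$ lies in the same contiguity class as the inclusion $\iota_j\colon \Omega_j\subset K\times K$. Renaming $L_j=\Omega_j$, these two conditions coincide term by term, so each covering witnessing one of the invariants also witnesses the other with the same cardinality $n+1$. Hence $\TC(K)\leq \hSg(\Delta)$ and $\hSg(\Delta)\leq \TC(K)$, giving the equality.

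Because the proof is a purely verbal translation, I do not anticipate a genuine obstacle; the only point worth double-checking is that the symbol $\sim$ in Definition~\ref{HSG} is the same ``same contiguity class'' relation used in the definition of Farber subcomplex. This is guaranteed by Definition~\ref{MSTEPS}, so no further ingredient (such as the factorization $\Delta=(\alpha,\omega)\circ c$ of Example~\ref{DIAGFACTORS}, or Theorem~\ref{homotgenus} equating $\hSg$ with $\Sg$ for fibrations) is needed here, although the latter results would be what one uses if one wanted to replace $\hSg(\Delta)$ by the \v{S}varc genus of an actual simplicial finite-fibration in the spirit of the preceding subsection.
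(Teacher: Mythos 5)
Your proposal is correct and coincides with the paper's treatment: the paper states this proposition immediately after the definition of discrete topological complexity, prefaced only by ``In other words,'' i.e.\ it regards it as the same term-by-term unfolding of Definition~\ref{HSG} for $\varphi=\Delta$ that you carry out. No further ingredient (such as Example~\ref{DIAGFACTORS} or Theorem~\ref{homotgenus}) is needed, exactly as you note.
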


Our main result in this ection is the following one.
\begin{theorem} Let $K$ be a finite complex. The discrete topological complexity of $K$
equals the \v{S}varc genus of the finite-fibration $(\alpha,\omega)\colon PK \to K\times K$.
\end{theorem}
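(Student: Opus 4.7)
The plan is to identify $\TC(K)$ with $\Sg((\alpha,\omega))$ by interposing the homotopic genus $\hSg((\alpha,\omega))$ and combining two facts established earlier: by Proposition~\ref{FIBFIN2}, $\TC(K)=\hSg(\Delta)$; and by Remark~\ref{FIBFIN} applied to the simplicial finite-fibration $(\alpha,\omega)$ with finite base $K\times K$, one has $\hSg((\alpha,\omega))=\Sg((\alpha,\omega))$. Thus the whole argument reduces to proving
$$\hSg(\Delta)=\hSg((\alpha,\omega)),$$
and the key tool will be the factorization $\Delta=(\alpha,\omega)\circ c$ from Example~\ref{DIAGFACTORS}, where $c\colon K\to \PP K$ is the $P$-homotopy equivalence $v\mapsto c_v$ whose $P$-homotopy inverse is $\alpha$.

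The inequality $\hSg((\alpha,\omega))\leq \hSg(\Delta)$ is immediate. Given a covering $\Omega_0,\dots,\Omega_n$ of $K\times K$ with simplicial maps $\sigma_j\colon \Omega_j\to K$ satisfying $\Delta\circ \sigma_j\sim \iota_j$, the composites $\tau_j\eqdef c\circ \sigma_j$ satisfy
$$(\alpha,\omega)\circ \tau_j=(\alpha,\omega)\circ c\circ \sigma_j=\Delta\circ \sigma_j\sim \iota_j,$$
providing a covering of the same size for the fibration $(\alpha,\omega)$.

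The reverse inequality is the main technical step. Given $\tau_j\colon \Omega_j\to \PP K$ with $(\alpha,\omega)\circ \tau_j\sim \iota_j$, I set $\sigma_j\eqdef \alpha\circ \tau_j\colon \Omega_j\to K$, so that $\Delta\circ \sigma_j=(\alpha,\alpha)\circ \tau_j$, and it suffices to exhibit a contiguity $\Delta\circ \sigma_j\sim (\alpha,\omega)\circ \tau_j$, since chaining with the hypothesis yields $\Delta\circ \sigma_j\sim \iota_j$. Since $\Omega_j$ is a subcomplex of the finite complex $K\times K$, it is finite, so, as in the proof of Theorem~\ref{MAINPATH}, the supports of the Moore paths $\tau_j(v)$ are uniformly contained in some interval $[a,b]$; then $\tau_j$ factors through $K^{[a,b]}$ and the exponential adjunction \eqref{biject} produces a simplicial map $\tilde\tau_j\colon \Omega_j\times [a,b]\to K$. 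The candidate homotopy
$$H\colon \Omega_j\times[a,b]\to K\times K,\qquad H(v,i)\eqdef(\alpha(\tau_j(v)),\tilde\tau_j(v,i)),$$
satisfies $H(v,a)=(\alpha(\tau_j(v)),\alpha(\tau_j(v)))=\Delta(\sigma_j(v))$ and $H(v,b)=(\alpha(\tau_j(v)),\omega(\tau_j(v)))=(\alpha,\omega)(\tau_j(v))$. The main (though routine) obstacle is verifying that $H$ is simplicial: on a simplex $\sigma\times \{i,i+1\}$ of $\Omega_j\times[a,b]$, the first coordinate of the image is $\alpha(\tau_j(\sigma))$, a simplex of $K$, and the second coordinate is $\tilde\tau_j(\sigma\times\{i,i+1\})$, also a simplex of $K$, so the image is a simplex of $K\times K$. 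After reparametrizing $[a,b]$ as $I_{b-a}$, Proposition~\ref{MINIANCONT} converts $H$ into the required contiguity $\Delta\circ \sigma_j\sim (\alpha,\omega)\circ \tau_j$, which completes the argument.
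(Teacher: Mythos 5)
Your proposal is correct and follows essentially the same route as the paper: both use $\TC(K)=\hSg(\Delta)$, the factorization $\Delta=(\alpha,\omega)\circ c$ from Example~\ref{DIAGFACTORS} to get $\hSg(\Delta)=\hSg((\alpha,\omega))$, and the finiteness of the base $K\times K$ together with Remark~\ref{FIBFIN} to get $\hSg((\alpha,\omega))=\Sg((\alpha,\omega))$. The only difference is that you spell out explicitly, via the truncated-path homotopy and Proposition~\ref{MINIANCONT}, the middle equality that the paper dispatches by invoking the $P$-equivalence $c$ and Remark~\ref{SVARCCONP}; your explicit homotopy is precisely the truncation homotopy $c\circ\alpha\simeq 1_{\PP K}$ of Example~\ref{DIAGFACTORS} composed with $\tau_j$ and $(\alpha,\omega)$, so it is a welcome elaboration rather than a different argument.
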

\begin{proof}
As was stated in Example \ref{DIAGFACTORS}, there is a
$P$-equivalence $c$ between the complexes $K$ and $PK$, in such a
way that the diagonal factors through the finite-fibration
$(\alpha,\omega)\colon PK \to K \times K$. Since $K \times K$ is
finite, Remark \ref{SVARCCONP} applies and the homotopic \v{S}varc
genus of $\Delta$ and $(\alpha,\omega)$ can be computed by means
of $P$-homotopies. Then it is clear that
$$\TC(K)=\hSg(\Delta)=\hSg((\alpha,\omega))= \Sg((\alpha,\omega)),$$
where the latter equality follows from Proposition \ref{FIBFIN2}.
\end{proof}

 %!TEX root = main.tex

\end{document}